\newcommand{\C}{\mathbb{C}}
\newcommand{\Z}{\mathbb{Z}}
\newcommand{\QQ}{\mathbb{Q}}
\newcommand{\NN}{\mathbb{N}}
\newcommand{\PP}{\mathbb{P}}
\newcommand{\OO}{\mathcal O}
\newcommand{\Ss}{\mathcal S}
\newcommand{\XX}{\mathcal X}
\newcommand{\YY}{\mathcal Y}
\newcommand{\CC}{\mathcal C}
\newcommand{\VV}{\mathcal V}
\newcommand{\WW}{\mathcal W}
\newcommand{\MM}{\mathcal M}
\newcommand{\FF}{\mathcal F}
\newcommand{\EE}{\mathcal E}
\newcommand{\ZZZ}{\mathcal Z}
\newcommand{\hh}{\mathfrak h}
\newcommand{\ttt}{\mathfrak t}
\newcommand{\wt}{\widetilde}
\newcommand{\ima}{\hbox{Im}}
\newcommand{\rom}{\romannumeral}
\newcommand{\ide}{\hbox{id}}
\newtheorem{theorem}{Theorem}[section]
\newtheorem{claim}[theorem]{Claim}
\newtheorem{lemma}[theorem]{Lemma}
\newtheorem{corollary}[theorem]{Corollary}
\newtheorem{proposition}[theorem]{Proposition}
\newtheorem{nonumbering}{Theorem}
\newtheorem{nonumberingc}{Corollary}
\newtheorem{convention}{Conventions}
\theoremstyle{definition}
\newtheorem{remark}[theorem]{Remark}
\newtheorem{definition}[theorem]{Definition}
\newtheorem{notation}[theorem]{Notation}
\newtheorem{nonumberingt}{Acknowledgements}
\begin{document}
\author[Robert Laterveer]
{Robert Laterveer}

\address{Institut de Recherche Math\'ematique Avanc\'ee,
CNRS -- Universit\'e 
de Strasbourg,\
7 Rue Ren\'e Des\-car\-tes, 67084 Strasbourg CEDEX,
FRANCE.}
\email{robert.laterveer@math.unistra.fr}

\title{Algebraic cycles and Verra fourfolds}

\begin{abstract} This note is about the Chow ring of Verra fourfolds. For a general Verra fourfold, we show that the Chow group of homologically trivial $1$-cycles is generated by conics.
We also show that Verra fourfolds admit a multiplicative Chow--K\"unneth decomposition, and draw some consequences for the intersection product in the Chow ring of Verra fourfolds.
\end{abstract}

\keywords{Algebraic cycles, Chow groups, motives, multiplicative Chow--K\"unneth decomposition, Beauville's ``(weak) splitting property'', Verra fourfolds, hyperk\"ahler varieties.}

\subjclass{Primary 14C15. Secondary 14C25, 14C30.}

\maketitle

\section{Introduction}

 {\em Verra fourfolds\/} \cite{Ver} are defined as double covers of $\PP^2(\C)\times\PP^2(\C)$ branched along a smooth divisor of bidegree $(2,2)$.
 
  One could say these varieties live in a world parallel to the world of cubic fourfolds. For example, their Hodge diamond looks much like that of a K3 surface (cf. Corollary \ref{hodgediam}). Also, both cubic fourfolds and Verra fourfolds are related (via an Abel--Jacobi isomorphism) to hyperk\"ahler fourfolds: for the first, this is the famous \cite{BD}; for the second, this is the more recent \cite{IKKR}.
   
 The aim of this note is to explore the analogy 
  \[  \hbox{Verra fourfolds}\ \leftrightsquigarrow    \ \hbox{cubic\ fourfolds} \]
 on the level of Chow groups. A famous result for smooth cubic fourfolds $X$ is that the Chow group of $1$-cycles $A^3(X)$ is generated by lines \cite{Par} (cf. also \cite[Corollary 4.3]{Shen} and \cite[Theorem 1.7]{TZ} for alternative proofs). Here is the analogous statement for Verra fourfolds:
 
 \begin{nonumbering}[=Corollary \ref{conics}] 
  Let $X$ be a general Verra fourfold. The Chow group of homologically trivial $1$-cycles $A^3_{hom}(X)_{\QQ}$ is generated by $(1,1)$-conics (i.e., conics that project to lines via both projections $X\to\PP^2$).
 \end{nonumbering}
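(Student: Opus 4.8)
The plan is to encode $(1,1)$-conics in a cylinder homomorphism and to prove that this homomorphism is surjective onto $A^3_{hom}(X)_\QQ$, the surjectivity being extracted from a decomposition of the diagonal of $X$ compatible with the coniveau of $H^4(X)$. Writing $\pi\colon X\to\PP^2\times\PP^2$ for the double cover and $B\in\vert\OO(2,2)\vert$ for the branch divisor, the ramification formula gives $K_X=\pi^\ast\OO(-2,-2)$, so $X$ is a Fano fourfold; in particular it is rationally connected, whence $A_0(X)_\QQ=\QQ$ and (by Corollary \ref{hodgediam}) $H^{\mathrm{odd}}(X,\QQ)=0$, while $H^4(X)$ carries a Hodge structure of K3 type, so $h^{4,0}=0$ and the transcendental part $H^4_{tr}(X)$ has coniveau $\ge1$. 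Let $F$ parametrise $(1,1)$-conics, with universal conic $P\subset F\times X$ and projections $q\colon P\to F$, $p\colon P\to X$. As for lines on a cubic fourfold, a positive-dimensional family of $(1,1)$-conics passes through a general point of $X$, so $p$ is dominant. Setting $\Phi:=p_\ast\,q^\ast\colon A_0(F)_\QQ\to A_1(X)_\QQ=A^3(X)_\QQ$, the image of the degree-zero part $A_0(F)_{hom}$ is exactly the subgroup generated by differences of $(1,1)$-conics, so the Corollary is equivalent to the \emph{surjectivity} of $\Phi\colon A_0(F)_{hom}\to A^3_{hom}(X)_\QQ$.

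The mechanism for surjectivity is the decomposition of the diagonal. Since $A_0(X)_\QQ=\QQ$, Bloch--Srinivas gives $\Delta_X=X\times x_0+\Gamma$ in $A^4(X\times X)_\QQ$ with $\Gamma$ supported on $D\times X$ for some divisor $D\subset X$; as $(X\times x_0)_\ast$ annihilates $1$-cycles, acting on $z\in A^3_{hom}(X)_\QQ$ yields $z=\Gamma_\ast z$. To see that such $z$ is governed by a surface I would invoke the multiplicative Chow--K\"unneth decomposition $\{\Pi_{2i}\}$ produced in this paper: because $H^{\mathrm{odd}}(X)=0$ and $A_0(X)_\QQ=\QQ$, all but the degree-$4$ Chow--K\"unneth projector act either trivially on $A^3_{hom}(X)_\QQ$ or through manifestly conic-generated cycles, which reduces the claim to the transcendental component $\Pi_4^{tr}$, so that $A^3_{hom}(X)_\QQ=(\Pi_4^{tr})_\ast A^3(X)_\QQ$. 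The transcendental motive $\ttt_4(X):=(X,\Pi_4^{tr},0)$ is of K3 type, and the point is to identify it, \emph{with its action on Chow groups}, with something built from conics.

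This is where the geometry of \cite{IKKR} enters: the Abel--Jacobi isomorphism relating $X$ to its associated hyperk\"ahler fourfold $\MM$ identifies $H^4_{tr}(X)$ with $H^2_{tr}(\MM)$ and is induced by the universal family of $(1,1)$-conics, i.e.\ by a correspondence factoring through $P$. I would upgrade this cohomological statement to an isomorphism $\ttt_4(X)\cong\ttt_2(\MM)(-1)$ of Chow motives realised by $P$; feeding it back through $\Pi_4^{tr}$ then shows that every class $z=(\Pi_4^{tr})_\ast z\in A^3_{hom}(X)_\QQ$ lies in the image of $\Phi$, which is exactly the assertion.

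The main obstacle is precisely this last upgrade, namely the passage from the \emph{cohomological} isomorphism of \cite{IKKR} to an equality of \emph{Chow-theoretic} correspondences. That $(1,1)$-conics sweep out $X$ and that $H^4_{tr}(X)$ is of K3 type are comparatively soft; the real content is that one and the same family $P$ realises the coniveau filtration at the level of rational equivalence, so that the error term $\Gamma$ from Bloch--Srinivas is generated by conics rather than merely supported on a divisor. I expect this to rest on the finite-dimensionality (in Kimura's sense) of $\MM$ together with the multiplicative structure of the Chow--K\"unneth decomposition, which force a cohomological correspondence inducing an isomorphism of transcendental motives to induce the corresponding isomorphism already modulo rational equivalence.
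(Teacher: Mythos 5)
Your overall architecture (cylinder homomorphism $q_\ast p^\ast$ from $0$-cycles on the Hilbert scheme $F$ of $(1,1)$-conics, reduction to the transcendental part, and the need to upgrade the cohomological Abel--Jacobi isomorphism to a Chow-theoretic one) matches the shape of the paper's argument, and you correctly locate the crux in that last upgrade. But the mechanism you propose for the upgrade does not work: you want to deduce it from Kimura finite-dimensionality of the associated hyperk\"ahler fourfold (together with multiplicativity of the CK decomposition). Finite-dimensionality is not known for a general double EPW quartic, nor for a general K3 surface, nor for a general Verra fourfold --- it is a wide-open conjecture in all of these cases; the paper only obtains it for the special Verra fourfolds with $\dim H^{2,2}\ge 20$ (Corollary \ref{other}, via Shioda--Inose). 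So the step ``a correspondence inducing an isomorphism of transcendental motives in cohomology already does so modulo rational equivalence'' has no available input for general $X$, and your proof stalls exactly at the point you identified as the real content. There is also a secondary issue: you invoke the MCK decomposition of the paper to isolate the transcendental component, but in the paper that decomposition is itself downstream of the same engine that proves the corollary, so it cannot be treated as an independent black box here. Finally, a Chow-motivic isomorphism $\ttt^4(X)\cong \ttt^2(Z)(1)$ with the EPW quartic $Z$ is not established anywhere (the paper explicitly leaves the motivic relation between $X$ and $Z$ as an open question); what is proved is a motivic isomorphism with the associated genus $2$ K3 surface.

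The paper's actual mechanism is Voisin's spread technique in the universal family $\XX\to B$ of \emph{all} Verra fourfolds, not finite-dimensionality of a single fibre. One first produces (via Proposition \ref{aj} and Manin's identity principle, then a Hilbert-scheme spreading argument) a \emph{relative} correspondence $\Psi$ such that $\Gamma_q\circ{}^t\Gamma_p\circ\Psi-\Delta_\XX$ kills $H^{3,1}(X_b)$ for very general $b$. The key input is then Theorem \ref{main}: any relative correspondence annihilating $H^{3,1}$ fibrewise acts as zero on $A^\ast_{hom}(X_b)$ for every $b$. Its proof rests on the fact that the ambient space (the cone over $\PP^2\times\PP^2$ minus its vertex) has the Chow--K\"unneth property, so that after subtracting universally defined decomposable cycles (Noether--Lefschetz guarantees the surfaces spanning $H^{2,2}$ come from the ambient space) the correspondence becomes fibrewise completely decomposed, hence acts trivially on $A^\ast_{hom}$. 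This is what replaces the nilpotence you hoped to extract from finite-dimensionality, and it is also why the statement is only asserted for general $X$ (one needs $F$ smooth and the relative set-up of Proposition \ref{spread}). If you want to salvage your write-up, you should replace the finite-dimensionality step by such a spread/decomposition-of-the-diagonal argument over the moduli space.
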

  
 This is proven using (a slight variant on) Voisin's celebrated method of {\em spread\/} of algebraic cycles in families \cite{V0}, \cite{V1}, \cite{V8}, \cite{Vo}, combined with the Abel--Jacobi type isomorphism in cohomology.
 
 Next, we show the following:
 
 \begin{nonumbering}[=Theorem \ref{verramck}] Any Verra fourfold admits a multiplicative Chow--K\"unneth decomposition, in the sense of Shen--Vial \cite{SVfourier}.
 \end{nonumbering}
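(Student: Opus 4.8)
The plan is to follow the now-standard strategy for varieties of cohomological K3 type, in the spirit of Fu--Laterveer--Vial. The first task is to pin down the motivic structure of a Verra fourfold $X$. Since $X$ is a Fano fourfold whose only non-Tate cohomology sits in the middle $H^4(X)$, where it carries a Hodge structure of K3 type (cf. Corollary \ref{hodgediam}), I would produce a self-dual Chow--K\"unneth decomposition together with an isomorphism of Chow motives
\[
 h(X)\ \cong\ \bigoplus_j \mathds{1}(-a_j)\ \oplus\ \ttt(X),
\]
where the Tate summands account for the algebraic cohomology and $\ttt(X)\subset h^4(X)$ is the transcendental part. The crucial point is that all the Tate classes are represented by \emph{explicit} algebraic cycles coming from the double-cover geometry (pullbacks of the two hyperplane classes on $\PP^2\times\PP^2$, their products, and the $(1,1)$-conics of the first theorem). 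Hence the Lefschetz--Tate summands split off unconditionally, the decomposition exists for \emph{every} smooth $X$, and it extends over the universal family $\mathcal X\to B$.

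Next I would reduce multiplicativity to a short list of relations. Writing $\{\pi^i\}$ for the induced (even) projectors and $\delta_X\in A^8(X\times X\times X)_\QQ$ for the class of the small diagonal, the Shen--Vial condition \cite{SVfourier} asks that $\pi^k\circ\delta_X\circ(\pi^i\otimes\pi^j)=0$ whenever $i+j\neq k$. Because every summand other than $\ttt(X)$ is of Tate type, and products of Tate motives respect the grading in the obvious way, all of these relations hold automatically except for the finitely many involving $\ttt(X)$. A Hodge-type bookkeeping shows that the surviving relations say, at the Chow level, that the products $\ttt(X)\cdot(\text{divisor})$ and $\ttt(X)\otimes\ttt(X)$ carry no component outside the graded piece dictated by cohomology: the first product is concentrated in $H^6(X)$ and the second in the top piece $\mathds{1}(-4)$, for pure type reasons. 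Thus the defect cycles measuring the failure of multiplicativity are \emph{homologically trivial}, and the entire problem becomes one of upgrading these homological vanishings to rational equivalence.

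This upgrading is where the Franchetta property enters, and it is the heart of the matter. I would establish that the universal Verra family satisfies the generalized Franchetta conjecture for the relevant fibre self-products $\mathcal X\times_B\mathcal X$ and $\mathcal X\times_B\mathcal X\times_B\mathcal X$: a cycle on such a fibre product whose restriction to every fibre is homologically trivial is already rationally trivial on every fibre. Granting this, the defect cycles above are generically defined (being assembled from the universally-defined projectors and the small diagonal) and fibrewise homologically trivial by the previous paragraph, hence vanish on \emph{every} fibre; this is what yields an MCK for \emph{any} Verra fourfold rather than merely a general one. The Franchetta property itself I would prove by Voisin's spread method \cite{Vo}, \cite{V8}, exactly as for the conics statement above, exploiting the two quadric-surface bundle structures $X\to\PP^2$ and the fact that the variable cohomology is governed by the branch divisor; the Abel--Jacobi isomorphism of \cite{IKKR} serves to identify $\ttt(X)$ with the transcendental motive of the associated hyperk\"ahler fourfold and to keep the K3-type piece under control.

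The main obstacle, as always in this circle of ideas, is the Franchetta step for the triple fibre product: one must control the generically-defined cycles of the relevant codimension on $\mathcal X\times_B\mathcal X\times_B\mathcal X$ well enough to see that the only fibrewise homologically trivial ones are zero. I expect the two quadric-bundle fibrations, combined with the decomposition of the relative motive of a quadric bundle, to reduce this to a Franchetta statement over $\PP^2\times\PP^2$ and over the discriminant locus, which should be tractable; the delicate point is making the spread argument uniform enough to cover all members of the family, and not only the very general one.
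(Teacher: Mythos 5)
Your overall architecture (explicit relative Chow--K\"unneth projectors built from cycles on the ambient cone, reduction of multiplicativity to the fibrewise vanishing of the generically defined, fibrewise homologically trivial cycles $\pi^k\circ\Delta^{sm}\circ(\pi^i\otimes\pi^j)$ for $i+j\neq k$, and a Franchetta-type input to upgrade homological to rational triviality) is exactly the skeleton of the paper's proof. But the heart of the argument is where you diverge, and that is where your proposal has a genuine gap: you propose to prove the generalized Franchetta conjecture for the fibre products $\XX\times_B\XX$ and $\XX\times_B\XX\times_B\XX$ of the universal Verra family itself, and you explicitly flag the triple product as "the main obstacle" without resolving it. No such Franchetta statement for the Verra family is established (nor is it clear how your proposed reduction via the two quadric-bundle structures would go through uniformly over the family); as it stands, the key step of your proof is an unproven conjecture.

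The paper avoids this obstacle entirely by a transfer to K3 surfaces. It first proves (Theorem \ref{verrak3}, via Proposition \ref{univ} and the spread principle) that there is a \emph{universally defined} correspondence exhibiting $\hh(X_b)$ as a direct summand of $\hh(S_b)(1)\oplus\bigoplus\mathds{1}(\ast)$, where $S_b$ is the associated degree $2$ K3 surface -- in particular a split injection with a fibrewise left-inverse. Applying the product arrow $(\Phi,\Phi,\Phi)$ pushes the defect cycles $\Gamma_{ijk}$ into $A^\ast(\Ss^{m/B'})$ for $m\le 3$, where the generalized Franchetta conjecture is a \emph{known} result for degree $2$ K3 surfaces (\cite{FLV}, quoted as Theorem \ref{gfc}); the left-inverse then forces $\Gamma_{ijk}\vert_b=0$ on $X_b^3$, and the extension principle of \cite[Lemma 3.2]{Vo} handles the boundary fibres. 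So the missing ingredient in your write-up is precisely this motivic embedding $\hh(X)\hookrightarrow \hh(S)(1)\oplus\bigoplus\mathds{1}(\ast)$ defined over the base: with it, you never need Franchetta for $\XX^{3/B}$, only for $\Ss^{3/B}$, which is already in the literature. Without it (or without an actual proof of Franchetta for the Verra triple fibre product), the argument does not close.
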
 
 
 For background on the concept of multiplicative Chow--K\"unneth decomposition, cf. Subsection \ref{ss:mck} below. 
  Theorem \ref{verramck} has the following consequence, reminiscent of the structure of the Chow ring of K3 surfaces \cite{BV}:
 
 \begin{nonumberingc}[=Corollary \ref{subring}] Let $X$ be any Verra fourfold. The subalgebra
   \[ R^\ast(X):= \langle A^1(X)_{\QQ}, \ A^2(X)_{\QQ}, \ c_j(T_X)\rangle\ \ \ \subset\ A^\ast(X)_{\QQ}\ \]
   injects into cohomology via the cycle class map. (Here, $c_j(T_X)$ are the Chow-theoretic Chern classes.)
   \end{nonumberingc}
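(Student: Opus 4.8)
The plan is to reduce the statement to a single codimension and then to exploit the multiplicative Chow--K\"unneth decomposition of Theorem \ref{verramck}. First I would record that $X$ is Fano: writing $\pi\colon X\to\PP^2\times\PP^2$ for the double cover, the ramification formula gives $-K_X=\pi^\ast\OO(2,2)$, which is ample since $\pi$ is finite and $\OO(2,2)$ is ample. Hence $X$ is a Fano fourfold, so it is rationally connected, and therefore $A^4_{hom}(X)_\QQ=0$ (the degree map is an isomorphism on $0$-cycles). Consequently the Bloch--Srinivas decomposition of the diagonal applies; combined with the vanishing of the odd cohomology of $X$, so that $H^3(X)=0$ and the intermediate Jacobian $J^3(X)$ is trivial (cf. Corollary \ref{hodgediam}), this yields $A^1_{hom}(X)_\QQ=A^2_{hom}(X)_\QQ=0$. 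Thus in codimensions $1,2,4$ the whole Chow group already injects into cohomology, and a fortiori so does $R^\ast(X)$; the entire content of the Corollary is concentrated in codimension $3$, where $A^3_{hom}(X)_\QQ$ is genuinely nonzero, being generated by $(1,1)$-conics (Corollary \ref{conics}).

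To treat codimension $3$ I would invoke the bigrading
\[ A^i(X)_\QQ=\bigoplus_{j}A^i_{(j)}(X),\qquad A^i_{(j)}(X):=(\pi^X_{2i-j})_\ast A^i(X)_\QQ, \]
supplied by Theorem \ref{verramck}. By construction $A^i_{(j)}(X)\subseteq A^i_{hom}(X)$ for $j\neq0$, and multiplicativity in the sense of Shen--Vial \cite{SVfourier} means $A^i_{(j)}(X)\cdot A^{i'}_{(j')}(X)\subseteq A^{i+i'}_{(j+j')}(X)$. I would then check that the generators all sit in degree $0$: the vanishings $A^1_{hom}(X)_\QQ=A^2_{hom}(X)_\QQ=0$ give $A^1(X)_\QQ=A^1_{(0)}(X)$ and $A^2(X)_\QQ=A^2_{(0)}(X)$, while $c_j(T_X)\in A^j_{(0)}(X)$ holds as usual for this decomposition. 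Any class in $R^3(X)$ is a $\QQ$-linear combination of products of such degree-$0$ generators, so by multiplicativity $R^3(X)\subseteq A^3_{(0)}(X)$.

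It remains to prove that the cycle class map is injective on $A^3_{(0)}(X)$, equivalently that $A^3_{hom}(X)_\QQ\subseteq\bigoplus_{j\ge1}A^3_{(j)}(X)$; this is the main obstacle. The point to establish is that every homologically trivial $1$-cycle carries strictly positive degree for the bigrading. Since $H^5(X)=0$ the intermediate Jacobian $J^5(X)$ vanishes, so no homologically trivial $1$-cycle survives in degree $1$; I would then establish the required inclusion by computing the action of the Chow--K\"unneth projectors of Theorem \ref{verramck} on the $(1,1)$-conics generating $A^3_{hom}(X)_\QQ$ (Corollary \ref{conics}), showing that each such conic class lies in $\bigoplus_{j\ge1}A^3_{(j)}(X)$. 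Granting this, the directness of the decomposition gives $A^3_{(0)}(X)\cap A^3_{hom}(X)_\QQ=0$; an element of $R^3(X)$ that is homologically trivial then lies in this intersection and hence vanishes, which completes the proof.
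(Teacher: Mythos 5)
Your overall skeleton is the same as the paper's: reduce to codimension $3$ (codimensions $1,2,4$ being trivial because $X$ is rationally connected with no odd cohomology), show the generators lie in $A^\ast_{(0)}(X)$, use multiplicativity to get $R^\ast(X)\subset A^\ast_{(0)}(X)$, and finally show $A^3_{(0)}(X)\cap A^3_{hom}(X)=0$. But the two steps that carry the actual content are missing. First, you dismiss $c_3(T_X)\in A^3_{(0)}(X)$ with ``holds as usual for this decomposition''. This is not automatic for an MCK decomposition --- it is an extra property that must be proved (already for a K3 surface the analogous statement $c_2(T_S)\in A^2_{(0)}(S)$ is the Beauville--Voisin theorem, not a formality). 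Concretely, here $A^3_{(0)}(X)=(\pi^6_X)_\ast A^3(X)=\QQ\ell_1\oplus\QQ\ell_2$, so the claim is that $c_3(T_X)$ is rationally equivalent to a combination of the two canonical curve classes. The paper proves this by spreading out over the universal family: for $j\neq 6$ the relative cycle $(\pi^j_\XX)_\ast c_3(T_{\XX/B})$ is fibrewise homologically trivial, its image under the relative correspondence $\Phi$ to the family of associated K3 surfaces is then fibrewise rationally trivial by the generalized Franchetta result (Theorem \ref{gfc}), and injectivity of $(\Phi_b)_\ast$ on $A^3_{hom}(X_b)$ (Theorem \ref{verrak3}) forces the vanishing. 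None of this is present in your argument.

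Second, the final injectivity step is exactly where you stop: you say you ``would establish'' that homologically trivial $1$-cycles have positive grade ``by computing the action of the projectors on the $(1,1)$-conics''. Besides being unperformed, this route is problematic as stated: an individual conic is not homologically trivial, so it cannot lie entirely in $\bigoplus_{j\ge1}A^3_{(j)}(X)$ (those pieces are contained in $A^3_{hom}(X)$); you would at best be able to say something about differences of conics, and you have not said what computation would show even that. The paper's argument is different and much shorter: the projector $\pi^6_X$ (which acts as the identity on $A^3_{(0)}(X)$) is by construction supported on $X\times C$ for $C$ a union of curves, so its action on $A^3_{(0)}(X)\cap A^3_{hom}(X)$ factors through $\bigoplus_j A^0_{hom}(C_j)=0$. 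Without some such support (or Franchetta-type) input, the directness of the decomposition alone does not give you $A^3_{(0)}(X)\cap A^3_{hom}(X)=0$, so as written the proof has a genuine gap at its crucial point.
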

   
  (This can be generalized to self-products $X^m$, cf. Corollary \ref{corm}.)
  The construction of a multiplicative Chow--K\"unneth decomposition relies once more on the spread argument, by first showing (Theorem \ref{verrak3}) that any Verra fourfold can be motivically related to a genus $2$ K3 surface. This close link to K3 surfaces also has the following consequence:
  
 \begin{nonumberingc}[=Corollary \ref{isog}] Let $X,X^\prime$ be two Verra fourfolds that are isogenous (i.e., there exists an isomorphism of $\QQ$-vector spaces $H^4(X,\QQ)\cong H^4(X^\prime,\QQ)$ compatible with the Hodge structures and with cup product). Then there is an isomorphism of Chow motives
    \[ \hh(X)\ \xrightarrow{\cong}\ \hh(X^\prime)\ \ \ \hbox{in}\ \MM_{\rm rat}\ .\]
    \end{nonumberingc}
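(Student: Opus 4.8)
The plan is to reduce the assertion to the analogous---and known---statement for K3 surfaces. Recall from Theorem \ref{verrak3} that any Verra fourfold $X$ comes with a genus $2$ K3 surface $S$ together with an isomorphism of Chow motives identifying the transcendental summand $\ttt(X)$ of $\hh^4(X)$ with $\ttt_2(S)(-1)$, where $\ttt_2(S)$ denotes the transcendental motive of $S$. Realised in cohomology (the Abel--Jacobi isomorphism of \cite{IKKR}), this gives an isomorphism of Hodge structures $H^4_{tr}(X,\QQ)\cong H^2_{tr}(S,\QQ)(-1)$, and likewise $H^4_{tr}(X^\prime,\QQ)\cong H^2_{tr}(S^\prime,\QQ)(-1)$ for the K3 surface $S^\prime$ attached to $X^\prime$.

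First I would use the hypothesis. The given isomorphism $\psi\colon H^4(X,\QQ)\xrightarrow{\cong}H^4(X^\prime,\QQ)$ is compatible with the Hodge structures and with cup product; as cup product lands in $H^8\cong\QQ$, this makes $\psi$ a Hodge isometry for the intersection forms. Hence $\psi$ respects the orthogonal splitting $H^4=H^4_{alg}\perp H^4_{tr}$ into the span of the $(2,2)$-classes and its complement, and restricts to a Hodge isometry $H^4_{tr}(X,\QQ)\cong H^4_{tr}(X^\prime,\QQ)$. Transporting this through the two Abel--Jacobi isomorphisms above produces a Hodge isometry $H^2_{tr}(S,\QQ)\cong H^2_{tr}(S^\prime,\QQ)$; that is, the K3 surfaces $S$ and $S^\prime$ are isogenous.

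Next I would invoke the theorem of Huybrechts \cite{Huy} that isogenous K3 surfaces have isomorphic rational Chow motives, giving $\hh(S)\cong\hh(S^\prime)$ and in particular $\ttt_2(S)\cong\ttt_2(S^\prime)$ in $\MM_{\rm rat}$. Twisting by $\LL$ and combining with Theorem \ref{verrak3} yields
  \[ \ttt(X)\ \cong\ \ttt_2(S)(-1)\ \cong\ \ttt_2(S^\prime)(-1)\ \cong\ \ttt(X^\prime)\ \ \ \hbox{in}\ \MM_{\rm rat}\ . \]
It remains to match the complementary pieces. Since $X$ has a K3-like Hodge diamond (Corollary \ref{hodgediam})---its odd cohomology vanishes and $H^2,H^6$ consist of Hodge classes---the decomposition underlying Theorem \ref{verrak3} writes $\hh(X)=\ttt(X)\oplus M$ with $M$ a direct sum of Lefschetz motives $\LL^i$, the multiplicities being the relevant Betti numbers of $X$. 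An isogeny preserves these Betti numbers, so the complementary summand $M^\prime$ of $X^\prime$ is an abstractly isomorphic sum of Lefschetz motives. Combining $\ttt(X)\cong\ttt(X^\prime)$ with $M\cong M^\prime$ gives the desired isomorphism $\hh(X)\xrightarrow{\cong}\hh(X^\prime)$.

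The principal obstacle lies in this transfer step: I must ensure that Theorem \ref{verrak3} provides not merely an abstract isomorphism of transcendental motives, but one intertwining the cup-product pairings up to a nonzero scalar, so that a Hodge isometry between the Verra fourfolds descends to a Hodge isometry---and not just an isomorphism of Hodge structures---between the associated K3 surfaces. Checking that the Abel--Jacobi correspondence is a similitude from $(H^4_{tr}(X),\cup)$ to $(H^2_{tr}(S),\cup)(-1)$ is precisely what legitimises the appeal to Huybrechts' theorem; the remaining bookkeeping with Lefschetz motives is purely formal.
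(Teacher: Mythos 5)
Your proposal is correct and follows essentially the same route as the paper: reduce to a Hodge isometry between the transcendental lattices of the associated K3 surfaces (using that the motivic isomorphism of Theorem \ref{verrak3} respects cup product up to a universal nonzero scalar, cf.\ Remark \ref{later}), apply Huybrechts' theorem on isogenous K3 surfaces, and transport back via Theorem \ref{verrak3}. Your explicit matching of the algebraic/Lefschetz summands is a detail the paper leaves implicit, but it is the same argument.
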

  
 Further consequences concern a multiplicative decomposition in the derived category (Corollary \ref{deldec}), 
 the generalized Hodge conjecture for self-products, and finite-dimensionality of the motive of certain special Verra fourfolds (Corollary \ref{other}).

\vskip0.6cm

\begin{convention} In this note, the word {\sl variety\/} will refer to a reduced irreducible scheme of finite type over $\C$. For any $n$-dimensional variety $X$, we will 
write $A_i(X)$ for the Chow group of dimension $i$ cycles on $X$ with $\QQ$-coefficients, and $A^j(X)$ for the operational Chow cohomology of \cite{F} with $\QQ$-coefficients
(this can be identified with $A_{n-j}(X)$ for smooth $X$).

The notations 
$A^j_{hom}(X)$ and $A^j_{AJ}(X)$ will indicate the subgroups of 
homologically trivial (resp. Abel--Jacobi trivial) cycles.

For a morphism between smooth varieties $f\colon X\to Y$, we will write $\Gamma_f\in A^\ast(X\times Y)$ for the graph of $f$, and ${}^t \Gamma_f\in A^\ast(Y\times X)$ for the transpose correspondence.

The contravariant category of Chow motives (i.e., pure motives with respect to rational equivalence as in \cite{Sc}, \cite{MNP}) will be denoted $\MM_{\rm rat}$. 

We will write $H^\ast(X)=H^\ast(X,\QQ)$ for singular cohomology with $\QQ$-coefficients.
\end{convention}

\section{Verra fourfolds}

\begin{definition}[\cite{Ver}] A {\em Verra fourfold\/} is a double cover of $\PP^2\times\PP^2$ branched along a smooth divisor of bidegree $(2,2)$.
\end{definition}

\begin{remark} The moduli space of Verra fourfolds is $19$-dimensional \cite[Section 0.3]{IKKR}.
\end{remark}

\begin{lemma}\label{fano} A Verra fourfold $X$ is a Fano variety (i.e., the canonical bundle is anti-ample). In particular, $A^4(X)=\QQ$ and the Hodge conjecture is true for $X$.
\end{lemma}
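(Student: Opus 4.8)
The plan is to compute the canonical bundle of $X$ by the standard double-cover formula and then extract the two ``in particular'' statements. Write $Y=\PP^2\times\PP^2$ and let $\pi\colon X\to Y$ be the covering, with branch divisor $B\in |\OO_Y(2,2)|$. Since $B$ has bidegree $(2,2)$ we may write $B=2L$ with $L=\OO_Y(1,1)$; the ramification divisor $R\subset X$ then satisfies $\OO_X(R)\cong\pi^\ast L$ and $K_X=\pi^\ast K_Y+R$, whence
\[ K_X=\pi^\ast\bigl(K_Y\otimes L\bigr)=\pi^\ast\bigl(\OO_Y(-3,-3)\otimes\OO_Y(1,1)\bigr)=\pi^\ast\OO_Y(-2,-2). \]
As $\OO_Y(2,2)$ is ample and $\pi$ is finite, the pullback $-K_X=\pi^\ast\OO_Y(2,2)$ is again ample, so $X$ is Fano. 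I expect no difficulty in this step; it is the core of the statement.

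For $A^4(X)=\QQ$: a Fano variety is rationally connected (Campana, Koll\'ar--Miyaoka--Mori), and on a smooth projective rationally connected variety all points are rationally equivalent, so $A_0(X)=\QQ$. Since $\dim X=4$ this reads $A^4(X)=\QQ$.

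Finally, the Hodge conjecture. Outside the middle degree it is automatic for a fourfold: $H^2$ is Lefschetz $(1,1)$, $H^0$ and $H^8$ are trivial, and $H^6$ follows from $H^2$ through the hard Lefschetz isomorphism $L^2\colon H^2\xrightarrow{\sim}H^6$, which is induced by the algebraic class $h^2$ and carries Hodge classes bijectively onto Hodge classes. The only real case is $H^4$. Here the classes pulled back from $Y$, spanned by $\pi^\ast h_1^2,\pi^\ast h_1h_2,\pi^\ast h_2^2$ with $h_1,h_2$ the two hyperplane classes, give the obviously algebraic part, while its orthogonal complement $T$ is, after a Tate twist, a Hodge structure of K3 type with $h^{3,1}=1$. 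For very general $X$ this $T$ is irreducible and hence contains no nonzero rational $(2,2)$-class, so every Hodge class is already algebraic. To treat an arbitrary $X$ I would transport algebraicity from the associated hyperk\"ahler fourfold $F$ of \cite{IKKR}: the Abel--Jacobi correspondence identifies $T$ with a Tate twist of $H^2_{prim}(F)$, where the Hodge conjecture is just the Lefschetz $(1,1)$-theorem, and pushing the resulting algebraic classes back along this correspondence makes them algebraic on $X$. The main obstacle is precisely this last point: one must know the Abel--Jacobi identification is realised by an honest algebraic cycle, so that it preserves algebraicity and not merely the Hodge structure.
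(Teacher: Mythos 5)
Your computation of the canonical bundle and the deduction of $A^4(X)=\QQ$ from rational connectedness are correct and coincide with the paper's argument (the paper just says ``the canonical bundle formula'' and cites Campana and Koll\'ar--Miyaoka--Mori; your explicit $K_X=\pi^\ast\OO_Y(-2,-2)$ is exactly what is meant).

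The Hodge conjecture step, however, has a genuine gap, and you have correctly located it yourself: your argument only works unconditionally for very general $X$ (where the transcendental part $T$ is irreducible), and the proposed extension to arbitrary $X$ via the hyperk\"ahler fourfold of \cite{IKKR} requires the Abel--Jacobi identification to be induced by an algebraic correspondence. That fact is not available here: in the paper it is only established later (Proposition \ref{aj}), only for \emph{general} $X$ (one needs the double EPW quartic $Z$ to be smooth and the $\PP^1$-fibration $F\to Z$ to exist), and its proof in turn feeds into results that use the present lemma, so leaning on it would risk circularity. The paper's route is both simpler and complete: since $A^4(X)=A_0(X)=\QQ$, the Bloch--Srinivas decomposition of the diagonal \cite{BS} applies, and one of its standard consequences is that for a smooth projective fourfold with $CH_0$ supported on a point the Hodge conjecture holds for codimension-$2$ classes; the degrees $j\neq 4$ are handled exactly as you say (Lefschetz $(1,1)$ and hard Lefschetz). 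So the second ``in particular'' is a direct consequence of the first, with no need to invoke the hyperk\"ahler side at all. Replacing your last paragraph by the Bloch--Srinivas argument closes the gap.
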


\begin{proof} The canonical bundle formula shows that $X$ is Fano. Fano varieties are rationally connected \cite{Ca}, \cite{Kol}, and so $A^4(X)=\QQ$. The Hodge conjecture is known for fourfolds with trivial Chow group of $0$-cycles \cite{BS}.
\end{proof}

Just as for cubic fourfolds \cite{BD}, there is an Abel--Jacobi isomorphism relating the cohomology of a general Verra fourfold with that of a certain hyperk\"ahler fourfold:

\begin{theorem}[\cite{IKKR}, \cite{KKM}]\label{aj0} Let $X\to \PP^2\times\PP^2$ be a Verra fourfold. There exists a variety $Z$ (the associated double EPW quartic). Assume $Z$ is smooth (which is the case for general $X$). Then $Z$ is a hyperk\"ahler fourfold of K3$^{[2]}$-type, and
there is an isomorphism of $\QQ$-vector spaces
  \[ f_{\rm KKM}\colon\ \   H^4_{tr}(X,\QQ)\ \xrightarrow{\cong}\ H^2_{tr}(Z,\QQ)(1)\ ,\]
  compatible with cup product up to a coefficient.
   (Here $H_{tr}^\ast()$ denotes transcendental cohomology, i.e. the orthogonal complement of the subspace of algebraic classes.)
\end{theorem}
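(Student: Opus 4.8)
The plan is to build $Z$ explicitly as an Eisenbud--Popescu--Walter (EPW) type degeneracy locus attached to the branch data, to establish that it is hyperk\"ahler of K3$^{[2]}$-type by a deformation argument, and finally to realize $f_{\rm KKM}$ as the action of an explicit algebraic correspondence, proving bijectivity by a Hodge-theoretic argument concentrated on the one-dimensional pieces $H^{3,1}(X)$ and $H^{2,0}(Z)$. First I would set up the underlying linear algebra. Writing $\PP^2\times\PP^2=\PP(V)\times\PP(W)$ with $\dim V=\dim W=3$, a defining section of the branch divisor lies in $H^0(\OO(2,2))=\operatorname{Sym}^2 V^\ast\otimes \operatorname{Sym}^2 W^\ast$; following \cite{IKKR} this datum is repackaged as a Lagrangian subspace inside a symplectic space manufactured from $V$ and $W$, and $Z$ is defined as the natural double cover of the locus where this Lagrangian meets a distinguished family of subspaces non-transversally. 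For a general branch divisor the deeper degeneracy strata have the expected codimension, which yields the smoothness of $Z$ asserted in the statement.

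Next I would show $Z$ is of K3$^{[2]}$-type. The construction varies in an algebraic family over the (connected) parameter space of $(2,2)$-divisors, and the general analysis of smooth Lagrangian degeneracy loci equips the smooth members with a holomorphic symplectic form, so each such $Z$ is an irreducible holomorphic symplectic fourfold. Since deformation type is constant in a smooth connected family, it then suffices to identify one member of the family with a variety already known to be of K3$^{[2]}$-type --- for instance by specializing the Lagrangian to O'Grady's double EPW sextic situation, or to a moduli space of sheaves on a K3 surface. This reduces the global statement to a single explicit degeneration.

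The crux is the third step: producing the correspondence that induces $f_{\rm KKM}$ and proving it is an isomorphism. Points of $Z$ should parametrize algebraic cycles on $X$ --- in view of Corollary \ref{conics}, the natural candidates are the $(1,1)$-conics --- and the universal family of these cycles furnishes an incidence correspondence $\Gamma\subset Z\times X$. As any correspondence, $\Gamma_\ast$ automatically respects Hodge structures, and by construction it shifts Hodge type by $(-1,-1)$, hence sends $H^{3,1}(X)$ into $H^{2,0}(Z)$; thus it induces a morphism of Hodge structures $H^4_{tr}(X)\to H^2_{tr}(Z)(1)$. To see this is bijective I would first check that the two transcendental spaces have equal rank, which is a numerical consequence of the Hodge numbers recorded in Corollary \ref{hodgediam}, and then verify that $\Gamma_\ast$ does not annihilate the generator of the one-dimensional space $H^{3,1}(X)$; the latter non-vanishing is an explicit residue computation on the incidence variety. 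Because the transcendental Hodge structure of a general $X$ is irreducible, a nonzero morphism out of it is injective, and injectivity together with the rank equality forces $f_{\rm KKM}$ to be an isomorphism.

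I expect this last non-vanishing --- exhibiting the correct cycles and showing the induced map is nonzero on holomorphic forms --- to be the main obstacle; everything else is either formal or a deformation argument. Finally, the compatibility with cup product ``up to a coefficient'' follows once bijectivity is known: both $H^4_{tr}(X)$ and $H^2_{tr}(Z)(1)$ carry, up to scaling, a unique intersection form preserved by their Hodge isometries, so an isomorphism of these irreducible Hodge structures necessarily matches the two pairings up to a rational multiple, which is exactly the asserted compatibility.
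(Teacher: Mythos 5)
Your route is genuinely different from the paper's. The paper proves Theorem \ref{aj0} by pure citation-composition through the \emph{associated K3 surface} $S$: the quadric fibration $X\to\PP^2$ has a sextic discriminant curve, the double cover of $\PP^2$ branched there is a K3 surface $S$, Laszlo's theorem \cite{Las} gives a Hodge isometry $H^4_{prim}(X)\cong H^2_{prim}(S)(1)$, the identification $Z\cong M_\nu(S,\alpha)$ of \cite{IKKR} together with Yoshioka's theorem \cite{Yos} gives $H^2_{tr}(Z)\cong H^2_{tr}(S)$, and $f_{\rm KKM}$ is defined as the composition; compatibility with cup product is inherited from the two cited isometries, for \emph{every} $X$ with $Z$ smooth. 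Your proposal instead builds a geometric correspondence out of the $(1,1)$-conics and proves bijectivity by simplicity of the transcendental Hodge structure. That is precisely what the paper does separately in Proposition \ref{aj} -- and it does so \emph{because} the cited $f_{\rm KKM}$ (which passes through Laszlo's Leray spectral sequence argument) is not visibly correspondence-induced. So your approach, if completed, would buy more than the statement asks for (a correspondence-induced isomorphism), at the cost of redoing work that the theorem, as attributed to \cite{IKKR} and \cite{KKM}, is entitled to import.

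Two concrete repairs are needed before your version closes. First, the incidence correspondence ``$\Gamma\subset Z\times X$'' does not have the right dimension as you describe it: by Theorem \ref{FZ} the Hilbert scheme $F=F_{(1,1)}(X)$ of $(1,1)$-conics is $5$-dimensional and maps to the $4$-fold $Z$ by a $\PP^1$-fibration $\sigma$, so a point of $Z$ parametrizes a $\PP^1$-family of conics sweeping out a surface in $X$; the naive incidence cycle in $X\times Z$ is $6$-dimensional, i.e.\ lies in $A^2(X\times Z)$, and induces $H^4(X)\to H^0(Z)$ rather than $H^4(X)\to H^2(Z)$. One must interpose a relatively ample class $\xi$ for $\sigma$, as in Proposition \ref{aj}(iii), to get a cycle in $A^3(X\times Z)$. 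Second, the non-vanishing on $H^{3,1}(X)$ that you defer to ``a residue computation'' is the real content; the paper's Proposition \ref{aj}(i) carries it out by adapting \cite[Proposition 6]{BD} (cut $F$ to a threefold $F'$, use the projective bundle structure of the universal conic over $F'$ and a Chern class computation to show $(p')_\ast(q')^\ast$ preserves the intersection form up to a nonzero factor). Finally, your uniqueness-of-polarization argument for the cup-product compatibility is valid only for very general $X$, where $H^4_{tr}(X)$ is simple with endomorphism algebra $\QQ$; to get the statement for all $X$ with $Z$ smooth you need either a specialization argument (as in Proposition \ref{aj}(iii)) or the paper's route, where the compatibility holds fibrewise because it holds for $f_{\rm Lasz}$ and $f_{\rm Yosh}$ individually.
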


\begin{proof} One way to construct the double EPW quartic $Z$ is in terms of $(1,1)$-conics on $X$, cf. Theorem \ref{FZ} below. The isomorphism $f_{\rm KKM}$ is constructed in \cite[Section 3.1]{KKM}. The gist of the argument is to use the associated K3 surface $S$. More precisely, the projection to any of the two factors $\PP^2$ is a quadric fibration $X\to\PP^2$. The discriminant locus of any of these two fibrations is a sextic curve $C\subset\PP^2$ which has at most simple double points, and which is smooth for the generic $X$ \cite[section 1]{Beau2}. The double cover of $\PP^2$ branched over $C$ is a K3 surface $S$ with at most rational double points, and which is smooth for generic $X$. (NB: in the course of this note, we will use ``K3 surface'' to mean a surface with at most rational double points, for which the minimal desingularization is a smooth K3 surface.)

Laszlo has constructed \cite[Proposition II.3.1]{Las} an isomorphism of Hodge structures
  \begin{equation}\label{flasz} f_{\rm Lasz}\colon\ \  H^4_{prim}(X,\QQ)\ \xrightarrow{\cong}\ H^2_{prim}(S,\QQ)(1) \ ,\end{equation}
  which implies in particular that there is an isomorphism
  \[  f_{\rm Lasz}\colon\ \  H^4_{tr}(X,\QQ)\ \xrightarrow{\cong}\ H^2_{tr}(S,\QQ)(1) \ .\]
  On the other hand, it is known \cite[Proposition 4.1]{IKKR} that the double EPW quartic $Z$ associated to $X$ is isomorphic to
the moduli space of twisted stable sheaves on the $K3$ surface $S$ (such moduli spaces are constructed in \cite{Yos}):
  \[ Z\cong  M_{\nu}(S,\alpha) \ .\]
 Here $\nu$ is a Mukai vector, and $\alpha$ is a certain Brauer class. 
Building on work of Mukai, Yoshioka has constructed \cite[Theorem 3.19]{Yos} an isomorphism of Hodge structures
 \[ f_{\rm Yosh}\colon\ \ H^2(Z,\QQ)\ \xrightarrow{\cong}\  \nu^\bot\ \]
 (here the right-hand side denotes the orthogonal to $\nu$ inside the Mukai lattice $\wt{H}^\ast(S,\QQ)$). Taking the transcendental part on both sides, this induces in particular an isomorphism
  \begin{equation}\label{fyosh} f_{\rm Yosh}\colon\ \  H^2_{tr}(Z,\QQ)\ \xrightarrow{\cong}\ H^2_{tr}(S,\QQ)\ .\end{equation}
  The required isomorphism is now defined as a composition:
  \[ f_{\rm KKM}:= (f_{\rm Yosh})^{-1}\circ f_{\rm Lasz}\colon\ \  H^4_{tr}(X,\QQ)\ \xrightarrow{\cong}\ H^2_{tr}(Z,\QQ)\ .\]
  
  The compatibility with cup product follows from the fact that both $f_{\rm Lasz}$ and $f_{\rm Yosh}$ are compatible with bilinear forms up to a coefficient (cf. \cite[p. 251]{Las}, resp. \cite[Theorem 3.19]{Yos}). \end{proof}

\begin{corollary}\label{hodgediam} Let $X$ be a Verra fourfold. The Hodge diamond of $X$ is
    \[ \begin{array}[c]{ccccccccccccc}
      &&&&&& 1 &&&&&&\\
      &&&&&0&&0&&&&&\\
      &&&&0&&2&&0&&&&\\
        &&&0&&0&&0&&0&&&\\
      &&0&&1&&21&&1&&0&&\\
      &&&0&&0&&0&&0&&&\\
       &&&&0&&2&&0&&&&\\
        &&&&&0&&0&&&&&\\      
        &&&&&& 1 &&&&&&\\
\end{array}\]
 \end{corollary}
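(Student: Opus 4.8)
The plan is to read off the Hodge numbers $h^{p,q}(X)=\dim H^q(X,\Omega^p_X)$ row by row from three inputs: the rational connectedness of $X$ (Lemma~\ref{fano}), the quadric-surface fibration attached to a projection $X\to\PP^2$, and the Laszlo isomorphism \eqref{flasz}. Since Hodge numbers are locally constant in the smooth family of all Verra fourfolds, it suffices to treat a general $X$; Poincaré duality then halves the work, as it is enough to determine $H^k(X)$ for $k\le 4$.

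First I would dispose of the outer entries of the diamond. Being Fano, $X$ is rationally connected, so $H^q(X,\OO_X)=0$ for every $q>0$; by Hodge symmetry and complex conjugation this forces $h^{p,0}(X)=h^{0,q}(X)=0$ for all $p,q\ge 1$. This gives every edge zero of the diamond at once, and in particular $b_1(X)=h^{1,0}+h^{0,1}=0$ together with $h^{4,0}=h^{3,0}=h^{2,0}=0$.

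The crux is the vanishing of $H^3(X)$ (hence of $H^5(X)$, by duality) and the exact value $b_2(X)=2$. Here I would use one of the two quadric-surface fibrations $p\colon X\to\PP^2$ underlying Theorem~\ref{aj0}: its smooth fibres are quadrics $\cong\PP^1\times\PP^1$, and its discriminant is the smooth sextic $C\subset\PP^2$ whose double cover is the genus~$2$ K3 surface $S$. Decomposing $Rp_\ast\QQ$ via the decomposition theorem, the constant pieces ($R^0p_\ast\QQ$, $R^4p_\ast\QQ$, and the monodromy-invariant rank-one part of $R^2p_\ast\QQ$) contribute only algebraic classes in even degree, yielding $b_2(X)=1+1=2$ with both classes of type $(1,1)$ (the pullbacks of the two hyperplane classes of $\PP^2\times\PP^2$). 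The remaining, ``variable'' part of $R^2p_\ast\QQ$ is the rank-one local system on $\PP^2\setminus C$ cut out by the double cover $S\to\PP^2$, and its intersection cohomology is governed by the anti-invariant cohomology $H^\ast(S)^-$ of the K3 surface; since $H^{\mathrm{odd}}(S)=0$, the variable part contributes nothing in odd degree, so $H^1(X)=H^3(X)=0$. (Alternatively these vanishings follow from the cyclic-cover formula, which expresses the anti-invariant Hodge numbers of $X$ as $h^q(\PP^2\times\PP^2,\Omega^p(\log B)\otimes\OO(-1,-1))$ for the branch divisor $B$, together with Bott-type vanishing; but the fibration argument is cleaner.) I expect this to be the main obstacle, since rational connectedness kills only the $(3,0)$ and $(0,3)$ corners of $H^3$, and ruling out the potential $(2,1)$ and $(1,2)$ classes genuinely requires the link to $S$ and the vanishing $H^1(S)=H^3(S)=0$.

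It remains to pin down the middle row. By the Lefschetz decomposition with respect to a polarization $h$ one has $H^4(X)=H^4_{prim}(X)\oplus h\cdot H^2_{prim}(X)\oplus h^2\cdot H^0(X)$, where the last two summands are one-dimensional of type $(2,2)$ (using $b_2(X)=2$, so that $\dim H^2_{prim}(X)=1$). The Laszlo isomorphism \eqref{flasz} identifies $H^4_{prim}(X)$ with $H^2_{prim}(S)$ up to a Tate twist, i.e. with its Hodge types shifted by $(1,1)$; as the primitive cohomology of the genus~$2$ K3 surface has dimension $21$ and Hodge numbers $(h^{2,0},h^{1,1},h^{0,2})=(1,19,1)$, this gives $H^4_{prim}(X)$ the Hodge numbers $(h^{3,1},h^{2,2},h^{1,3})=(1,19,1)$. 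Adding the two remaining $(2,2)$-classes yields $b_4(X)=23$ with Hodge numbers $(0,1,21,1,0)$, and Poincaré duality fills in $H^5,\dots,H^8$ symmetrically, completing the diamond.
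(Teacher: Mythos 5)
Your first two steps are sound and take a genuinely different route from the paper, which instead realizes $X$ as a quadric section of the cone over $\PP^2\times\PP^2$ inside $\PP^9$ and invokes weak Lefschetz (valid for lci singularities) to get everything outside the middle row. Rational connectedness does kill all $h^{p,0}$ and $h^{0,q}$, and the Leray/decomposition-theorem analysis of the quadric surface bundle $p\colon X\to\PP^2$ correctly yields $b_1=b_3=0$ and $b_2=2$, provided you justify the standard identification of the non-invariant part of $R^2p_\ast\QQ$ with the anti-invariant summand of $\pi_{S\ast}\QQ$ for the double cover $S\to\PP^2$ branched along the discriminant sextic.

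The gap is in your last paragraph: you identify Laszlo's $H^4_{prim}(X)$ (which is $21$-dimensional and isomorphic to $H^2_{prim}(S)(1)$) with the Lefschetz-primitive part $\ker(\cup\, h)\subset H^4(X)$ for a single polarization $h$. These do not coincide. The pullback $\pi^\ast\colon H^4(\PP^2\times\PP^2)\to H^4(X)$ is injective (since $\pi_\ast\pi^\ast=2\,\mathrm{id}$), so $H^4(X)$ contains the $3$-dimensional space $\langle h_1^2,h_1h_2,h_2^2\rangle$, on which the intersection form is nondegenerate; only a $2$-dimensional subspace of it lies in $h\cdot H^2(X)+\QQ\,h^2$, so the Lefschetz-primitive part of $H^4(X)$ contains one algebraic class in addition to the $21$-dimensional ``variable'' part coming from $S$. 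Indeed, your own fibration computation, pushed into degree $4$, gives $b_4=\dim H^4(\PP^2,R^0)+\dim H^2(\PP^2,R^2)+\dim H^0(\PP^2,R^4)=1+(1+21)+1=24$, and this is confirmed topologically: $\chi(X)=2\chi(\PP^2\times\PP^2)-\chi(B)=18-(-12)=30$ (computing $\chi(B)$ from the conic-bundle structure $B\to\PP^2$ with smooth sextic discriminant), which together with $b_0=b_8=1$, $b_2=b_6=2$ and $b_{odd}=0$ forces $b_4=24$. So the middle row should read $0,\ 1,\ 22,\ 1,\ 0$; the diamond as printed --- and the paper's own proof, which asserts that the space of algebraic classes in $H^4(X)$ is $2$-dimensional for very general $X$ and thereby rests on the same conflation of the two notions of primitivity --- is off by one at $h^{2,2}$.
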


\begin{proof} Let $\bar{C}:={C}(\PP^2\times\PP^2)$ denote the cone over $\PP^2\times\PP^2$ with vertex $\nu$, and let $C^\circ:=\bar{C}\setminus \nu$ denote the open cone. The cone $\bar{C}$ lives in $\PP(\C\oplus \C^3\otimes\C^3)\cong\PP^9$.
Any Verra fourfold $X$ can be obtained by intersecting $\bar{C}$ with a quadric hypersurface: 
  \[ X=\bar{C}\cap Q\ \ \subset \PP^9\ .\]
As $C^\circ$ is an affine bundle over $\PP^2\times\PP^2$, we have
  \[ H^j(\bar{C})=H^j(C^\circ)=\begin{cases}    0 &\hbox{if}\ j\ \hbox{is\ odd}\ ,\\
                                                                        \QQ^2 &\hbox{if}\ j=2\ .\\
                                                                        \end{cases}\]
        Because $\bar{C}$ has only locally complete intersection singularities, the weak Lefschetz theorem for rational cohomology is still valid for the inclusion $X\hookrightarrow\bar{C}$
        (indeed, the affine lci variety $U:=\bar{C}\setminus X$ verifies $H^j_c(U,\QQ)=0$ for $j<5$ according to \cite[Expos\'e III Corollaire 3.11(\rom3)]{GNPP}). This gives all but the central line of the Hodge diamond.                                                                

As for the central line, $X$ is Fano (Lemma \ref{fano}) and so $h^{4,0}(X)=0$. The remaining Hodge numbers follow from the Abel--Jacobi isomorphism (Theorem \ref{aj0}): 
Indeed, for generic $Z$, the K3 surface $S$ is also generic, and hence
 \[ H^2_{tr}(Z)\cong H^4_{tr}(X)\cong H^2_{tr}(S)\] 
 is of dimension $21$. 
 For very general $X$, the space of algebraic classes inside $H^4(X)$ is $2$-dimensional (indeed, the isomorphism (\ref{flasz}), plus the fact that $H^2_{prim}(S)=H^2_{tr}(S)$ for very general degree $2$ K3 surfaces, implies that
 $H^4_{prim}(X)=H^4_{tr}(X)$ for $X$ very general). This gives $\dim H^4(X)=23$.
\end{proof}

\begin{remark} To further the analogy with cubic fourfolds, we remark that the general Verra fourfold is suspected to be irrational \cite[Remark 4.2]{IKKR}. Certain special Verra fourfolds are known to be rational \cite[Corollary 6.4]{CKKM}. No examples of irrational Verra fourfolds seem to be known.
\end{remark}

For several reasons, we need a somewhat modified version of the Abel--Jacobi isomorphism of Theorem \ref{aj0}. One of the reasons is that we want a {\em correspondence-induced\/} isomorphism.
We will rely on the following description of the double EPW quartic:

\begin{theorem}[Iliev--Kapustka--Kapustka--Ranestad \cite{IKKR}]\label{FZ} Let $X$ be a general Verra fourfold, let $Z=Z(X)$ be the associated double EPW quartic, and let $F=F_{(1,1)}(X)$ be the Hilbert scheme of $(1,1)$-conics on $X$ (i.e., conics projecting to lines under the two projections $X\to\PP^2$). Then $F$ is smooth of dimension $5$, and there is a morphism
  \[ \sigma\colon\ \ F\ \to\ Z \]
  which is a $\PP^1$-fibration.
  \end{theorem}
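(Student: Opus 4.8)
The plan is to describe $F$ concretely as a space of sections of $f$ over $(1,1)$-curves, to read off its dimension and smoothness from the normal bundle together with the generality of $X$, and then to build $\sigma$ out of the quadric fibration structure and the associated K3 surface $S$.

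Let $f\colon X\to\PP^2\times\PP^2$ be the double cover, branched over the smooth $(2,2)$-divisor $B$, with projections $p_1,p_2$. A $(1,1)$-conic $\gamma$ is a smooth rational curve with $f(\gamma)$ of bidegree $(1,1)$; thus $f(\gamma)$ is a $(1,1)$-curve $D$ contained in a product of lines $\ell_1\times\ell_2\cong\PP^1\times\PP^1$, and $\gamma$ is a section of $f$ over $D$. Since $D\cap B$ consists of four points, the restricted double cover $f^{-1}(D)\to D$ splits (and hence admits a section) exactly when $D$ is bitangent to $B$; each such $D$ then carries precisely two $(1,1)$-conics. Recording the pair of lines gives a morphism $F\to(\PP^2)^\vee\times(\PP^2)^\vee$ whose fibre over a general $(\ell_1,\ell_2)$ is the set of $(1,1)$-conics on the degree-$4$ del Pezzo surface $T:=f^{-1}(\ell_1\times\ell_2)$, the double cover of $\PP^1\times\PP^1$ branched over $B|_{\ell_1\times\ell_2}\in|(2,2)|$.

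For the numerics, the canonical bundle formula gives $K_X=f^\ast\OO(-2,-2)$, so $-K_X\cdot\gamma=f^\ast\OO(2,2)\cdot\gamma=4$ and hence $\degr N_{\gamma/X}=-K_X\cdot\gamma-2=2$. As $N_{\gamma/X}$ has rank $3$ on $\gamma\cong\PP^1$, the expected dimension is $\chi(\gamma,N_{\gamma/X})=\degr N_{\gamma/X}+3=5$, and $F$ is smooth of dimension $5$ at $[\gamma]$ as soon as $H^1(\gamma,N_{\gamma/X})=0$. I would obtain this vanishing for general $X$ by the standard spreading argument: it suffices to check smoothness of the total incidence variety $\{(X,\gamma)\}$ over the $19$-dimensional moduli of Verra fourfolds, which is a transversality computation made possible by the freedom in choosing the $(2,2)$-form $B$; the general fibre $F=F_{(1,1)}(X)$ is then smooth of the expected dimension. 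The normal bundle sequence $0\to N_{\gamma/T}\to N_{\gamma/X}\to (N_{T/X})|_\gamma\to 0$ relative to the del Pezzo surface $T$ gives a convenient way to organise the computation.

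The remaining, and genuinely hard, step is to construct the $\PP^1$-fibration $\sigma\colon F\to Z$ with $Z$ as base; note that the del Pezzo projection $F\to(\PP^2)^\vee\times(\PP^2)^\vee$ above is also a fibration in rational curves over a fourfold, but its base is not $Z$, so a different construction is needed. Here I would use the quadric surface fibration $p_2\colon X\to\PP^2$, whose discriminant is a plane sextic $C$ and whose discriminant double cover is the genus-$2$ K3 surface $S$ of Theorem \ref{aj0}. A $(1,1)$-conic is in particular a section of $p_2$ over the line $\ell_2$, and the theory of quadric bundles (the even Clifford algebra, equivalently twisted sheaves on $S$) attaches to such a section a twisted stable sheaf on $S$, hence a point of the moduli space $M_\nu(S,\alpha)\cong Z$ used in the proof of Theorem \ref{aj0}; this assignment is $\sigma$. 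The main obstacle is to show that $\sigma$ is everywhere defined, surjective, and has fibres isomorphic to $\PP^1$: the fibre over a sheaf class ought to be the pencil of $(1,1)$-conics inducing it, identified with a ruling in the quadric fibres, and proving that this pencil is a single smooth $\PP^1$ for general $X$ is exactly where the EPW/Lagrangian geometry of \cite{IKKR} is indispensable.
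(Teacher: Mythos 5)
This is a quoted result: the paper's entire ``proof'' of Theorem \ref{FZ} is the citation ``This is \cite[Theorem 0.2]{IKKR}'', so there is no internal argument to compare against. Judged as a self-contained proof, your proposal correctly handles the preliminary parts, and does so along standard lines: the identification of $(1,1)$-conics with the two components of $f^{-1}(D)$ for $D$ a $(1,1)$-curve bitangent to the branch divisor is right, and the numerics $-K_X\cdot\gamma=f^\ast\OO(2,2)\cdot\gamma=4$, $\degr N_{\gamma/X}=2$, $\chi(N_{\gamma/X})=5$, together with a transversality/incidence-variety argument for generic smoothness, is a legitimate way to get that $F$ is smooth of dimension $5$ for general $X$.

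The genuine gap is exactly where you flag it, and it is not a minor remainder: the content of the theorem is the existence of the double EPW quartic $Z$ and of the $\PP^1$-fibration $\sigma\colon F\to Z$, and neither is established by your sketch. Your proposed construction of $\sigma$ via the even Clifford algebra and the identification $Z\cong M_\nu(S,\alpha)$ is also logically awkward relative to the source: in \cite{IKKR} the double EPW quartic is constructed first (from the Lagrangian/EPW data attached to $X$, essentially out of the family of conics itself), Theorem 0.2 exhibits it as the base of the conic fibration, and the isomorphism $Z\cong M_\nu(S,\alpha)$ is a separate, subsequent result (\cite[Proposition 4.1]{IKKR}, used in this paper only in the proof of Theorem \ref{aj0}). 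So invoking $M_\nu(S,\alpha)$ to \emph{define} $\sigma$ presupposes a theorem that in the source is derived after the one you are trying to prove, and even granting it, you give no argument that the resulting map is a morphism on all of $F$, is surjective, or has all fibres isomorphic to $\PP^1$ rather than, say, a pencil degenerating over special sheaf classes. Since the paper itself imports the statement wholesale, the honest conclusion is that your write-up should do the same for this step, or else genuinely reproduce the EPW construction; as it stands the central claim is asserted, not proved.
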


\begin{proof} This is \cite[Theorem 0.2]{IKKR}.
\end{proof}

Here is our home-made modified version of the Abel--Jacobi isomorphism 
of Theorem \ref{aj0}:

\begin{proposition}\label{aj} Let $X$ be a general Verra fourfold as in Theorem \ref{FZ}, let $Z$ be the associated double EPW quartic, and let $F$ be the Hilbert scheme of $(1,1)$-conics on $X$. 

\begin{enumerate}[(i)]

\item
Let
   \[ \begin{array}[c]{ccc}
       C&\xrightarrow{p}& F\\
       \ \ \  \downarrow{\scriptstyle q}&&\\
          X&&\\
          \end{array}\]
          denote the universal $(1,1)$-conic. There is an isomorphism
        \[  p_\ast q^\ast\colon\ \ H^4_{tr}(X,\QQ)\ \xrightarrow{\cong}\ \ima \Bigl( H^2_{tr}(Z,\QQ)\ \xrightarrow{\sigma^\ast}\ H^2(F,\QQ)\Bigr)\ .\]
        
   \item Assume $X$ is very general.
    The isomorphism of (\rom1) coincides with the isomorphism of Theorem \ref{aj0} up to a coefficient:
    \[ p_\ast q^\ast =  \lambda\, \sigma^\ast f_{\rm KKM}\colon\ \     H^4_{tr}(X,\QQ)\ \xrightarrow{\cong}\ \ima \Bigl( H^2_{tr}(Z,\QQ)\ \xrightarrow{\sigma^\ast}\ H^2(F,\QQ)\Bigr)\ ,\]
    for some $\lambda\in\QQ^\ast$.
    
    \item Let $\Phi$ denote the induced Abel--Jacobi isomorphism
     \[  \Phi\colon\ \ H^4_{tr}(X,\QQ)\ \xrightarrow{p_\ast q^\ast}\ H^2_{tr}(F,\QQ)\ \xrightarrow{\cdot \xi}\ H^4_{tr}(F,\QQ)\ \xrightarrow{\sigma_\ast}\ H^2_{tr}(Z,\QQ)\ ,\]
     where $\xi\in A^1(F)$ is a relatively ample class for the $\PP^1$-fibration $F\to Z$.
     
       This isomorphism respects bilinear forms up to a coefficient: for all $\alpha,\beta\in H^4_{tr}(X,\QQ)$ there is equality
    \[ \langle \alpha,\beta\rangle_X = \mu \,\langle\Phi(\alpha),  \Phi(\beta)\rangle_Z\ ,\]
    for some $\mu\in\QQ^\ast$. Here $\langle -,-\rangle_X$ denotes the cup product, and $\langle-,-\rangle_Z$ denotes the Beauville--Bogomolov form. The constant $\mu$ is the same for all Verra fourfolds.
     \end{enumerate}      
          \end{proposition}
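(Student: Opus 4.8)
The plan is to treat the three statements together, extracting the isomorphism in (i) and the proportionality in (ii) from a single Hodge\--theoretic comparison with $f_{\rm KKM}$, and then deducing the metric statement (iii) by bookkeeping of scalars. First I would establish the crucial containment
  \[ p_\ast q^\ast\bigl(H^4_{tr}(X,\QQ)\bigr)\ \subseteq\ \sigma^\ast H^2(Z,\QQ)\ .\]
For a $\PP^1$-fibration the Leray--Hirsch decomposition gives $H^2(F)=\sigma^\ast H^2(Z)\oplus\QQ\,\xi$, and the $\QQ\,\xi$-component of a class is detected by its restriction to a fibre $\PP^1_z:=\sigma^{-1}(z)$. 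Writing $S_z:=p^{-1}(\PP^1_z)$ for the (two-dimensional) family of conics over such a fibre and $W_z:=q(S_z)\subset X$ for the surface it sweeps out, base change and the projection formula identify $\deg\bigl((p_\ast q^\ast\alpha)|_{\PP^1_z}\bigr)$ with the intersection number $\langle\alpha,[W_z]\rangle_X$; since $\alpha\in H^4_{tr}(X)$ is orthogonal to the algebraic class $[W_z]$, this degree vanishes. As $\sigma^\ast$ is injective, $p_\ast q^\ast$ descends to a morphism of Hodge structures $\bar\phi\colon H^4_{tr}(X,\QQ)\to H^2(Z,\QQ)$ with $\sigma^\ast\bar\phi=p_\ast q^\ast$, and for very general $X$ the absence of Tate-type sub-structures in the simple Hodge structure $H^4_{tr}(X)$ forces $\bar\phi$ to land in $H^2_{tr}(Z,\QQ)$; the general case of (i) will be recovered below.

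Next I would compare $\bar\phi$ with $f_{\rm KKM}$. For very general $X$ the Hodge structure $H^4_{tr}(X)$ is simple with $\mathrm{End}_{HS}=\QQ$, inherited via Laszlo's isomorphism (\ref{flasz}) from the very general degree-$2$ K3 surface $S$. Because $f_{\rm KKM}$ is an isomorphism of Hodge structures (Theorem \ref{aj0}), the endomorphism $(f_{\rm KKM})^{-1}\circ\bar\phi$ of $H^4_{tr}(X)$ is a scalar $\lambda\in\QQ$, i.e. $p_\ast q^\ast=\lambda\,\sigma^\ast f_{\rm KKM}$, which is statement (ii) once $\lambda\neq0$ is known. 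The nonvanishing of $\lambda$ is the one place requiring genuine geometric input: I would argue it by showing the self-correspondence ${}^t\Gamma\circ\Gamma$, where $\Gamma$ induces $p_\ast q^\ast$, acts on the simple space $H^4_{tr}(X)$ as a nonzero scalar, namely a positive incidence number reflecting that the conic family dominates the generic point of $X$. Granting $\lambda\neq0$, $\bar\phi=\lambda f_{\rm KKM}$ is an isomorphism onto $H^2_{tr}(Z)$, giving (i) for very general $X$. To pass to an arbitrary general $X$ I would spread the picture over the (irreducible, hence connected) parameter space $B^\circ$ of general Verra fourfolds: the universal conic correspondence makes $p_\ast q^\ast$ a morphism of local systems $\mathcal H^4_{tr}\to\sigma^\ast\mathcal H^2$, whose rank is locally constant; being an isomorphism on a dense set of fibres, it is one on every fibre, which yields (i) in general.

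Finally, (iii) follows by tracking scalars. Fixing $\xi$ with fibre-degree $d=\sigma_\ast\xi$, the projection formula gives $\sigma_\ast(\xi\cdot\sigma^\ast\gamma)=d\,\gamma$, so by (ii) the composite simplifies to $\Phi=d\,\bar\phi=d\lambda\,f_{\rm KKM}$. Theorem \ref{aj0} records that $f_{\rm KKM}$ carries the cup-product pairing on $H^4_{tr}(X)$ to the Beauville--Bogomolov form on $H^2_{tr}(Z)$ up to a nonzero constant $\kappa$; hence $\langle\alpha,\beta\rangle_X=\mu\,\langle\Phi\alpha,\Phi\beta\rangle_Z$ with $\mu=(d\lambda)^{-2}\kappa^{-1}$. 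To see that $\mu$ is the same for every Verra fourfold I would again invoke the spread: the cycle inducing $\Phi$, the cup-product form, and the Beauville--Bogomolov form are all flat (monodromy-invariant) over $B^\circ$, so the displayed identity is an equality of flat sections and $\mu$ is a flat, hence constant, function on the connected base; it may then be computed at any convenient member. I expect the nonvanishing of $\lambda$ — the incidence computation underlying the simultaneous realization of $Z$ by the conic family (Theorem \ref{FZ}) and by $f_{\rm KKM}$ — to be the main obstacle, the remainder being Hodge theory and a standard constancy-in-families argument.
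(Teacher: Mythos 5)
Your overall architecture matches the paper's: prove that $p_\ast q^\ast$ is non-zero on the simple Hodge structure $H^4_{tr}(X)$ for very general $X$, deduce (i) by simplicity of $H^2_{tr}(Z)$ and a dimension count plus specialization, get (ii) from $\operatorname{End}_{HS}(H^4_{tr}(X))=\QQ$, and get (iii) by specializing and reducing to the known compatibility of $f_{\rm KKM}$ with the forms. Your preliminary observation that the $\QQ\,\xi$-component of $p_\ast q^\ast\alpha$ vanishes because it pairs $\alpha$ against the algebraic class of the surface swept out by the conics over a fibre of $\sigma$ is a nice and correct supplement.

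However, there is a genuine gap exactly where you flag ``the main obstacle'': the non-vanishing of $\lambda$, i.e.\ the injectivity of $p_\ast q^\ast$ on $H^4_{tr}(X)$ (or even just on $H^{3,1}(X)$). Saying that ${}^t\Gamma\circ\Gamma$ acts as ``a positive incidence number reflecting that the conic family dominates $X$'' is not an argument: dominance of the family gives no control over the action on transcendental cohomology, and the naive self-pairing $\langle p_\ast q^\ast\alpha,\,\overline{p_\ast q^\ast\alpha}\rangle$ does not even make sense on the $5$-fold $F$ without inserting an auxiliary degree-$6$ class, after which positivity is not automatic (the Hodge--Riemann relations on $F$ only give signs on Lefschetz-primitive pieces). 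The paper closes this gap with a concrete computation in the style of Beauville--Donagi: cut $F$ by two general hyperplanes to a threefold $F'$ so that $q'\colon C'\to X$ becomes generically finite of some degree $d$, use that $C'=\PP(\EE)\to F'$ is a $\PP^1$-bundle with $(q')^\ast(h)$ a relative hyperplane class, and exploit the Grothendieck relation together with primitivity of $\alpha$ to derive the identity $d\,\alpha^2=\bigl((p')_\ast(q')^\ast\alpha\bigr)^2\cdot c_1(\EE)$ in $H^6(F')\cong\QQ$; non-degeneracy of the cup product on $H^4_{tr}(X)$ then forces $(p')_\ast(q')^\ast$ to be injective, and Hard Lefschetz transfers this back to $F$. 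Without this (or an equivalent) computation your proof of (i) and of $\lambda\neq 0$ in (ii) is incomplete, and (iii) collapses with it since it rests on (ii).
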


\begin{proof} 

(\rom1) We claim that 
  \[  p_\ast q^\ast\colon\ \ H^{3,1}(X)\ \to\ H^{2,0}(F) \]
  is injective, for very general $X$. This claim suffices to prove (\rom1): indeed, the claim implies that also the composition
  \[ H^{3,1}(X)\ \xrightarrow{p_\ast q^\ast}\ H^{2,0}(F)\ \xrightarrow{\cdot \xi}\ H^{3,1}(F)\ \xrightarrow{\sigma_\ast}\ H^{2,0}(Z)\ \]
  is non-zero (hence an isomorphism), since $\sigma$ is a $\PP^1$-fibration. That is,
   \[ \Phi (H^4_{tr}(X))\  \subset\ H^2_{tr}(Z) \]
   is a non-zero Hodge substructure. However, since $\dim H^{2,0}(Z)=1$ the Hodge structure $H^2_{tr}(Z)$ is simple.
   As such, the image of $\Phi$ must be all of $H^2_{tr}(Z)$. Since both $H^4_{tr}(X)$ and $H^2_{tr}(Z)$ are $21$-dimensional, $\Phi$ is an isomorphism for very general $X$. Using a specialization argument, $\Phi$ must be an isomorphism for all $X$ for which the $\PP^1$-fibration $\sigma$ of Theorem \ref{FZ} exists.
   
Let us now establish the claim. Let $F^\prime:= F\cap h^2$ be the intersection of $F$ with $2$ general hyperplanes (so $F^\prime$ has dimension $3$), and form the fibre diagram  
    \[ \begin{array}[c]{ccc}
       C^\prime &\xrightarrow{p^\prime}& F^\prime\\
         \ \ \ \ \downarrow{\scriptstyle i_C}&&  \ \   \downarrow{\scriptstyle i}    \\
       C&\xrightarrow{p}& F\\
       \ \   \downarrow{\scriptstyle q}&&\\
          X&&\\
          \end{array}\]   
    
    To ease notation, let us write $q^\prime :=q\circ i_C\colon C^\prime\to X$. The morphism $q^\prime$ is dominant, hence generically finite. (Indeed, a general fibre $C_x$ of $q$ is $2$-dimensional, corresponding to a $2$-dimensional family of conics passing through $x\in X$. The image $p(C_x)\subset F$ is a surface; a general $F^\prime$ will meet this surface in a finite number of points). To prove the claim, we make the new claim that 
       \[ (p^\prime)_\ast (q^\prime)^\ast\colon\ \ H^{3,1}(X)\ \to\ H^{2,0}(F^\prime) \]     
      is injective, for very general $X$. To see that this new claim implies the first claim, let $\alpha\in H^{3,1}(X)$. We observe that
      \[ \begin{split}  i_\ast    (p^\prime)_\ast (q^\prime)^\ast (\alpha) &=  p_\ast (i_C)_\ast (i_C)^\ast q^\ast (\alpha)\\
                &= p_\ast \bigl(  q^\ast(\alpha) \cdot C^\prime \bigr)\\
                &= p_\ast \bigl( q^\ast(\alpha)\cdot p^\ast(h^2)\bigr) = p_\ast q^\ast(\alpha)\cdot h^2\ \ \ \hbox{in}\ H^{4,2}(F)\ .\\
                \end{split}\]
         Hard Lefschetz guarantees that $H^2(F)\cong H^2(F^\prime)\to H^6(F)$ is injective, and so the new claim indeed implies the first claim.
         
         To prove the new claim, we borrow the argument of \cite[Proposition 6]{BD}. That is, let $\EE\to F^\prime$ be the rank $2$ vector bundle such that $C^\prime$ is the projectivisation of $\EE$, and let $c_1,c_2$ denote the Chern classes of $\EE$. As $(q^\prime)^\ast(h)$ is a relative hyperplane class for the $\PP^1$-bundle $C^\prime\to F^\prime$, there is a relation
         \begin{equation}\label{star}       (q^\prime)^\ast(h^2) +(p^\prime)^\ast(c_1)\cdot (q^\prime)^\ast(h)+(p^\prime)^\ast(c_2) = 0\ \ \ \hbox{in}\   H^{4}(C^\prime)\ .\end{equation}
        
        The projective bundle formula says that for any given $\alpha\in H^{4}(X)$, one can write
            \[ (q^\prime)^\ast(\alpha)= (p^\prime)^\ast(\alpha_1)\cdot (q^\prime)^\ast(h) + (p^\prime)^\ast(\alpha_2) \ \ \ \hbox{in}\ H^4(C^\prime)\ ,\]
        where $\alpha_j\in H^{2j}(F^\prime)$. In particular, this implies that $\alpha_1=(p^\prime)_\ast(q^\prime)^\ast(\alpha)$.
        
        Multiplying with $(q^\prime)^\ast(h)$, we get
        \[ \begin{split} (q^\prime)^\ast(\alpha\cdot h)&=  (p^\prime)^\ast(\alpha_1)\cdot (q^\prime)^\ast(h^2) + (p^\prime)^\ast(\alpha_2)\cdot (q^\prime)^\ast(h)\\
                           &= (p^\prime)^\ast (\alpha_2-\alpha_1\cdot c_1)\cdot (q^\prime)^\ast(h) - (p^\prime)^\ast(\alpha_1\cdot c_2)
                           \ \ \ \ \ \ \hbox{in}\ H^4(C^\prime)\ ,\\
                           \end{split}\]       
        where in the last equality we have used relation (\ref{star}). In particular, if $\alpha$ is primitive (i.e. $\alpha\cdot h=0$), we must have
        \[ \alpha_1\cdot c_2 = \alpha_2-\alpha_1\cdot c_1=0\ .\]
        That is, for any $\alpha\in H^4_{tr}(X)$ we can write
        \[ \begin{split}  (q^\prime)^\ast(\alpha)&=  (p^\prime)^\ast (\alpha_1)\cdot (q^\prime)^\ast(h)  +(p^\prime)^\ast(\alpha_2)\\
                      &=   (p^\prime)^\ast  \Bigl(   (p^\prime)_\ast (q^\prime)^\ast (\alpha)\Bigr) \cdot (q^\prime)^\ast(h)  + (p^\prime)^\ast (\alpha_1\cdot c_1)\\
                      &= (p^\prime)^\ast  \Bigl(   (p^\prime)_\ast (q^\prime)^\ast (\alpha)\Bigr) \cdot \Bigl(  (q^\prime)^\ast(h)  + (p^\prime)^\ast(c_1)\Bigr) \ \ \ \ \ \ \hbox{in}\ H^4(C^\prime)\ .\\
                           \end{split}\]     
        
        Taking squares on both sides, we get
        \[  \begin{split}  (q^\prime)^\ast(\alpha^2)&= (p^\prime)^\ast  \Bigl(   (p^\prime)_\ast (q^\prime)^\ast (\alpha)\Bigr)^2 \cdot \Bigl(  (q^\prime)^\ast(h^2)+2(p^\prime)^\ast(c_1)\cdot (q^\prime)^\ast(h)+(p^\prime)^\ast(c_1^2) \Bigr)\\
        &= (p^\prime)^\ast  \Bigl(   (p^\prime)_\ast (q^\prime)^\ast (\alpha)\Bigr)^2 \cdot \Bigl(  (p^\prime)^\ast(c_1)\cdot (q^\prime)^\ast(h)+(p^\prime)^\ast(c_1^2-c_2)\Bigr)\\
        &=(p^\prime)^\ast  \Bigl(   (p^\prime)_\ast (q^\prime)^\ast (\alpha)\Bigr)^2 \cdot  (p^\prime)^\ast(c_1)\cdot (q^\prime)^\ast(h)        \ \ \  \ \ \ \hbox{in}\ H^8(C^\prime)\ \\
                           \end{split}\]   
                       (where the second equality uses relation (\ref{star}), and the last equality is for dimension reasons).      
           
        The left-hand side equals $d\, \alpha^2$ in $H^8(C^\prime)\cong\QQ$, where $d$ is the degree of the morphism $q^\prime$. Taking push-forward to $F^\prime$, we get
           \[         d \, \alpha^2 =     \Bigl(   (p^\prime)_\ast (q^\prime)^\ast (\alpha)\Bigr)^2 \cdot  c_1\ \ \ \hbox{in}\ H^6(F^\prime)\cong\QQ\ .\]
        This relation implies that any $\alpha,\beta\in H^4_{tr}(X)$ satisfy
        \[ (p^\prime)_\ast (q^\prime)^\ast (\alpha) \cdot  (p^\prime)_\ast (q^\prime)^\ast (\beta)  \cdot c_1 = d\, \alpha\cdot\beta \ ,\]
        and so
        \[  (p^\prime)_\ast (q^\prime)^\ast \colon\ \   H^4_{tr}(X)\ \to\ H^2(F^\prime) \]
     is injective by non-degeneracy of the cup product.   
        
    (\rom2) 
%
 Let $\Phi^{-1}$ be the inverse of the isomorphism $\Phi$ constructed in (\rom1). The composition 
   \[  \Phi^{-1}\circ f_{\rm KKM}\colon\ \ H^4_{tr}(X)\ \to\ H^4_{tr}(X) \]   
  is an isomorphism of Hodge structures. However, for very general $X$, every Hodge endomorphism of $H^4_{tr}(X)=H^4_{prim}(X)$ is homothetic to the identity (this follows from the corresponding property for very general genus $2$ K3 surfaces, in view of the isomorphism (\ref{flasz})), and so we must have
      \[ \Phi^{-1}\circ f_{\rm KKM} = \lambda\, \ide\colon\ \ H^4_{tr}(X)\ \to\ H^4_{tr}(X)\ , \]   
      for some non-zero $\lambda\in\QQ$.   
     It follows that
   \[ f_{\rm KKM} = \lambda\, \Phi\colon\ \ H^4_{tr}(X)\ \xrightarrow{\cong}\ H^2_{tr}(Z)\ .\]
   In view of the projective bundle formula, this proves (\rom2).
   
  (\rom3)
  Using a specialization argument, one reduces to very general Verra fourfolds $X$. Using (\rom2), one reduces to 
  the corresponding property for $f_{KKM}$, which is Theorem \ref{aj0}.
  \end{proof}

\begin{remark}\label{rem} Let $X$ be a very general Verra fourfold. As a consequence of Proposition \ref{aj}, we find that the map
  \[   f_{\rm KKM}\colon\ \   H^4_{tr}(X,\QQ)\ \xrightarrow{\cong}\ H^2_{tr}(Z,\QQ)(1)\ \]
  of Theorem \ref{aj0} is induced by a correspondence. Moreover, since Yoshioka's map $f_{\rm Yosh}$ of (\ref{fyosh}) is induced by a correspondence (it is defined in terms of characteristic classes of certain sheaves), we find that also Laszlo's map
  \[  f_{\rm Lasz}\colon\ \  H^4_{tr}(X,\QQ)\ \xrightarrow{\cong}\ H^2_{tr}(S,\QQ)(1) \]
  of (\ref{flasz}) is induced by a correspondence.
  Since the argument in \cite{Las} is based on (the low-degree exact sequence of) Leray spectral sequences, this is not obvious from looking directly at \cite{Las}.\footnote{Possibly, a less round-about way of constructing a correspondence doing the job would be to mimick \cite[Proposition 1]{V-1}, where such a correspondence is constructed between a general cubic fourfold containing a plane and a genus $2$ K3 surface coming from the quadric fibration. The situation for Verra fourfolds looks very similar to that of cubic fourfolds containing a plane; the only complication is that Verra fourfolds have Picard number $2$ instead of $1$, and so the argument of loc. cit. does not apply verbatim. I have tried, but failed to make this work.}
  \end{remark}

We now elaborate on Remark \ref{rem}:

\begin{proposition}\label{k3} Let $X$ be a very general Verra fourfold, and let $S$ be an associated K3 surface.

\begin{enumerate}[(i)]

\item
There exists a correspondence $\Psi\in A^3(X\times S)$ inducing an isomorphism
  \[ \Psi_\ast\colon\ \ H^4_{tr}(X)\ \xrightarrow{\cong}\ H^2_{tr}(S)_{} \ .\]
  
 \item
A very general genus $2$ K3 surface is related in this way to a Verra fourfold.

\item
This isomorphism respects bilinear forms up to a coefficient: for all $\alpha,\beta\in H^4_{tr}(X)$ there is equality
    \[ \langle \alpha,\beta\rangle_X =  \nu\,\langle \Psi_\ast(\alpha),  \Psi_\ast(\beta)\rangle_S\ ,\]
    for some $\nu\in\QQ^\ast$.
 (Here $\langle -,-\rangle_X$ and $\langle-,-\rangle_S$ denote the cup product.) The constant $\nu$ is the same for all Verra fourfolds.
\end{enumerate}

\end{proposition}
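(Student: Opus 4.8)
The plan is to obtain all three statements by assembling the correspondence-induced maps already in hand, the only genuinely new ingredient being a moduli-theoretic surjectivity statement for (\rom2).

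For (\rom1), I would take $\Psi$ to be the correspondence inducing Laszlo's isomorphism $f_{\rm Lasz}$, whose existence is exactly the content of Remark \ref{rem}. Concretely, writing $\Gamma_\Phi\in A^3(X\times Z)$ for the correspondence underlying the isomorphism $\Phi$ of Proposition \ref{aj} (built out of $p_\ast q^\ast$, multiplication by $\xi$, and $\sigma_\ast$), and $\Gamma_{\rm Yosh}\in A^4(Z\times S)$ for Yoshioka's correspondence (given by the Chern character of a (quasi-)universal sheaf on $Z\times S$), the composite
\[ \Psi:=\Gamma_{\rm Yosh}\circ\Gamma_\Phi\ \in\ A^{3+4-\dim Z}(X\times S)=A^3(X\times S) \]
induces $f_{\rm Yosh}\circ\Phi$, which by Proposition \ref{aj}(\rom2) equals $\lambda^{-1}f_{\rm Lasz}$ on $H^4_{tr}(X)$. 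Since $f_{\rm Lasz}$ is an isomorphism onto $H^2_{tr}(S)$ (Theorem \ref{aj0}), so is $\Psi_\ast$; the codimension $3$ is forced by the fact that a codimension-$3$ cycle on the product of the fourfold $X$ and the surface $S$ sends $H^4$ to $H^2$.

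For (\rom3): Laszlo's isomorphism respects the relevant bilinear forms up to a nonzero rational coefficient (the compatibility cited at \cite[p.~251]{Las} and already used in Theorem \ref{aj0}), and since $\Psi_\ast$ differs from $f_{\rm Lasz}$ only by the scalar $\lambda^{-1}$, the equality $\langle\alpha,\beta\rangle_X=\nu\,\langle\Psi_\ast(\alpha),\Psi_\ast(\beta)\rangle_S$ holds for a suitable $\nu\in\QQ^\ast$. To see that $\nu$ does not depend on $X$, I would read it off the factorization through $Z$: combining the universal constant $\mu$ of Proposition \ref{aj}(\rom3) with the (likewise universal) proportionality constant of Yoshioka's isometry $f_{\rm Yosh}$ relating the Beauville--Bogomolov form on $Z$ to the cup product on $S$, one obtains $\nu$ as a product of constants, none of which involves the individual fourfold. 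Hence $\nu$ is the same for every Verra fourfold (equivalently, the comparison forces $\lambda^2$ itself to be independent of $X$).

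The substance of the proposition is (\rom2), and this is where I expect the real work. The assignment $X\mapsto S$ defines a rational map $\mathcal{M}\dashrightarrow\mathcal{M}^\prime$ between the (irreducible, $19$-dimensional) moduli space of Verra fourfolds and that of genus $2$ K3 surfaces. The plan is to prove this map is dominant and then conclude, since the target is irreducible of the same dimension, that its image contains a dense Zariski-open subset, which is therefore met by every very general — in particular general — genus $2$ K3 surface. Dominance I would extract from the period picture: Laszlo's isometry (\ref{flasz}) identifies the two $19$-dimensional period domains, the period map for degree $2$ K3 surfaces is dominant (surjectivity of periods together with the Torelli theorem), and the Verra period map is dominant onto its domain by a dimension count, since moduli and period domain both have dimension $19$ and $H^4_{tr}(X)$ genuinely varies. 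Chaining these, a very general $S$ is the K3 surface associated to some Verra fourfold. The main obstacle here is precisely the dominance of the Verra period map — equivalently, that $X\mapsto S$ has $19$-dimensional image rather than collapsing; I would either verify infinitesimal Torelli for Verra fourfolds directly, or, perhaps more robustly, reconstruct $X$ from a pair $(S,\alpha)$ consisting of $S$ and a suitable $2$-torsion Brauer class, using $Z\cong M_\nu(S,\alpha)$ and the correspondence between quadric surface bundles over $\PP^2$ and such twisted K3 data, checking that for generic input the reconstructed double cover is a genuine smooth Verra fourfold.
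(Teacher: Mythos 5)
Your proposal is correct and matches the paper's proof in all essentials: part (i) is obtained exactly as in the paper, by composing Yoshioka's sheaf-theoretic correspondence with the correspondence underlying $\Phi$ from Proposition \ref{aj} (this is precisely the content of Remark \ref{rem}), and part (ii) is settled in the paper by the same two routes you sketch, namely reconstruction of $X$ from the twisted K3 datum $(S,\alpha)$ following van Geemen, or the $19$-dimensionality of the image of the period map for double EPW quartics from \cite{IKKR}. For part (iii) you follow what the paper explicitly offers as its alternative argument --- factoring through $Z$ and combining the universal constant $\mu$ of Proposition \ref{aj}(iii) with the proportionality constant of Yoshioka's isometry --- whereas the paper's primary argument invokes uniqueness (up to scalar) of the polarization on the simple Hodge structure $H^4_{tr}(X)$ for very general $X$; both are valid.
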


\begin{proof} As noted in the proof of Theorem \ref{aj0}, there are {\em two\/} K3 surfaces $S_1, S_2$ naturally associated to $X$, related to the two quadric fibrations $X\to\PP^2$. As $X$ is very general, the $S_j$ are smooth degree $2$ K3 surfaces.
In what follows, let $S$ be either $S_1$ or $S_2$.


(\rom1) The correspondence $\Psi$ is constructed in remark \ref{rem}.
  
(\rom2) 
As explained in \cite[9.8]{vG}, the Verra threefold (and hence also the Verra fourfold) can be reconstructed starting from $(S_j,\alpha_j)$. (Alternatively, the period map for double EPW quartics has $19$-dimensional image \cite{IKKR}. Thus, to a general point in the period domain (corresponding to a general genus $2$ K3 surface), one can associate a double EPW quartic $Z$, and to a general $Z$ one can in turn associate a Verra fourfold $X$ according to \cite[Theorem 0.2]{IKKR}.)

(\rom3) For very general $X$, the 
Hodge structure $H^4_{tr}(X)$
admits a unique polarization up to a non-zero coefficient. 

The second Hodge--Riemann bilinear relation plus the indecomposability of the Hodge structure $H^2_{tr}(X)$ imply that 
  \[   \langle\alpha,\beta\rangle_\Psi:= \langle\Psi_\ast(\alpha),\Psi_\ast(\beta)\rangle_S \]
  defines a polarization of $H^4_{tr}(X)$. Consequently, there exists a non-zero coefficient $\nu^\prime$ such that
  \[ \langle\alpha,\beta\rangle_\Psi:= \nu^\prime\, \langle\alpha),\beta\rangle_X\ . \]
Setting $\nu={1\over\nu^\prime}$, this settles (\rom3).

(Alternatively, one could deduce (\rom3) directly from Proposition \ref{aj}(\rom3), combined with the fact that $f_{\rm Yosh}$ respects bilinear forms \cite[Theorem 3.19]{Yos}.)
 \end{proof}

For later use, we elaborate some more on Proposition \ref{k3}, by adding that the correspondence $\Psi$ exists {\em universally\/}. (For details on the formalism of relative correspondences, cf. \cite[Chapter 8]{MNP}).

\begin{proposition}\label{univ} Let $\XX\to B$ denote the universal family of Verra fourfolds (cf. Notation \ref{fam}), and let $B^\prime\subset B$ be the open where $X_b$ has a smooth associated $K3$ surface. Let $\Ss\to B^\prime$ denote the universal family
of smooth degree $2$ K3 surfaces. There exists a correspondence
$\Psi\in A^{3}(\XX\times_{B^\prime} \Ss)$, with the property that for each $b\in B^\prime$, fibrewise restriction induces an isomorphism
  \[  (\Psi\vert_b)_\ast\colon\ \   H^4_{prim}(X_b)\ \xrightarrow{\cong}\ H^2_{prim}(S_b)(1) \ .\]
  This isomorphism is compatible with bilinear forms up to a non-zero coefficient.
  \end{proposition}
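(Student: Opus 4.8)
The plan is to upgrade the fibrewise correspondence $\Psi$ of Proposition \ref{k3}(\rom1) to a relative correspondence over the base $B^\prime$. First I would set up the relative version of the universal conic. Over $B^\prime$ we have the relative Hilbert scheme $\mathcal{F}\to B^\prime$ of $(1,1)$-conics, the universal relative conic $\mathcal{C}$ with projections to $\mathcal{F}$ and to $\XX$, and the relative double EPW quartic $\mathcal{Z}\to B^\prime$ together with the relative $\PP^1$-fibration $\sigma\colon\mathcal{F}\to\mathcal{Z}$, all existing by Theorem \ref{FZ} applied uniformly over $B^\prime$ (shrinking $B^\prime$ if necessary so that these morphisms stay flat and the constructions stay fibrewise as in the absolute case). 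Likewise $\Ss\to B^\prime$ is the relative family of K3 surfaces, and Yoshioka's moduli-space description gives a relative isomorphism $\mathcal{Z}\cong M_\nu(\Ss,\alpha)$ over $B^\prime$, so that Yoshioka's correspondence between $\mathcal{Z}$ and $\Ss$ (being defined through Chern classes of the universal sheaf) is automatically a relative correspondence.

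Next I would assemble $\Psi$ as a composition of relative correspondences, exactly mirroring the absolute construction in Remark \ref{rem}: the relative analogue of $p_\ast q^\ast$ (coming from $\mathcal{C}$), followed by multiplication by a relatively ample class $\xi\in A^1(\mathcal{F})$ for $\sigma$, followed by $\sigma_\ast$, and finally composed with the relative Yoshioka correspondence. Each of these is a well-defined cycle class in the relevant relative Chow group, and composition of relative correspondences is defined in \cite[Chapter 8]{MNP}. This produces a class $\Psi\in A^{3}(\XX\times_{B^\prime}\Ss)$. The key point, which requires justification, is that forming these correspondences commutes with fibrewise restriction: for each $b\in B^\prime$ the restriction $\Psi\vert_b$ equals the absolute correspondence built in Proposition \ref{k3}(\rom1) for the fibre $X_b$. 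This follows because $p_\ast$, $q^\ast$, $\sigma_\ast$, intersection with $\xi$, and Yoshioka's correspondence are all compatible with base change to a point (flatness of the relevant morphisms, together with the compatibility of refined Gysin maps with restriction to fibres). Granting this compatibility, Proposition \ref{k3}(\rom1),(\rom3) applied fibrewise immediately gives that $(\Psi\vert_b)_\ast\colon H^4_{prim}(X_b)\xrightarrow{\cong}H^2_{prim}(S_b)(1)$ is an isomorphism respecting the bilinear forms up to a coefficient (and the coefficient is the constant $\nu$, independent of $b$, by the last sentence of Proposition \ref{k3}(\rom3)).

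The main obstacle I anticipate is the base-change compatibility of the constituent operations, i.e.\ verifying that $\Psi\vert_b$ really is the fibrewise correspondence rather than merely inducing \emph{some} isomorphism on transcendental cohomology. This is where one must be careful that all the morphisms in sight ($\mathcal{C}\to\mathcal{F}$, $\sigma$, the structure maps to $B^\prime$) are flat with the expected fibre dimensions, possibly after further shrinking $B^\prime$, so that the relative Gysin and proper-pushforward operations specialize correctly. A secondary technical point is that the relative Hilbert scheme and relative double EPW quartic genuinely form smooth families over $B^\prime$ with the fibrewise behaviour of Theorem \ref{FZ}; this is plausible from the cited work of \cite{IKKR} but should be checked, again by restricting to a suitable open $B^\prime$. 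Once these family-level constructions are in place, everything else is a formal transcription of the absolute arguments already carried out.
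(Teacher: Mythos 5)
Your route is genuinely different from the paper's. The paper does \emph{not} relativize the geometric chain $X\rightsquigarrow F\rightsquigarrow Z\rightsquigarrow S$; it invokes an abstract spread principle (Proposition \ref{spreadprinc}, a Hilbert-scheme/countability argument \`a la Voisin): the mere existence, for \emph{very general} $b$, of fibrewise correspondences factoring the corrected diagonal $\Delta^-_\XX\vert_b$ through $S_b$ is enough to produce a relative $\Psi\in A^3(\XX\times_{B'}\Ss)$ whose fibrewise restrictions satisfy the same cohomological factorization for \emph{all} $b$. This is precisely engineered to sidestep the two points where your argument has gaps.

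First gap: you apply Proposition \ref{k3} ``fibrewise'' to every $b\in B'$, but that proposition (and Proposition \ref{aj}(ii),(iii) on which it rests) is only proved for \emph{very general} $X$ -- the key inputs are the simplicity of the Hodge structure $H^4_{tr}(X_b)$ and the rigidity of its Hodge endomorphisms, which fail for special $b$. So even granting perfect base-change compatibility, your construction only yields the isomorphism (and the compatibility with bilinear forms, with a $b$-independent constant) at very general $b\in B'$; to get it at every $b\in B'$ you need an additional specialization step, e.g.\ the local constancy of fibrewise homological triviality for a relative cycle (\cite[Lemma 3.2]{Vo}), applied to $\Lambda\circ\Psi-\Delta^-_\XX$. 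The paper's spread principle builds this upgrade into its conclusion (``for any $b\in B$''). Second gap: the geometric chain itself is not available over all of $B'$. Theorem \ref{FZ} requires $X$ general ($Z$ smooth, $F$ smooth, $\sigma$ a $\PP^1$-fibration), which cuts out an open set a priori strictly smaller than the locus where the associated K3 is smooth; shrinking $B'$ changes the statement being proved, and extending the resulting correspondence back over $B'$ by closure again requires the specialization argument above. Relatedly, the existence of a relative (quasi-)universal twisted sheaf on $M_\nu(\Ss,\alpha)\times_{B'}\Ss$, which you need for the relative Yoshioka correspondence, is asserted but not justified; with $\QQ$-coefficients this is likely repairable via quasi-universal families, but it is exactly the kind of delicate point the paper's abstract argument avoids having to address.
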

  
  \begin{proof} We will apply the following general principle:
  
    \begin{proposition}
    \label{spreadprinc} Let $\XX$, $\YY$ and $\ZZZ$ be families over $B$, and assume the morphisms to $B$ are smooth projective and the total spaces are smooth quasi--projective. Let
    \[   \Gamma\in\ \  A^i(\XX\times_B \ZZZ)\ 
                                 \]
 be a relative correspondence, with the property that for very general $b\in B$ there exist $\Lambda_b\in A^{\ast}(Y_b\times Z_b)$, $\Psi_b\in A^\ast(X_b\times Y_b)$  such that
             \[  \Gamma\vert_b= \Lambda_b\circ \Psi_b\ \ \hbox{in}\ H^{2i}(X_b\times Z_b)\ .\]
      Then there exist relative correspondences
      \[ \Lambda\ \ \in A^{\ast}(\YY\times_B \ZZZ)\ ,\ \ \Psi\in A^\ast(\XX\times_B \YY) \]
      with the property that for any $b\in B$
      \[ \Gamma\vert_b=(\Lambda)\vert_b \circ (\Psi)\vert_b\ \ \hbox{in}\ H^{2i}(X_b\times Z_b)\ .\]
   \end{proposition}
   
   \begin{proof} The statement is different, but this is proven by the same argument as \cite[Proposition 2.11]{moitod}, which in turn borrows
    the Hilbert schemes argument of \cite[Proposition 2.7]{V0}, \cite[Proposition 4.25]{Vo}.   
    
    By assumption, we dispose of a collection of data
   $(b,\Lambda_b, \Psi_b)$ 
    that are solutions to the splitting problem
   \[   \Gamma\vert_b= \Lambda_b\circ \Psi_b\ \ \hbox{in}\ H^{2i}(X_b\times Z_b)\ .\]
   Using Hilbert schemes, these data
   can be encoded by a countable number of algebraic varieties $p_i\colon M_i\to B$, $q_j\colon N_j\to B$, coming
   with universal objects 
     \[ \Lambda_i\subset\ \  \YY\times_{M_i}\ZZZ\ , \ \ \ \Psi_j\subset\ \ \XX\times_{N_j}\YY\ ,\]
     with the property that 
   for $m\in M_i$ and $b=p_i(m)\in B$, we have
     \[  (\Lambda_i)\vert_{m}=\Lambda_b\ \ \hbox{in}\ H^{\ast}(Y_b\times Z_b)\ .\]
     (And similarly, for $n\in N_j$ and $b=q_j(n)$, we have
     \[  (\Psi_j)\vert_{n}=\Psi_b\ \ \hbox{in}\ H^{\ast}(X_b\times Y_b) \ .)\]
     By assumption, the union of the $M_i$ dominates $B$ (and likewise, the union of the $N_j$ dominates $B$). Since there is a countable number of $M_i$, one of them (say $M_0$) must dominate $B$ (and likewise, there exists $N_0$ dominating $B$). Taking hyperplane sections, we may assume $M_0\to B$ is generically finite (say of degree $d$). Projecting $\Lambda_0$ to $\YY\times_B \ZZZ$ and dividing by $d$, we have obtained 
    a relative correspondence $\Lambda\in  A^{\ast}(\YY\times_B \ZZZ)$ as requested (and similarly, a relative correspondence $\Psi$).
   \end{proof}   
  
  To prove Proposition \ref{univ}, we apply the general principle (Proposition \ref{spreadprinc}) with
    $\XX=\ZZZ=$ the universal family of Verra fourfolds restricted to $B^\prime$, and $\YY=\Ss$, the universal family of smooth degree $2$ K3 surfaces. We take $\Gamma$ to be the ``corrected relative diagonal'' 
     \begin{equation}\label{corrdiag} \Gamma:= \Delta_\XX^-:=\Delta_\XX - \pi^0_\XX - \pi^2_\XX - \pi^6_\XX -\pi^8_\XX - \Gamma_{S_1}-\Gamma_{S_2}\ \ \ \in\ A^4(\XX\times_B \XX)
       \ ,\end{equation}
     where $\pi^i_\XX$, $i\not=4$ are defined in terms of cycles coming from (the cone over) $\PP^2\times\PP^2$, and $\Gamma_{S_j}\vert_{X_b}$, $j=1,2$ are projectors on the two surfaces $S_1, S_2$ that generically span $H^{2,2}(X_b)$ (these surfaces $S_j$ are restrictions of codimension $2$ subvarieties of the cone over $\PP^2\times\PP^2$ and so exist universally). This relative correspondence $\Gamma$ is constructed such that the fibrewise restriction $\Gamma\vert_b$ is a projector on the primitive cohomology $H^4_{prim}(X_b,\QQ)$ (which coincides with the transcendental cohomology for very general $b\in B$).
 Proposition \ref{k3} furnishes fibrewise correspondences $\Lambda_b, \Psi_b$ fulfilling the assumption of Proposition \ref{spreadprinc}. Thus, it follows from Proposition \ref{spreadprinc} that there exists a relative correspondence $\Psi\in A^3(\XX\times_B \Ss)$ as requested.   
  
  The compatibility with bilinear forms is proven as above: one restricts to a very general fibre, where it must be true by unicity.
       \end{proof}
    
  \begin{remark} We will see later (Remark \ref{later}) that the isomorphism
   \[ H^4_{prim}(X_b)\ \cong\ H^2_{prim}(S_b)(1)\] 
   of Proposition \ref{univ} can be extended to {\em all\/} Verra fourfolds, including those where the associated K3 surfaces are singular.
  \end{remark}

  \section{Various general preliminaries}
  
 \subsection{Alexander schemes} 
 \label{ss:alex}
 
 This subsection gathers some properties of singular surfaces that are Alexander schemes, in the sense of Vistoli and Kimura. These surfaces $S$ have the desirable property that the Chow groups with rational coefficients of $S$ and all its powers $S^m$ behave just as well as Chow groups of smooth varieties.

\begin{proposition}\label{alex} Let $S$ be a normal projective surface, and assume there is a resolution of singularities such that the exceptional divisor is a union of rational curves. Then $S^m$ is an Alexander scheme in the sense of \cite{Vis}, \cite{Kim0}. In particular, the formalism of correspondences with rational coefficients works for $S^m$ (and for products of $S^m$ with smooth projective varieties) just as for smooth projective varieties.
\end{proposition}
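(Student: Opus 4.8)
The plan is to reduce everything to resolution of singularities together with the two structural facts about Alexander schemes established by Vistoli and Kimura: that a smooth quasi-projective variety is an Alexander scheme, and that the Alexander property descends along a sufficiently well-behaved resolution. Write $\pi\colon \wt S\to S$ for the given resolution, with exceptional divisor $E=\bigcup_i E_i$ a union of rational curves. Since $\wt S$ is smooth projective it is an Alexander scheme, and so is every power $\wt S^m$, because a product of smooth projective varieties is again smooth projective. The morphism $\pi^m=\pi\times\cdots\times\pi\colon \wt S^m\to S^m$ is proper, surjective and birational, restricting to an isomorphism over the dense open $U^m$, where $U=S\setminus \pi(E)$ is the smooth locus of $S$.

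First I would isolate the precise cycle-theoretic input furnished by the hypothesis. The role of the condition that the $E_i$ be rational curves is that each is Chow-trivial with $\QQ$-coefficients: $A_\ast(E_i)_\QQ$ is spanned by the class of a point and the fundamental class, exactly as for $\PP^1$. Hence every closed fibre of $\pi$, being a connected union of some of the $E_i$, satisfies $A_0(\,\cdot\,)_\QQ=\QQ$ (all points of such a configuration are rationally equivalent). The fibres of $\pi^m$ over $S^m$ are the products $\pi^{-1}(s_1)\times\cdots\times\pi^{-1}(s_m)$, i.e.\ products of points and of connected unions of rational curves; such products are rationally chain connected and therefore again have $A_0(\,\cdot\,)_\QQ=\QQ$. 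This fibrewise triviality is exactly the sufficient condition isolated by Kimura (building on Vistoli): a variety admitting a resolution by a smooth (hence Alexander) variety whose fibres are $\QQ$-Chow-trivial is itself an Alexander scheme. Applying this criterion to $\pi^m\colon \wt S^m\to S^m$ yields that $S^m$ is an Alexander scheme.

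The ``in particular'' assertion then follows formally. Resolving only the first factor, the product of $S^m$ with a smooth projective variety $Y$ is resolved by $\wt S^m\times Y\to S^m\times Y$, a smooth source with the same $\QQ$-Chow-trivial fibres as before (now crossed with the point fibres over $Y$), so the same criterion shows $S^m\times Y$ is Alexander. Being an Alexander scheme is precisely what guarantees that $A_\ast(S^m)_\QQ$ and $A_\ast(S^m\times Y)_\QQ$ carry a well-defined intersection product and that correspondences compose and act on Chow groups exactly as in the smooth projective case; this is the content of the final sentence.

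The step I expect to be the main obstacle is the passage to the powers $S^m$ rather than the single surface: the exceptional locus of $\pi^m$ is much larger than a union of rational curves — it is an arrangement of products of the $E_i$ with copies of $\wt S$ and with diagonal-type strata — so one cannot merely quote the statement for $S$. The delicate point is to check that, despite this complexity, the hypotheses of Kimura's descent criterion really do concern only the fibrewise $\QQ$-Chow-triviality of $\pi^m$ and are preserved under the product construction, the crucial geometric fact being that a product of rational curves (and of rational curves with points) still has $\QQ$-Chow group of $0$-cycles equal to $\QQ$. Verifying that this suffices to make the Vistoli--Kimura machinery apply verbatim to $\pi^m$ is where the argument requires genuine care.
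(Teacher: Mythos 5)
There is a genuine gap at the central step. Your argument hinges on a ``descent criterion'' that you attribute loosely to Kimura: that a proper birational morphism from a smooth (hence Alexander) variety whose fibres satisfy $A_0(\,\cdot\,)_{\QQ}=\QQ$ forces the target to be an Alexander scheme. No such statement appears in \cite{Vis} or \cite{Kim0} in this form, and it is very unlikely to be correct with only the degree-zero Chow groups of the fibres controlled: the descent conditions characterizing Alexander schemes involve the full Chow groups of the fibres and of fibre products $\wt S^m\times_{S^m}\wt S^m$ (whose strata over the deepest points are products of unions of rational curves with nontrivial $A_1$), together with compatibility with the bivariant/operational structure. You yourself flag the verification that ``the Vistoli--Kimura machinery applies verbatim to $\pi^m$'' as the main obstacle, and the proposal never actually carries it out; as written, the key lemma is asserted rather than proved.

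The paper's route avoids analyzing $\pi^m$ altogether and is much shorter: Vistoli's theorem (\cite[Theorem 4.1]{Vis}) states precisely that a normal surface is an Alexander scheme \emph{if and only if} it has a resolution whose exceptional divisor is a union of rational curves, which settles the case $m=1$ by direct citation; then one invokes the purely formal fact that the Alexander property is stable under products (\cite[Remark 2.7(i)]{Kim0}, with \cite[Corollary 4.5]{Kim0} reconciling the two definitions), which gives $S^m$ and also $S^m\times Y$ for $Y$ smooth projective (smooth varieties being Alexander) without any geometric descent argument. If you want to keep your geometric approach, you would need to locate and verify the actual characterization theorem in Kimura's paper and check its hypotheses for $\pi^m$, which is considerably more work than the two citations the paper uses.
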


\begin{proof} The fact that $S$ is an Alexander scheme is a result of Vistoli's:

\begin{theorem}[Vistoli \cite{Vis}]\label{vis} Let $S$ be a normal surface. The following are equivalent:

\noindent
(\rom1) $S$ is an Alexander scheme;

\noindent
(\rom2) there exists a resolution of singularities of $S$ such that the exceptional divisor is a union of rational curves.
\end{theorem}

\begin{proof} This is 
\cite[Theorem 4.1]{Vis}.
\end{proof}

 Since the property ``being an Alexander scheme'' is stable under products \cite[Remark 2.7(\rom1)]{Kim0}, $S^m$ is an Alexander scheme. (NB: the fact that the definitions of Alexander scheme given in \cite{Vis} and \cite{Kim0} coincide for connected schemes is 
\cite[Corollary 4.5]{Kim0}.)

The fact that products of Alexander schemes are Alexander, and that Chow groups of an Alexander scheme have an intersection product and a pullback, means that the formalism of correspondences can be extended from smooth projective varieties to projective Alexander schemes.
\end{proof}
  
 \begin{remark}\label{alexm} Since the formalism of correspondences (with rational coefficients) extends to Alexander schemes, one may extend the category of pure Chow motives (with rational coefficients) from smooth projective varieties to projective Alexander schemes. We will only use this extension at the end of the proof of Theorem \ref{verrak3}, when dealing with Verra fourfolds whose associated K3 surfaces are singular.
 \end{remark}

\subsection{Transcendental part of the motive}

\begin{theorem}[Kahn--Murre--Pedrini \cite{KMP}]\label{t2} Let $S$ be a surface. There exists a decomposition
  \[ \hh^2(S)= \ttt^2(S)\oplus \hh^2_{alg}(S)\ \ \ \hbox{in}\ \MM_{\rm rat}\ ,\]
  such that
  \[  H^\ast(\ttt^2(S),\QQ)= H^2_{tr}(S)\ ,\ \ H^\ast(\hh^2_{alg}(S),\QQ)=NS(S)_{\QQ}\ \]
  (here $H^2_{tr}(S)$ is defined as the orthogonal complement of the N\'eron--Severi group $NS(S)_{\QQ}$ in $H^2(S,\QQ)$),
  and
   \[ A^\ast(\ttt^2(S))_{}=A^2_{AJ}(S)_{}\ .\]
   (The motive $\ttt^2(S)$ is called the {\em transcendental part of the motive\/}.)
   \end{theorem}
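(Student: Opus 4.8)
The plan is to start from a Chow--Künneth decomposition of the surface and then refine the degree-$2$ projector by splitting off its algebraic part. By Murre's theorem, any surface $S$ admits a self-dual Chow--Künneth decomposition $\{\pi_0,\dots,\pi_4\}$: orthogonal idempotents in $A^2(S\times S)$ summing to the diagonal $\Delta_S$ and realizing the Künneth components of $\Delta_S$ in cohomology, with the normalization that $(\pi_1)_\ast A^1(S)=A^1_{hom}(S)$ (the $\pic^0$ part) and that $(\pi_4)_\ast$, $(\pi_3)_\ast$ pick out the degree and the Albanese image of $A^2(S)$. Writing $\hh^2(S)=(S,\pi_2,0)$, the aim is to split $\pi_2=\pi^2_{alg}+\pi^2_{tr}$ into orthogonal idempotents whose realizations are $NS(S)_\QQ$ and $H^2_{tr}(S)$.

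To build the algebraic piece, I would choose divisors $D_1,\dots,D_\rho$ whose classes form a basis of $NS(S)_\QQ$. By the Hodge index theorem the intersection matrix $(D_i\cdot D_j)$ is non-degenerate; letting $(q_{ij})$ be its inverse, set
\[ \pi^2_{alg}:=\sum_{i,j} q_{ij}\, D_i\times D_j\ \ \in\ A^2(S\times S)\ . \]
A direct computation, using that the composition of correspondences of the form $D_i\times D_j$ reduces to the intersection pairing on $NS(S)_\QQ$, shows that $\pi^2_{alg}$ is idempotent (here one uses precisely that $(q_{ij})$ inverts the intersection matrix) and, after arranging the $D_i$ to lie in the range of $\pi_2$, that it is a sub-projector of $\pi_2$ orthogonal to $\pi_0,\pi_1,\pi_3,\pi_4$. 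One then sets $\pi^2_{tr}:=\pi_2-\pi^2_{alg}$, which is automatically an idempotent orthogonal to the remaining projectors, and defines $\ttt^2(S):=(S,\pi^2_{tr},0)$ and $\hh^2_{alg}(S):=(S,\pi^2_{alg},0)$. On cohomology, the classes $[D_i]$ span $NS(S)_\QQ\subset H^2(S)$ with cup-product pairing given by $(D_i\cdot D_j)$, so $\pi^2_{alg}$ acts as the orthogonal projector onto $NS(S)_\QQ$ and $\pi^2_{tr}$ as the projector onto its orthogonal complement $H^2_{tr}(S)$; this yields $H^\ast(\hh^2_{alg}(S))=NS(S)_\QQ$ and $H^\ast(\ttt^2(S))=H^2_{tr}(S)$.

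The substantial point is the computation of $A^\ast(\ttt^2(S))=(\pi^2_{tr})_\ast A^\ast(S)$. For the fundamental class and for divisors the vanishing is soft: since $\ttt^2(S)$ realizes cohomology concentrated in degree $2$, the cycle class map kills $A^0(\ttt^2(S))$ and forces $A^1(\ttt^2(S))$ to consist of homologically trivial classes; as $A^1_{hom}(S)$ is annihilated by $(\pi_2)_\ast$ (Murre's normalization) and $\pi^2_{tr}$ factors through $\pi_2$, one gets $A^0(\ttt^2(S))=A^1(\ttt^2(S))=0$. Concretely one checks $(\pi^2_{alg})_\ast$ vanishes on $A^0(S)$ and $A^2(S)$ for dimension reasons, while $(\pi^2_{alg})_\ast D=\sum_{i,j}q_{ij}(D\cdot D_i)D_j$ is exactly the projection onto $NS(S)_\QQ$. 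It therefore remains to identify $(\pi^2_{tr})_\ast A^2(S)=(\pi_2)_\ast A^2(S)$ with $A^2_{AJ}(S)$.

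This last identification is the main obstacle and carries all the genuine content. It depends on Murre's delicate construction of $\pi_3$ (via a curve, or a polarization, computing the Albanese) arranged so that $(\pi_3)_\ast A^2(S)$ is precisely the Albanese image $A^2_{hom}(S)/A^2_{AJ}(S)$ and $(\pi_4)_\ast A^2(S)=\QQ$; once these two layers are removed, what survives in $(\pi_2)_\ast A^2(S)$ is exactly the Albanese kernel $A^2_{AJ}(S)$. Establishing this rests on the Bloch--Murre analysis of the action of the Chow--Künneth projectors on the deepest part of the Chow group of $0$-cycles, and is where the real work lies; the remaining verifications (idempotency, orthogonality, the two cohomological realizations, and the vanishing of $A^0,A^1$) are formal by comparison.
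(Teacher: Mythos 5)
Your construction is correct and follows essentially the same route as the source the paper relies on: Theorem \ref{t2} is quoted as a black box from Kahn--Murre--Pedrini \cite{KMP} with no proof given in the paper, and your splitting of $\pi_2$ via $\pi^2_{alg}=\sum_{i,j} q_{ij}\,D_i\times D_j$ (with $(q_{ij})$ the inverse of the intersection matrix on $NS(S)_{\QQ}$ and the $D_i$ normalized to lie in the image of $(\pi_2)_\ast$), together with the reduction of $A^\ast(\ttt^2(S))=A^2_{AJ}(S)$ to Murre's computation of how $\pi_2,\pi_3,\pi_4$ act on $A^2(S)$, is precisely how \cite{KMP} proceed. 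You also correctly locate the genuine content in Murre's refined Chow--K\"unneth decomposition for surfaces rather than in the formal splitting of $\pi_2$.
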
 

\begin{proposition} Let $X$ be a Verra fourfold. There exists a decomposition
  \[ \hh(X)= \ttt^4(X)\oplus \hh^4_{alg}(X)\oplus \bigoplus_{0\le j\le 4, j\not=2} \mathds{1}(j)\ \ \ \hbox{in}\ \MM_{\rm rat}\ , \]
such that
    \[  H^\ast(\ttt^4(X),\QQ)= H^4_{tr}(X)\ ,\ \ H^\ast(\hh^4_{alg}(X),\QQ)=N^{ 2}(X):=\ima\bigl( A^{ 2}(X)\to H^4(X,\QQ)\bigr)\ ,\]
  and
   \[ A^\ast(\ttt^4(X))_{}=A^\ast_{hom}(X)_{} = A^{3}_{hom}(X)  \ .\]
   (The motive $\ttt^4(X)$ will be called the {\em transcendental part of the motive\/}.)
\end{proposition}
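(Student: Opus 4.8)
The plan is to write down an explicit Chow--K\"unneth decomposition of $X$, refine its degree-$4$ piece into a transcendental and an algebraic part, and then read off the Chow groups of the transcendental summand. First I would construct the projectors $\pi^0,\pi^2,\pi^6,\pi^8$ from algebraic cycles. By Corollary \ref{hodgediam} and its proof (weak Lefschetz applied to $X\hookrightarrow\bar C$), the odd cohomology of $X$ vanishes and $H^{2j}(X)$ for $j\neq 2$ is generated by the two divisor classes $h_1,h_2$ pulled back from the two factors $\PP^2$. Since $h_1^3=h_2^3=0$ while $\int_X h_1^2h_2^2\neq 0$, the Poincar\'e pairing $H^2(X)\times H^6(X)\to\QQ$ is non-degenerate with explicit dual bases, and one sets for instance $\pi^2=\tfrac12\bigl(h_1h_2^2\times h_1+h_1^2h_2\times h_2\bigr)$, $\pi^6={}^t\pi^2$, $\pi^0=x_0\times X$, $\pi^8=X\times x_0$. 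A direct computation of intersection numbers (the dual-basis relations) shows these are genuine, pairwise orthogonal idempotents in $A^\ast(X\times X)$ realizing the K\"unneth projectors onto $H^0,H^2,H^6,H^8$; the corresponding summands are Tate motives realizing these groups.

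Next I would refine the middle. Set $\pi^4:=\Delta_X-\pi^0-\pi^2-\pi^6-\pi^8$. Because the Hodge conjecture holds for $X$ (Lemma \ref{fano}), $N^2(X)$ is the full space of Hodge classes in $H^4(X)$, and by the Hodge--Riemann relations the cup product is non-degenerate on it, so $H^4(X)=N^2(X)\oplus H^4_{tr}(X)$ is an orthogonal decomposition. Choosing a basis $Z_1,\dots,Z_\rho$ of $N^2(X)$ of algebraic cycles with invertible Gram matrix $M=(\int_X Z_iZ_j)$, I set $\pi^4_{alg}:=\sum_{i,j}(M^{-1})_{ij}\,Z_i\times Z_j$ and $\pi^4_{tr}:=\pi^4-\pi^4_{alg}$. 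An intersection-number computation shows $\pi^4_{alg}$ is idempotent, acts as the identity on $N^2(X)$ and as zero on $H^4_{tr}(X)$, and is orthogonal to the previous projectors (the mixed compositions vanish for degree reasons). Putting $\hh^4_{alg}(X):=(X,\pi^4_{alg})$ and $\ttt^4(X):=(X,\pi^4_{tr})$, one gets $\hh^4_{alg}(X)\cong\mathds{1}(2)^{\oplus\rho}$ with $H^\ast(\hh^4_{alg}(X))=N^2(X)$ and $H^\ast(\ttt^4(X))=H^4_{tr}(X)$, which yields the asserted decomposition of $\hh(X)$ (the Tate summands occurring with the multiplicities prescribed by the Betti numbers of Corollary \ref{hodgediam}).

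It remains to compute $A^\ast(\ttt^4(X))$. First, $A^\ast(\ttt^4(X))$ is homologically trivial: for $i\neq 2$ one has $H^{2i}(\ttt^4(X))=0$, while for $i=2$ the cycle class map lands in $N^2(X)\cap H^4_{tr}(X)=0$. Since the complementary summands (the Tate motives and $\hh^4_{alg}(X)\cong\mathds{1}(2)^{\oplus\rho}$) have Chow groups injecting into cohomology, every homologically trivial class of $X$ is captured by $\pi^4_{tr}$, so $A^\ast(\ttt^4(X))=A^\ast_{hom}(X)$. Finally I would show $A^i_{hom}(X)=0$ for $i\neq 3$: the cases $i=0,4$ follow from $A^4(X)=\QQ$ (Lemma \ref{fano}), the case $i=1$ from $H^1(X)=0$ (whence $\pic^0(X)=0$), and the case $i=2$ from the Bloch--Srinivas argument \cite{BS} --- since $A_0(X)=\QQ$ the Abel--Jacobi map $A^2_{hom}(X)\to J^3(X)$ is injective, and $H^3(X)=0$ forces $J^3(X)=0$. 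This gives $A^\ast(\ttt^4(X))=A^\ast_{hom}(X)=A^3_{hom}(X)$.

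The formal construction and the cohomological realizations are routine bookkeeping; the genuine content, and the step I expect to be the main obstacle, is the concentration of $A^\ast_{hom}(X)$ in codimension $3$, whose crux is the vanishing $A^2_{hom}(X)=0$. This is the only place where the global geometry of $X$ really enters --- through its rational connectedness (giving $A_0(X)=\QQ$) combined with $H^3(X)=0$ --- rather than through formal properties of correspondences.
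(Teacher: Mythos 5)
Your proof is correct and follows precisely the ``standard construction'' that the paper invokes only by reference (to Pedrini's treatment of cubic fourfolds and Vial's projectors on the algebraic part of cohomology): completely decomposed projectors built from $h_1,h_2$ and a point for the outer cohomology, a Gram--matrix projector onto $N^2(X)$ split off from $\pi^4$, and Bloch--Srinivas together with $H^1(X)=H^3(X)=0$ and $A^4(X)=\QQ$ to concentrate $A^\ast_{hom}(X)$ in codimension $3$. You have simply written out in full the argument the paper leaves to the cited sources, and every step (including the non-degeneracy of the intersection form on $N^2(X)$ via the Hodge--Riemann relations) checks out.
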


\begin{proof} This is a standard construction (cf. \cite[Section 4]{Pedr}, where this decomposition is constructed for cubic fourfolds. Cf. also \cite[Theorem 1]{V5}, where projectors on the algebraic part of cohomology are constructed for any smooth projective variety satisfying the standard conjectures).

\end{proof}

\subsection{Chow--K\"unneth property}

\begin{definition}[Totaro \cite{T}] A quasi-projective variety $M$ has the {\em Chow--K\"unneth property\/} if for any quasi--projective variety $Z$, the natural map
  \[  A_\ast(M)\otimes A_\ast(Z)\ \to\ A_\ast(M\times Z) \]
  is an isomorphism.
  \end{definition}

\begin{proposition}[Totaro \cite{T}]\label{propck}

\noindent
\item{
(\rom1)} Let $M$ be a quasi-projective variety that is a {\em linear variety\/} in the sense of \cite{T} (in particular, $M$ may be a Grassmannian, a toric or spherical variety). Then $M$ has the Chow--K\"unneth property;

\noindent
\item{
(\rom2)} Let $M\to N$ be an affine or projective bundle. If $N$ has the Chow--K\"unneth property, then also $M$ has the Chow--K\"unneth property;

\noindent
\item{(\rom3)} Let $M$ be a quasi-projective variety with the Chow--K\"unneth property. The cycle class maps induce isomorphisms
 \[                        A_i(M)\   \xrightarrow{\cong}\ W_{-2i} H^{BM}_{2i}(M,\QQ)\ , 
                        \]
                       where $W_\ast$ denotes Deligne's weight filtration.
\end{proposition}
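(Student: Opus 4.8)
The plan is to split the three parts into two tasks: part (\rom2) by a direct transfer argument, and parts (\rom1) and (\rom3) together by a single induction along the stratification of a linear variety.

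For part (\rom2) I would argue as follows. If $M\to N$ is an affine bundle of relative dimension $r$, homotopy invariance of Chow groups \cite[\S 3]{F} shows that flat pullback is an isomorphism $A_{i-r}(N)\xrightarrow{\cong}A_i(M)$; since the base-changed projection $M\times T\to N\times T$ is again an affine bundle, the Künneth maps for $M$ and for $N$ are identified up to the common shift by $r$, so the Chow--K\"unneth property passes from $N$ to $M$. If instead $M=\PP(E)\to N$ is a projective bundle, the projective bundle formula \cite[\S 3]{F} expresses $A_\ast(M)$ as a direct sum of shifted copies of $A_\ast(N)$ via powers of the relative hyperplane class, and likewise for $M\times T$, compatibly with the respective Künneth maps; the property again transfers. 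Both input formulae are valid over an arbitrary (possibly singular) base, which is what is needed here.

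For parts (\rom1) and (\rom3), recall that a linear variety is built inductively from affine spaces by means of the closed--open (localization) decomposition. The base case $M=\A^n$ is immediate: $A_i(\A^n)$ and $W_{-2i}H^{BM}_{2i}(\A^n)$ are both $\QQ$ for $i=n$ and vanish otherwise, the cycle class map matching them, while the Chow--K\"unneth property for $\A^n$ is exactly homotopy invariance. For the inductive step one takes a closed linear subvariety $Z\subset M$ with open linear complement $U$ and compares the right-exact localization sequence for Chow groups with the long exact localization sequence for Borel--Moore homology. The key point is that the latter is a sequence of morphisms of mixed Hodge structures, hence strict for the weight filtration; applying $\gr^W_{-2i}$ therefore breaks the long exact sequence into short exact sequences that precisely mirror the Chow localization sequence. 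Combined with the inductive hypothesis for $Z$ and $U$, a five-lemma argument yields the cycle class isomorphism (\rom3) for $M$. For (\rom1) one tensors the Chow localization sequence with $A_\ast(T)$, for an arbitrary test variety $T$, and compares it with the localization sequence of $M\times T$; surjectivity of the Künneth map for $M$ follows by right-exactness and the five lemma from the property for $Z$ and $U$, and injectivity is forced by (\rom3), which embeds each Chow group into the lowest weight-graded piece of Borel--Moore homology, where the genuine Künneth formula applies.

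The main obstacle is precisely this clash between the merely right-exact localization sequence for Chow groups and the long exact sequence for Borel--Moore homology: the induction cannot be run on Chow groups in isolation. The resolution is to carry the weight filtration through the induction, so that strictness converts the Borel--Moore sequence into short exact sequences matching the Chow sequence; this is why (\rom1) and (\rom3) must be established simultaneously. For the application at hand all ambient spaces that occur---products of projective spaces, Grassmannians, toric and spherical varieties---fall under (\rom1), while the relevant affine and projective cone constructions are covered by (\rom2).
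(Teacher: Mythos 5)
Your part (\rom2) is correct and is exactly what the paper does: its entire proof of (\rom2) is the observation that the affine resp.\ projective bundle formula identifies the K\"unneth map for $M$ with (a shifted sum of copies of) the one for $N$. For parts (\rom1) and (\rom3), however, the paper does not reprove anything --- it cites \cite{T} (Proposition 1 and Theorem 3 there) --- and your attempted reconstruction of Totaro's arguments has a genuine gap at both injectivity statements.

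For (\rom3): strictness does give exactness of the weight-truncated Borel--Moore sequence, and $W_{-2i}H^{BM}_{2i-1}(Z)=0$ kills the term on the right, but the truncated sequence does \emph{not} break into short exact sequences mirroring the Chow localization sequence: the term $W_{-2i}H^{BM}_{2i+1}(U)$ on the left survives (already for $U=\mathbb{G}_m$ one has $W_{0}H^{BM}_{1}(\mathbb{G}_m)=\QQ\neq 0$, and the connecting map onto $H^{BM}_0(\{0\})$ is an isomorphism), and it has no counterpart in the three-term right-exact Chow sequence. A diagram chase therefore yields only \emph{surjectivity} of $A_i(M)\to W_{-2i}H^{BM}_{2i}(M)$; the five lemma cannot give injectivity of the middle map from two right-exact rows, because $a\in A_i(M)$ with trivial class may come from $z\in A_i(Z)$ whose cycle class lies in the image of the connecting map, and nothing in your setup lets you replace $z$ by an element of $\ker\bigl(A_i(Z)\to A_i(M)\bigr)$. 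Filling this in is precisely the technical heart of Totaro's proof: one needs the localization sequence for Bloch's higher Chow groups together with his control of $CH_\ast(X,1)$ for linear schemes, so that a higher cycle class map surjects onto $W_{-2i}H^{BM}_{2i+1}(U)$. The injectivity half of (\rom1) has the same problem, compounded by another: you propose to deduce it from (\rom3) by pushing $A_\ast(M)\otimes A_\ast(T)$ into Borel--Moore homology of $M\times T$, but $T$ is an \emph{arbitrary} quasi-projective variety and $A_\ast(T)\to W_\ast H^{BM}_\ast(T)$ is far from injective (consider $A_0$ of a curve of positive genus), so this composite cannot detect injectivity of the Chow K\"unneth map. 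Since the paper treats (\rom1) and (\rom3) as black boxes from \cite{T}, the safe course is to do the same rather than re-derive them with the tools you allow yourself here.
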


\begin{proof} Point (\rom1) is \cite[Proposition 1]{T}. Point (\rom2) follows readily from the affine resp. projective bundle formula. Point (\rom3) is
\cite[Theorem 3]{T}.
\end{proof}

\begin{remark} Property (\rom3) shows that the Chow--K\"unneth property implies ``having trivial Chow groups'', a notion studied by Voisin \cite{V0}, \cite{V1}. 
While the notion of ``having trivial Chow groups'' is well-behaved for smooth projective varieties, it is more problematic for open or singular varieties (e.g., it is not clear whether ``having trivial Chow groups'' is closed under taking products of varieties). The Chow--K\"unneth property avoids some of these issues (clearly, it is closed under taking products of varieties).
\end{remark}

\subsection{Voisin's spread, revisited}

This subsection contains a variant of a result of Voisin \cite{V1}, concerning relative correspondences for the universal family of complete intersections of a certain type.  The novelty 
consists in allowing the ambient space to have isolated singular points; this notably applies to Verra fourfolds where the ambient space is a cone.

\begin{proposition}[Voisin \cite{V1}]\label{spread} Let $\bar{M}$ be a projective variety of dimension $n+r$ with finite-dimensional singular locus $T$, and let $M:=\bar{M}\setminus T$. 
Given very ample line bundles $L_1,\ldots,L_r$ on $\bar{M}$, let 
  \[ \XX\ \to\ B \]
  denote the universal family of $n$-dimensional smooth complete intersections of type $(L_1,\ldots,L_r)$ on $\bar{M}$ avoiding $T$. Assume that for very general $b\in B$,
  the fibre $X_b$ has non-zero primitive cohomology (i.e. $H^n(M)\to H^n(X_b)$ is not surjective).

Let
  \[ R\in\ \ A^n(\XX\times_B \XX) \]
  be a cycle such that the restriction
  \[  R\vert_{X_b\times X_b}\ \ \in\ A^n(X_b\times X_b) \]
  is homologically trivial, for very general $b\in B$. Then there exists
  \[ \gamma\in\ \   A^n(M\times M) \]
  with the property that
   \[   \bigl( R+\gamma\bigr)\vert_{X_b\times X_b}=0\ \ \in\ A^n(X_b\times X_b)\ \ \ \forall b\in B\ . \]
\end{proposition}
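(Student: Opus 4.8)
The plan is to prove Proposition \ref{spread} by adapting Voisin's spread argument \cite{V1}, \cite{Vo} to the mildly singular ambient setting. The statement says: given a relative self-correspondence $R$ on the universal family $\XX\to B$ of complete intersections whose fibrewise restriction is homologically trivial for very general $b$, we can correct $R$ by a cycle $\gamma$ coming from the ambient $M\times M$ so that the corrected correspondence vanishes in $A^n(X_b\times X_b)$ \emph{rationally}, for \emph{all} $b\in B$. The heart of the matter is to pass from ``fibrewise homologically trivial for very general $b$'' to ``fibrewise rationally trivial for all $b$'', using the universal total space.

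\emph{First} I would spread out the fibrewise homological triviality. Over the locus of very general $b$, $R\vert_{X_b\times X_b}$ is homologically trivial; by a standard Hilbert-scheme / Noether--Lefschetz argument the locus where this holds is a countable intersection of open sets, and in fact (after allowing the correction $\gamma$) one can arrange homological triviality along the whole base. The mechanism is Voisin's: the total-space cycle $R$ restricts to something homologically trivial on the generic fibre of $\XX\to B$, so $R$ lies, up to a cycle pulled back from $M\times M$, in the kernel of the cycle class map on the generic fibre. \emph{Next}, the key input is that $\XX\times_B\XX$ itself, or rather the relevant complement, has controlled cohomology. Because $M=\bar M\setminus T$ with $T$ finite-dimensional, and the fibres are complete intersections avoiding $T$, the Lefschetz-type results (weak Lefschetz for lci ambient spaces, as already invoked in the proof of Corollary \ref{hodgediam}, via \cite{GNPP}) control $H^\ast(X_b)$ in terms of $H^\ast(M)$ outside the middle degree. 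This lets one isolate the primitive (middle-degree) part, which is exactly where a nonzero correspondence can survive.

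\emph{The technical core} is the ``spreading out'' of rational equivalence. Following \cite[Proposition 3.2]{V1} (resp. the treatment in \cite{Vo}), one uses that $R$ is homologically trivial on the very general fibre to conclude, via a Leray / decomposition-of-the-diagonal argument applied to the family, that after subtracting a cycle $\gamma$ supported over $M\times M$ the class $(R+\gamma)\vert_{X_b\times X_b}$ dies in the local system $R^{2n}\pi_\ast\QQ$; then a Hodge-theoretic argument (using that the primitive part is of the expected Hodge type and that the fibrewise cycle is homologically trivial) promotes this to vanishing in $A^n(X_b\times X_b)_{\QQ}$ for the very general, and hence by specialization for \emph{all}, $b\in B$. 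The role of $\gamma\in A^n(M\times M)$ is precisely to absorb the non-primitive contributions, which are governed by cohomology of the ambient $M$ and therefore ``constant in the family.''

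\emph{The main obstacle} will be the isolated singularities of $\bar M$. In Voisin's original setting $\bar M$ is smooth, so that $M\times M$ and the total space $\XX\times_B\XX$ are smooth and the full formalism of correspondences and the weak Lefschetz theorem apply directly. Here $\bar M$ is a cone over $\PP^2\times\PP^2$ with vertex in the finite-dimensional $T$; one must check that removing $T$ and working on the smooth open $M$ is harmless. Concretely, I would verify that (a) the fibres $X_b$ avoid $T$, so they are smooth and the correspondence formalism on $X_b\times X_b$ is unproblematic; (b) the weak Lefschetz theorem still holds for $X_b\hookrightarrow\bar M$ in the lci setting, which is guaranteed by \cite[Expos\'e III Corollaire 3.11(\rom3)]{GNPP} exactly as in Corollary \ref{hodgediam}; and (c) the spread argument only ever manipulates cycles on the smooth locus and on the smooth fibres, so that the potential failure of the intersection-product formalism at the vertex never intervenes, with any ambient correction cycle $\gamma$ taken on $M\times M$ rather than $\bar M\times\bar M$. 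Once these three points are in place, the remainder of the argument is Voisin's verbatim, and I would cite \cite{V1}, \cite{Vo} for the detailed Hilbert-scheme bookkeeping rather than reproduce it.
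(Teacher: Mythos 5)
There is a genuine gap at the technical core of your proposal. The conclusion of the proposition is a \emph{rational} equivalence $(R+\gamma)\vert_{X_b\times X_b}=0$ in $A^n(X_b\times X_b)$, and your mechanism for reaching it is that, after the cohomological vanishing in the local system is arranged, ``a Hodge-theoretic argument \dots promotes this to vanishing in $A^n(X_b\times X_b)_{\QQ}$.'' No such promotion exists: homological triviality of a cycle does not imply its rational triviality, and no Hodge-theoretic input can bridge that gap (that would amount to a converse of the cycle class map's injectivity, which is precisely the kind of statement one is trying to \emph{prove} in this circle of ideas, not something one may invoke). Your ``and hence by specialization for all $b$'' also runs in the wrong direction for the same reason. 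As written, the argument establishes at best a cohomological statement, not the Chow-theoretic one.

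The actual mechanism (Voisin's, reproduced in the paper) is structural and does not pass through cohomology of the fibres at all until the very last step. One forms the blow-up $\wt{M\times M}$ of $M\times M$ along the diagonal and the incidence variety $I\subset\bar B\times\wt{M\times M}$ of pairs (section, length-$2$ subscheme contained in its zero locus). Very ampleness of the $L_j$ makes $I\to\wt{M\times M}$ a \emph{projective bundle}, so by the projective bundle and blow-up formulas \emph{every} cycle $\bar R\in A^n(I)$ decomposes as $\sum_{r,s}h^r\delta^s\pi^\ast(\gamma_{r,s})$ with $\gamma_{r,s}\in A^{n-r-s}(M\times M)$ --- with no hypothesis on $R$ whatsoever. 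Extending $p^\ast R$ from the open subset $\wt{\XX\times_B\XX}\subset I$ to such a $\bar R$ and pushing down to a fibre, only two terms survive: $\gamma_{0,0}\vert_{X_b\times X_b}$ and a multiple $\gamma_{0,n}\Delta_{X_b}$ of the diagonal, and this identity already holds in $A^n(X_b\times X_b)$ for \emph{all} $b$. The homological triviality hypothesis is used only once, to force the scalar $\gamma_{0,n}$ to vanish: otherwise $\Delta_{X_b}$ would be homologically a cycle coming from $M\times M$, so $H^n(X_b)$ would be exhausted by $H^n(M)$, contradicting the non-vanishing of primitive cohomology. Your points (a)--(c) about the singular ambient space are sensible and do match the paper's concerns, but without the projective-bundle decomposition of $A^\ast(I)$ the proof does not go through.
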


\begin{proof} This is (essentially) the argument of \cite[Proposition 1.6]{V1} (cf. also \cite[Proposition 5.1]{LNP} for a similar statement). Since the assumptions are different, we provide a stand-alone proof.

We consider the blow-up $\wt{{M}\times{M}}$ of ${M}\times{M}$ along the diagonal, and the quotient morphism $\mu\colon\wt{{M}\times{M}}\to M^{[2]}$ to the Hilbert scheme of length $2$ subschemes. Let $\bar{B}:=\PP H^0(\bar{M},\oplus_j L_j)$
and as in \cite[Lemma 1.3]{V1}, introduce the incidence variety
      \[ I:=\bigl\{ (\sigma,y)\in \bar{B}\times\wt{{M}\times{M}}\ \vert\ s\vert_{\mu(y)}=0\bigr\}\ .\]
Since the $L_j$ are very ample on $\bar{M}$, the variety $I$ has the structure of a projective bundle over $\wt{M\times M}$. 

Next, let us consider 
  \[ p\colon\ \ \wt{\XX\times_B \XX}\ \to\ \XX\times_B \XX\ ,\]
  the blow-up along the relative diagonal $\Delta_\XX$. There is an open inclusion $\wt{\XX\times_B \XX}\subset I$.
  Hence, given $R\in A^n(\XX\times_B \XX)$ as in the proposition, there exists a (non-canonical) cycle
  $\bar{R}\in A^n(I)$ such that
  \[  \bar{R}\vert_{\wt{\XX\times_B \XX}}= p^\ast(R)\ \ \ \hbox{in}\ A^n({\wt{\XX\times_B \XX}})\ .\]
  Hence, we have
  \[ \bar{R}\vert_{\wt{X_b\times X_b}}= \bigl(p^\ast(R)\bigr)\vert_{\wt{X_b\times X_b}}= (p_b)^\ast(R\vert_{\wt{X_b\times X_b}})    =0\ \ \ \hbox{in}\ H^{2n}(    \wt{X_b\times X_b})\ ,\]
  for $b\in B$ very general, by assumption on $R$. (Here, as one might guess, $p_b\colon \wt{X_b\times X_b}\to X_b\times X_b$ denotes the blow-up along the diagonal $\Delta_{X_b}$.)
  
  Adhering to the notation of \cite[Proof of proposition 1.6]{V1}, let $\delta\in A^1(I)$ be the restriction of the exceptional divisor of $\wt{M\times M}$, and let $h\in A^1(I)$ denote the tautological class with respect to the projective bundle structure. Combining the blow-up formula and the projective bundle formula, the cycle $\bar{R}\in A^n(I)$ can be written
  \[ \bar{R}=\sum_{r,s} h^r \delta^s \pi^\ast(\gamma_{r,s})\ ,\ \ \ \gamma_{r,s}\ \ \in A^{n-r-s}(M\times M) \ ,\]
  where $\pi\colon I\to M\times M$ denotes the composition of projection and blow-up morphism.
  Let us look at the restriction of $\bar{R}$ to the fibres. On the one hand, by assumption on $R$, for $b\in B$ very general we have that
   \begin{equation}\label{assu}  (p_b)_\ast (\bar{R}\vert_{\wt{X_b\times X_b}})=0\ \ \ \hbox{in}\ H^{2n}(X_b\times X_b)\ .\end{equation}
   On the other hand, since (for all $b\in B$)
    \[ (p_b)_\ast (\delta_b^k)=\begin{cases} 0\ \ \ \hbox{in}\ A^n(X_b\times X_b)\ ,&\hbox{if}\ 0<k<n\ ,\\
                (-1)^{n-1}\Delta_{X_b}\ \ \ \hbox{in}\ A^n(X_b\times X_b)\ ,&\hbox{if}\ k=n\ ,\\
                \end{cases}\]
          we find that
        \[ (p_b)_\ast (\bar{R}\vert_{\wt{X_b\times X_b}})= \pm\gamma_{0,n}\Delta_{X_b}+ \gamma_{0,0}\vert_{{X_b\times X_b}}\ \ \ \hbox{in}\ A^n(X_b\times X_b)\ \ \ \forall b\in B\ , \]
        where $\gamma_{0,n}\in A^0(M\times M)\cong\QQ$ is some rational number.
        
      Equality (\ref{assu}) forces $\gamma_{0,n}$ to be zero. (Indeed, supposing $\gamma_{0,n}\not=0$, one would have a homological equivalence
      \begin{equation}\label{hom} \Delta_{X_b}=\pm{1\over \gamma_{0,n}} \,   \gamma_{0,0}\vert_{{X_b\times X_b}}  \ \ \ \hbox{in}\ H^{2n}(X_b\times X_b)\ ,\end{equation}
      for very general $b\in B$.
      Taking $N\supset M$ a smooth compactification of $M$, one can write $\gamma_{0,0}\vert_{X_b\times X_b}  =\bar{\gamma_{0,0}}\vert_{{X_b\times X_b}}$ for some
      $\bar{\gamma_{0,0}}\in A^n(N\times N)$. But then, letting both sides of (\ref{hom}) act on $H^n(X_b)$, one would have that $H^n(X_b)$ comes from $H^n(N)$, hence from $H^n(M)$, contradicting the assumption on primitive cohomology.)
    \end{proof}

\subsection{Multiplicative Chow--K\"unneth decomposition}
\label{ss:mck}

	\begin{definition}[Murre \cite{Mur}]\label{ck} Let $X$ be a smooth projective
		variety of dimension $n$. We say that $X$ has a 
		{\em CK  decomposition\/} if there exists a decomposition of the
		diagonal
		\[ \Delta_X= \pi^0_X+ \pi^1_X+\cdots +\pi^{2n}_X\ \ \ \hbox{in}\
		A^n(X\times X)\ ,\]
		such that the $\pi^i_X$ are mutually orthogonal idempotents and
		$(\pi^i_X)_\ast H^\ast(X)= H^i(X)$.
		Given a CK decomposition for $X$, we set 
		$$A^i(X)_{(j)} := (\pi_X^{2i-j})_\ast A^i(X).$$
		The CK decomposition is said to be {\em self-dual\/} if
		\[ \pi^i_X = {}^t \pi^{2n-i}_X\ \ \ \hbox{in}\ A^n(X\times X)\ \ \ \forall
		i\ .\]
		(Here ${}^t \pi$ denotes the transpose of a cycle $\pi$.)
		
		  (NB: ``CK decomposition'' is short-hand for ``Chow--K\"unneth
		decomposition''.)
	\end{definition}
	
	\begin{remark} \label{R:Murre} The existence of a Chow--K\"unneth decomposition
		for any smooth projective variety is part of Murre's conjectures \cite{Mur},
		\cite{MNP}. 
		It is expected that for any $X$ with a CK
		decomposition, one has
		\begin{equation*}\label{hope} A^i(X)_{(j)}\stackrel{??}{=}0\ \ \ \hbox{for}\
		j<0\ ,\ \ \ A^i(X)_{(0)}\cap A^i_{num}(X)\stackrel{??}{=}0.
		\end{equation*}
		These are Murre's conjectures B and D, respectively.
	\end{remark}

	\begin{definition}[Definition 8.1 in \cite{SVfourier}]\label{mck} Let $X$ be a
		smooth
		projective variety of dimension $n$. Let $\Delta_X^{sm}\in A^{2n}(X\times
		X\times X)$ be the class of the small diagonal
		\[ \Delta_X^{sm}:=\bigl\{ (x,x,x) : x\in X\bigr\}\ \subset\ X\times
		X\times X\ .\]
		A CK decomposition $\{\pi^i_X\}$ of $X$ is {\em multiplicative\/}
		if it satisfies
		\[ \pi^k_X\circ \Delta_X^{sm}\circ (\pi^i_X\otimes \pi^j_X)=0\ \ \ \hbox{in}\
		A^{2n}(X\times X\times X)\ \ \ \hbox{for\ all\ }i+j\not=k\ .\]
		In that case,
		\[ A^i(X)_{(j)}:= (\pi_X^{2i-j})_\ast A^i(X)\]
		defines a bigraded ring structure on the Chow ring\,; that is, the
		intersection product has the property that 
		\[  \ima \Bigl(A^i(X)_{(j)}\otimes A^{i^\prime}(X)_{(j^\prime)}
		\xrightarrow{\cdot} A^{i+i^\prime}(X)\Bigr)\ \subseteq\ 
		A^{i+i^\prime}(X)_{(j+j^\prime)}\ .\]
		
		(For brevity, we will write {\em MCK decomposition\/} for ``multiplicative Chow--K\"unneth decomposition''.)
	\end{definition}

\begin{remark}	The property of having an MCK decomposition is
	severely restrictive, and is closely related to Beauville's ``(weak) splitting
	property'' \cite{Beau3}. For more ample discussion, and examples of varieties
	admitting an MCK decomposition, we refer to
	\cite[Chapter 8]{SVfourier}, as well as \cite{V6}, \cite{SV},
	\cite{FTV}, \cite[Sections 5 and 6]{FV}, \cite{LV}.
	\end{remark}

	\begin{proposition}[Shen--Vial \cite{SV}]\label{product} Let $M,N$ be smooth projective varieties that have an MCK decomposition. Then the product $M\times N$ has an MCK decomposition.
\end{proposition}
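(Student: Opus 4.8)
The plan is to equip $M\times N$ with the \emph{product} Chow--K\"unneth decomposition and then check that multiplicativity is inherited factor by factor. Writing $\{\pi^i_M\}$ and $\{\pi^j_N\}$ for the given MCK decompositions, I would set
\[ \pi^k_{M\times N} := \sum_{i+j=k} \pi^i_M \otimes \pi^j_N \ \in\ A^{\dim M + \dim N}\bigl((M\times N)\times(M\times N)\bigr)\ , \]
where $\otimes$ denotes the external product of correspondences, taken after the canonical reordering of factors $M\times M \times N\times N \cong (M\times N)\times (M\times N)$. First I would verify that this is a genuine CK decomposition: orthogonality of the $\pi^k_{M\times N}$ follows from the identity $(\alpha^\prime\otimes\beta^\prime)\circ(\alpha\otimes\beta) = (\alpha^\prime\circ\alpha)\otimes(\beta^\prime\circ\beta)$ together with the orthogonality of the factors; the relation $\sum_k \pi^k_{M\times N} = \Delta_M \otimes \Delta_N = \Delta_{M\times N}$ is immediate; and on cohomology $(\pi^i_M\otimes\pi^j_N)_\ast$ is the K\"unneth projector onto $H^i(M)\otimes H^j(N)$, so that $(\pi^k_{M\times N})_\ast$ projects onto $H^k(M\times N)$.

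The heart of the argument is the compatibility of the small diagonals. Let $\tau$ denote the reordering isomorphism $M^3\times N^3 \xrightarrow{\cong} (M\times N)^3$ that interleaves the two triples of factors. I would establish the cycle-level identity
\[ \Delta^{sm}_{M\times N} = \tau_\ast\bigl( \Delta^{sm}_M \times \Delta^{sm}_N\bigr)\ \ \ \hbox{in}\ A^{2(\dim M+\dim N)}\bigl((M\times N)^3\bigr)\ , \]
which simply records that a point of $(M\times N)^3$ lies on the small diagonal precisely when its $M$-coordinates and its $N$-coordinates separately lie on the respective small diagonals. Feeding this identity into the multiplicativity expression, and using once more that composition of correspondences is compatible with external products, I would expand
\[ \pi^k_{M\times N}\circ \Delta^{sm}_{M\times N}\circ(\pi^i_{M\times N}\otimes \pi^j_{M\times N}) \]
into a sum, over all splittings $k=k_1+k_2$, $i=i_1+i_2$, $j=j_1+j_2$, of external products
\[ \Bigl(\pi^{k_1}_M\circ\Delta^{sm}_M\circ(\pi^{i_1}_M\otimes\pi^{j_1}_M)\Bigr)\ \otimes\ \Bigl(\pi^{k_2}_N\circ\Delta^{sm}_N\circ(\pi^{i_2}_N\otimes\pi^{j_2}_N)\Bigr)\ . \]

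To finish, suppose $i+j\neq k$. For each splitting occurring in the sum one cannot simultaneously have $i_1+j_1=k_1$ and $i_2+j_2=k_2$, since adding these would force $i+j=k$; hence at least one of the two tensor factors vanishes by the MCK property of $M$, respectively of $N$. Therefore every summand vanishes, and so does the whole expression, proving that $\{\pi^k_{M\times N}\}$ is multiplicative. The induced bigrading then satisfies $A^\ast(M\times N)_{(\ell)} = \bigoplus_{\ell_1+\ell_2=\ell} A^\ast(M)_{(\ell_1)}\otimes A^\ast(N)_{(\ell_2)}$, as expected. I anticipate that the only real obstacle is bookkeeping: keeping precise track of the reordering isomorphism $\tau$ and of the ordering of the six factors of $(M\times N)^3$, and verifying carefully the two formal compatibility identities, namely that $\circ$ distributes over $\otimes$ and that the small diagonal splits as above. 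Neither is deep, but both demand care with the permutation of factors.
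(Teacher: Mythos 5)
Your proposal is correct. The paper itself does not reprove this statement --- it simply cites \cite[Theorem 8.6]{SV}, noting that the product Chow--K\"unneth decomposition $\pi^k_{M\times N}=\sum_{i+j=k}\pi^i_M\times\pi^j_N$ is the one that works --- and your argument (factoring the small diagonal as an external product, using that $\circ$ distributes over $\otimes$, and observing that for $i+j\neq k$ at least one tensor factor in each summand dies by the MCK property of $M$ or $N$) is exactly the standard proof of that cited result.
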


\begin{proof} This is \cite[Theorem 8.6]{SV}, which shows more precisely that the {\em product CK decomposition\/}
  \[ \pi^i_{M\times N}:= \sum_{k+\ell=i} \pi^k_M\times \pi^\ell_N\ \ \ \in A^{\dim M+\dim N}\bigl((M\times N)\times (M\times N)\bigr) \]
  is multiplicative.
\end{proof}

\section{Main results} 


\subsection{Generalized Bloch conjecture}
The key result of this note is a ``generalized Bloch conjecture'' type of statement for universally defined correspondences (for the generalized Bloch conjecture, cf. \cite{B} and \cite{Vo}). The results announced in the introduction (Corollaries \ref{conics} and \ref{subring} and Theorem \ref{verramck}) will be deduced from Theorem \ref{main}.

\begin{notation}\label{fam} Let $\bar{C}:=C(\PP^2\times\PP^2)$ be the cone over $\PP^2\times\PP^2$, and let
  \[ B\subset \bar{B}:=\PP H^0(\bar{C},  \OO(2)) \]
 be the open parametrizing smooth dimensionally transverse intersections of $\bar{C}\subset \PP^9$ with a quadric (i.e., $B$ parametrizes all Verra fourfolds).
  We will write
   \[ \XX\ \to\ B \]
  for the universal family of Verra fourfolds, i.e.
   \[ \XX:=\bigl\{ (x,\sigma)\in \operatorname{C}(\PP^2\times \PP^2)\times B\ \big\vert\  \  \sigma\vert_x=0\bigr\}\ .\]
    \end{notation}

\begin{remark} By assumption, the fibres $X_b$ over $b\in B$ are smooth, i.e. they avoid the vertex $\nu$ of the cone $\bar{C}$. As such, the family $\XX$ enters into the set-up of Proposition \ref{spread}.
\end{remark}

\subsection{Generalized Bloch conjecture}

\begin{theorem}\label{main} Let $\XX\to B$ denote the universal family of Verra fourfolds (cf. Notation \ref{fam}). Let
$\Gamma\in A^4(\XX\times_B \XX)$ be a relative correspondence such that
  \[ (\Gamma\vert_{X_b\times X_b})_\ast H^{3,1}(X_b)=0\ \ \ \ \hbox{for\ very\ general\ }b\in B\ .\]
  Then also
  \[  (\Gamma\vert_{X_b\times X_b})_\ast A^\ast_{hom}(X_b)=0\ \ \ \forall b\in B\ .\]
\end{theorem}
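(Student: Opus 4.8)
The plan is to combine the Hodge-theoretic hypothesis with a relative transcendental projector to produce a \emph{fibrewise homologically trivial} relative correspondence, feed this into Voisin's spread (Proposition \ref{spread}), and then check that the resulting ambient correction term acts trivially on homologically trivial $1$-cycles.

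\emph{Step 1 (cohomological upgrade).} First I would promote the vanishing on $H^{3,1}$ to a vanishing on the whole transcendental cohomology. For very general $b$, the map $(\Gamma\vert_{X_b\times X_b})_\ast$ is a morphism of Hodge structures on $H^4(X_b)$, so its kernel on $H^4_{tr}(X_b)$ is a sub-Hodge structure. Since $H^{3,1}(X_b)\subset H^4_{tr}(X_b)$ (the algebraic classes sit in $H^{2,2}$) and is nonzero, this kernel is nonzero; as $H^4_{tr}(X_b)$ is a simple Hodge structure for very general $b$ (established in the proof of Proposition \ref{aj}), the kernel is all of $H^4_{tr}(X_b)$. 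Hence $(\Gamma\vert_{X_b\times X_b})_\ast H^4_{tr}(X_b)=0$ for very general $b$.

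\emph{Step 2 (spread).} Let $\Pi:=\Delta_\XX^-\in A^4(\XX\times_B\XX)$ be the corrected relative diagonal of Proposition \ref{univ}, whose fibrewise restriction is the projector onto $H^4_{prim}(X_b)$, equal to $H^4_{tr}(X_b)$ for very general $b$. Set $R:=\Gamma\circ\Pi$. For very general $b$ one has $(R\vert_b)_\ast=(\Gamma\vert_b)_\ast\circ(\Pi\vert_b)_\ast$, whose image lies in $(\Gamma\vert_b)_\ast H^4_{tr}(X_b)=0$; thus $R\vert_b$ is homologically trivial. Proposition \ref{spread}, applied with $\bar M=\bar C$ the cone over $\PP^2\times\PP^2$, $T=\{\nu\}$, $M=C^\circ$, $n=4$, $r=1$, $L_1=\OO(2)$, then yields $\gamma\in A^4(M\times M)$ with
\[ (R+\gamma)\vert_{X_b\times X_b}=0 \quad\text{in } A^4(X_b\times X_b)\ \ \forall b\in B\ .\]

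\emph{Step 3 (the correction is harmless).} The open cone $M=C^\circ$ is an affine bundle over $\PP^2\times\PP^2$, hence has the Chow--K\"unneth property by Proposition \ref{propck}; so $\gamma=\sum_i a_i\times b_i$ with $a_i,b_i$ restrictions of ambient classes. Acting on $z\in A^3_{hom}(X_b)$, only the terms with $a_i\vert_{X_b}\in A^1(X_b)$ contribute, giving $\deg(a_i\vert_{X_b}\cdot z)\,b_i\vert_{X_b}$; since $z$ is homologically trivial this degree vanishes, so $(\gamma\vert_b)_\ast$ kills $A^3_{hom}(X_b)$ for every $b$. By the same degree and dimension bookkeeping, every correction term in $\Delta_\XX^-$ (the classes $\pi^i_\XX$ pulled back from the cone and the surface projectors $\Gamma_{S_j}$) acts as zero on $A^3_{hom}(X_b)$, so $(\Pi\vert_b)_\ast$ is the identity on $A^3_{hom}(X_b)$.

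\emph{Step 4 (conclusion).} Recalling $A^\ast_{hom}(X_b)=A^3_{hom}(X_b)$, for $z\in A^3_{hom}(X_b)$ and any $b$ Steps 2--3 give
\[ (\Gamma\vert_b)_\ast z=(\Gamma\vert_b)_\ast(\Pi\vert_b)_\ast z=(R\vert_b)_\ast z=-(\gamma\vert_b)_\ast z=0\ ,\]
which is the desired vanishing.

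The main obstacle is Step 2: producing a relative correspondence that is genuinely fibrewise homologically trivial, whereas the hypothesis controls only $H^{3,1}$. This is what forces both the Hodge-simplicity input of Step 1 and the use of the universally defined transcendental projector $\Delta_\XX^-$ of Proposition \ref{univ}; without a relative (rather than merely fibrewise) projector one cannot even form $R$ as a relative correspondence and invoke the spread. A secondary point requiring care is Step 3, namely that $\gamma$, a priori an arbitrary cycle on $M\times M$, acts trivially on homologically trivial $1$-cycles; this rests on the triviality of the Chow groups of the open cone.
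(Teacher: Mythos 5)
Your proposal is correct and follows essentially the same route as the paper: upgrade the hypothesis cohomologically, compose with the universally defined corrected diagonal $\Delta^-_\XX$ to get a fibrewise homologically trivial relative correspondence, apply Proposition \ref{spread}, and then use the Chow--K\"unneth property of the open cone to see that the resulting completely decomposed correction acts trivially on $A^3_{hom}(X_b)$. The one (legitimate) streamlining is in Step 1: by invoking simplicity of $H^4_{tr}(X_b)$ at the outset you conclude $(\Gamma\vert_b)_\ast H^4_{tr}(X_b)=0$, so that $\Gamma\circ\Delta^-_\XX$ is already fibrewise homologically trivial, whereas the paper only records the vanishing on $H^{3,1}$ and therefore must first subtract a universally defined algebraic residue $\sum_i\VV^i\times_B\WW^i$ (produced via the Hodge conjecture for $X_b$ and Noether--Lefschetz) before spreading.
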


 \begin{proof} 
 
 The assumption on $\Gamma$ (plus the shape of the Hodge diamond of $X_b$, and the fact that the Hodge conjecture is true for $X_b$ by Lemma \ref{fano}) implies that for the very general $b\in B$ there exist $2$-dimensional subvarieties
 $V_b^i,W_b^i$ such that
   \[ (\Gamma\circ\Delta_\XX^-)\vert_{X_b\times X_b}= \sum_{i=1}^s V_b^i\times W_b^i\ \ \ \hbox{in}\ H^{8}(X_b\times X_b)\ .\]
  (Here $\Delta_\XX^-$ is the ``corrected relative diagonal'' as in (\ref{corrdiag}).)
  By Noether--Lefschetz, the subvarieties  $V_b^i,W_b^i$ are actually obtained by restriction from subvarieties of (the cone over) $\PP^2\times\PP^2$, hence they exist universally. (Instead of evoking Noether--Lefschetz, one could also apply Voisin's Hilbert scheme argument \cite[Proposition 3.7]{V0} to obtain that the $V_b^i,W_b^i$ exist universally). That is, there exist codimension $2$ subvarieties $\VV^i, \WW^i\subset\ \XX$ such that
  \[ \bigl(  \Gamma\circ\Delta_\XX^- -  \sum_{i=1}^s \VV^i\times_B \WW^i\bigr)\vert_{X_b\times X_b}=0 \ \ \ \hbox{in}\ H^{8}(X_b\times X_b)\ ,\ \ \ \hbox{for\ very\ general\ }b\in B\ .\] 
 
 We now wave the magic wand of Proposition \ref{spread} over the relative correspondence 
   \[ R:= \Gamma\circ\Delta_\XX^- - \sum_{i=1}^s \VV^i\times_B \WW^i\ \ \in A^4(\XX\times_B \XX)\ .\] 
   The effect is that there is a cycle
 $\gamma\in A^4(M\times M)$ (where $M$ is the cone over $\PP^2\times\PP^2$ minus its vertex) with the property that
    \begin{equation}\label{prope} \bigl(  \Gamma - \sum_{j\not=4}\Gamma\circ\pi^j_\XX- \sum_{i=1}^s \VV^i\times_B \WW^i+\gamma\bigr)\vert_{X_b\times X_b}=0 \ \ \ \hbox{in}\ A^{4}(X_b\times X_b) \ \ \ \forall b\in B\ .\end{equation} 
   The quasi-projective variety $M$, being an affine bundle over a cellular variety, has the Chow--K\"unneth property (Proposition \ref{propck}). In particular, the cycle $\gamma$ decomposes as
   \[ \gamma=\sum_{j=1}^t \alpha^j\times \beta^j\ \ \ \hbox{in}\ A^4(M\times M)\ \]
   (where $\hbox{codim}(\alpha^j)+\hbox{codim}(\beta^j)=4$ for each $j$). For any given $b\in B$, one can find representatives for $\alpha^j,\beta^j, \VV^i, \WW^i$ that lie in general position with respect to $X_b$. The same applies to $\pi^j_\XX\vert_{X_b}$ with $j\not=4$, which also (by construction) comes from $M\times M$.
    Substituting this in (\ref{prope}), the upshot is that for any $b\in B$, there is a rational equivalence
    \[ \Gamma\vert_{X_b\times X_b}=\sum_{i=1}^{s+t} V^i_b\times W^i_b\ \ \ \hbox{in}\ A^4(X_b\times X_b)\ ,\]
    where $V_b^i,W_b^i\subset X_b$ are subvarieties and $\dim (V_b^i)+\dim (W_b^i)=4$ for all $i$, i.e., $    \Gamma\vert_{X_b\times X_b}$ is a {\em completely decomposed cycle}.
    As is well-known (\cite{BS}, \cite{moi}), completely decomposed cycles do not act on $A^\ast_{hom}(X_b)$, whence the conclusion.
     \end{proof}

 \subsection{Conics and $1$-cycles}
 The results stated in the introduction can be obtained as special cases of Theorem \ref{main}:
 
 \begin{corollary}\label{conics} Let $X$ be a Verra fourfold, let $F:=F_{(1,1)}(X)$ denote the Hilbert scheme of $(1,1)$-conics, and let 
   \[ \begin{array}[c]{ccc}
       C&\xrightarrow{p}& F\\
       \ \ \ \ \downarrow{\scriptstyle q}&&\\
          X&&\\
          \end{array}\]
          be the universal $(1,1)$-conic. Assume that $F$ is non-singular.
          Then there exists a correspondence $\Psi\in A^{5}(X\times F)$ such that
          \[  A^3_{hom}(X)\ \xrightarrow{\Psi_\ast}\ A^5(F)\ \xrightarrow{q_\ast p^\ast}\ A^3(X) \]
          is the identity.
             \end{corollary}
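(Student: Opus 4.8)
The plan is to deduce the statement from the generalized Bloch principle of Theorem \ref{main}, the cohomological input being the Abel--Jacobi isomorphism of Proposition \ref{aj}. Concretely, I would first produce a relative correspondence $\Psi$ for which $q_\ast p^\ast\circ\Psi_\ast$ agrees with the identity on $H^4_{tr}(X_b)$ for very general $b$, and then let Theorem \ref{main} promote this cohomological identity to an identity of \emph{actions on $A^3_{hom}$}, valid for every Verra fourfold with $F$ smooth (not merely the very general one).

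First I would construct $\Psi$. Recall from the proof of Proposition \ref{aj}(\rom1) that $p_\ast q^\ast\colon H^4_{tr}(X)\to H^2(F)$ is injective for very general $X$, the key being the projective-bundle relation (\ref{star}) together with the degree identity
\[ d\,\alpha^2=\bigl((p^\prime)_\ast(q^\prime)^\ast\alpha\bigr)^2\cdot c_1 \quad\hbox{in}\ H^6(F^\prime)\cong\QQ\ , \]
which exhibits the cup product on $H^4_{tr}(X)$ as the pull-back of a non-degenerate pairing on $\ima(p_\ast q^\ast)$. I would take $\Psi$ to be a non-zero scalar multiple of the transpose of the universal-conic correspondence $[C]$, cut down to the correct dimension by a power $\xi^k$ of the relatively ample class $\xi\in A^1(F)$ of Proposition \ref{aj}(\rom3); thus $\Psi_\ast(\alpha)$ is a multiple of $\bigl(p_\ast q^\ast\alpha\bigr)\cdot\xi^k$. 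The composite $q_\ast p^\ast\circ\Psi_\ast$ is then evaluated on $H^4_{tr}(X)$ by exactly the ``going up and coming down'' computation of that same proof, and the displayed degree identity shows it is a non-zero homothety; after rescaling $\Psi$ we obtain
\[ q_\ast p^\ast\circ\Psi_\ast=\ide \quad\hbox{on}\ H^4_{tr}(X)\ . \]
Since $\dim H^{3,1}(X)=1$ and every Hodge endomorphism of the simple Hodge structure $H^4_{tr}(X)$ is a homothety (as in Proposition \ref{aj}(\rom2)), it is in fact enough to verify this identity on the line $H^{3,1}(X)$, where it follows at once from the injectivity of $p_\ast q^\ast$.

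Next I would spread everything out over the base. Over the locus $B$ where $F$ is smooth, the universal conic furnishes $[C]$ as a relative correspondence, and $\Psi$ extends to a relative correspondence by the Hilbert-scheme/Noether--Lefschetz argument already used in the proof of Theorem \ref{main} (alternatively by Proposition \ref{spreadprinc}). Setting
\[ \Gamma:=\Delta_\XX^- - [C]\circ\Psi \quad\in\ A^4(\XX\times_B \XX)\ , \]
with $\Delta_\XX^-$ the corrected relative diagonal (\ref{corrdiag}), the previous step gives $(\Gamma\vert_{X_b\times X_b})_\ast H^{3,1}(X_b)=0$ for very general $b$, and Theorem \ref{main} then yields $(\Gamma\vert_{X_b\times X_b})_\ast A^\ast_{hom}(X_b)=0$ for \emph{all} $b\in B$. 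Because the pieces subtracted in $\Delta_\XX^-$ factor through algebraic cohomology and hence act trivially on $A^3_{hom}$, the relative diagonal $\Delta_\XX^-$ acts as the identity on $A^3_{hom}(X)=A^\ast_{hom}(X)$; so for the given $X$ (a fibre $X_{b_0}$, $b_0\in B$) we obtain $q_\ast p^\ast\circ\Psi_\ast=\ide$ on $A^3_{hom}(X)$, which is the assertion. The main obstacle is the first step: pinning down the exact correction by $\xi$ and carrying out the intersection computation on the universal conic that establishes $q_\ast p^\ast\circ\Psi_\ast=\ide$ on $H^4_{tr}(X)$, and — equally important — checking that $\Psi$ genuinely spreads to a relative correspondence over all of $B$, since it is precisely this that lets Theorem \ref{main} reach every Verra fourfold rather than only the very general one.
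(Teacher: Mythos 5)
Your overall architecture is the paper's: produce a correspondence $\Psi$ with $q_\ast p^\ast\circ\Psi_\ast=\ide$ on $H^4_{tr}(X_b)$ for very general $b$, spread it out over the base, and let Theorem \ref{main} upgrade this cohomological identity to the identity on $A^3_{hom}$ for every fibre. The final step (applying Theorem \ref{main} to $\Delta^-_\XX-[C]\circ\Psi$ and noting that the pieces subtracted in $\Delta^-_\XX$ are completely decomposed, hence act trivially on $A^3_{hom}$) is fine. Where you diverge from the paper is in the construction of $\Psi$, and that is where there is a genuine gap.

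You take $\Psi$ to be a multiple of ${}^t[C]$ corrected by $\xi^k$ (necessarily $k=3$ for the degrees to match), so that $T(\alpha):=q_\ast p^\ast\Psi_\ast(\alpha)$ is a multiple of $q_\ast p^\ast\bigl((p_\ast q^\ast\alpha)\cdot\xi^3\bigr)$, and you assert that the non-vanishing of $T$ on $H^{3,1}$ ``follows at once from the injectivity of $p_\ast q^\ast$''. It does not: injectivity only gives $p_\ast q^\ast\alpha\neq 0$ in $H^{2,0}(F)$; you must still show this class survives multiplication by $\xi^3$ and the subsequent $q_\ast p^\ast$. Neither is automatic: $\xi$ is only \emph{relatively} ample for $\sigma\colon F\to Z$, so hard Lefschetz does not apply to $\cdot\,\xi^3$ on $H^2(F)$, and the identity you invoke from the proof of Proposition \ref{aj}(\rom1), namely $d\,\alpha^2=\bigl((p^\prime)_\ast(q^\prime)^\ast\alpha\bigr)^2\cdot c_1$, lives on the threefold section $F^\prime=F\cap h^2$ and involves $c_1(\EE)$, not $\xi$. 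The one intersection number the paper actually computes is $\int_{F}(p_\ast q^\ast\alpha)\cdot(p_\ast q^\ast\beta)\cdot i_\ast c_1(\EE)=d\,\langle\alpha,\beta\rangle_X$ (with $i\colon F^\prime\hookrightarrow F$), which controls the correction class $i_\ast c_1(\EE)\in H^6(F)$ and says nothing about $\xi^3$. If you replace $\xi^3$ by $i_\ast c_1(\EE)$, your explicit construction works, and in fact more cleanly than you propose: the displayed pairing identity plus non-degeneracy of cup product on $H^4_{tr}(X)$ gives $T=d\,\ide$ outright, with no appeal to rigidity of Hodge endomorphisms. As written, with $\xi^3$, the key non-vanishing is unproved. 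For comparison, the paper avoids any explicit computation at this point: it uses Proposition \ref{aj} to get the isomorphism $\Omega_\ast p_\ast q^\ast\colon H^4(X)_0\to H^2(Z)_0(1)$, invokes algebraicity of the K\"unneth components of $X$ and $Z$ together with Manin's identity principle to produce an abstract inverse correspondence $\Phi$, and sets $\Psi={}^t\Omega\circ{}^t\Phi$; the price of that route is a non-explicit $\Psi$, which must then be spread out by the Hilbert-scheme argument, whereas a corrected version of your $\Psi$ would be universally defined from the start.
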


 \begin{proof} Let $Z$ denote the double EPW quartic associated to $X$. Taking $X$ sufficiently general, we may assume $Z$ is smooth.   
  Proposition \ref{aj} states that there is a correspondence $\Omega$ such that
    \begin{equation}\label{ome}  \Omega_\ast p_\ast q^\ast\colon\ \    H^4(X)_{0}\ \to\   H^2(Z)_0(1) \end{equation}
   is an isomorphism (precisely, $\Omega$ consists of intersecting with a relatively ample class for the fibration $\sigma\colon F\to Z$, followed by projection to $Z$). 
   As noted before, both $X$ and $Z$ verify the standard conjectures, and so their K\"unneth components are algebraic. The isomorphism (\ref{ome}), plus Manin's identity principle, implies that there is an isomorphism of homological motives
   \[ \Omega\circ \Gamma_p\circ {}^t \Gamma_q\colon\ \ h^4(X)\oplus {\mathds{1}}(2)^{\oplus 2}\ \xrightarrow{\cong}\ h^2(Z)(1)\oplus {\mathds{1}}(1)\ \ \ \hbox{in}\ 
       \MM_{\rm hom}\ .\]
  This means there exists a correspondence $\Phi$ which is inverse to $\Omega\circ \Gamma_p\circ {}^t \Gamma_q$, and so in particular
  there is equality
  \[ (\Phi\circ  \Omega\circ \Gamma_p\circ {}^t \Gamma_q)_\ast=\ide\colon\ \  H^4_{tr}(X,\QQ)\ \to\ H^4_{tr}(X,\QQ)\ \]
  (here $H^4_{tr}(X,\QQ)$ denotes the orthogonal complement of the algebraic part with respect to the polarization).
  Taking transpose on both sides, we obtain that also
  \[ (\Gamma_q\circ {}^t \Gamma_p\circ {}^t \Omega\circ {}^t \Phi)_\ast=\ide\colon\ \ H^4_{tr}(X,\QQ)\ \to\ H^4_{tr}(X,\QQ)\ ,\]
  (and so in particular
  $ q_\ast p^\ast\colon H^2_{tr}(F,\QQ)\to H^4_{tr}(X,\QQ)$
  is surjective).
   For brevity, let us write
   \[ \Psi:=  {}^t \Omega\circ {}^t \Phi\ \ \in A^{6}(X\times F)\ ,\]
   so that according to the above we have that
    \[  H^4_{tr}(X)\ \xrightarrow{\Psi_\ast}\ H^{8}_{tr}(F)\ \xrightarrow{q_\ast p^\ast}\ H^4_{tr}(X) \]
    is the identity.
    
    Now, we can also consider the above construction family-wise. As before, let $\XX\to B$ denote the universal family of GM fourfolds, and let $\FF\to B$ denote the universal family of Hilbert schemes of $(1,1)$-conics. There is a diagram of $B$-schemes
    \[      \begin{array}[c]{ccc}
       \CC&\xrightarrow{p}& \FF\\
       \ \ \ \ \downarrow{\scriptstyle q}&&\\
          \XX&&\\
          \end{array}\]
  (where $\CC$ is the ``relative universal $(1,1)$-conic''). 
  Let $B^\prime\subset B$ denote the Zariski open where the Hilbert schemes $F_b$ are non-singular,
  and let $B^{\prime\prime}\subset B^\prime$ denote the Zariski open where both $F_b$ and the double EPW quartic $Z_b$ are non-singular. By the above construction, for each $b\in B^{\prime\prime}$ there exists a correspondence $\Psi_b\in A^{6}(X_b\times F_b)$ such that
  \[ \Bigl( \bigl( \Gamma_q\circ {}^t \Gamma_p\bigr)\vert_{F_b\times X_b}\circ \Psi_b\Bigr){}_\ast=\ide\colon\ \ H^4_{tr}(X_b)\ \to\ H^4_{tr}(X_b)\ .\]
  Using Voisin's Hilbert schemes argument \cite[Proposition 3.7]{V0} (cf. also \cite[Proposition 2.11]{moitod} for a very similar statement), these fibrewise cycles $\Psi_b$ can be spread out. More precisely, there exists a relative correspondence
  $\Psi\in A^6(\XX\times_B \FF)$ doing the job of the various $\Psi_b$, i.e. 
  \[  \Bigl(\bigl( \Gamma_q\circ {}^t \Gamma_p\circ \Psi\bigr)\vert_{F_b\times X_b}\Bigr){}_\ast=\ide\colon\ \ H^4_{tr}(X_b)\ \to\ H^4_{tr}(X_b)\ \ \ \forall b\in B^{\prime\prime}\ .\]     

Theorem \ref{main}, applied to the relative correspondence
  \[ \Gamma:=   \Gamma_q\circ {}^t \Gamma_p\circ \Psi-\Delta_\XX\ \ \in A^4(\XX\times_{B^\prime} \XX) \]
  gives us that 
    \[ \Bigl(\bigl(  \Gamma_q\circ {}^t \Gamma_p\circ \Psi\bigr)\vert_{X_b\times X_b}\Bigr){}_\ast=\ide\colon\ \ A^3_{hom}(X_b)\ \to\ A^3_{hom}(X_b)\ \ \ \forall b\in B^\prime\ .\]
In particular, this means that there is a surjection
  \[  (q_b)_\ast (p_b)^\ast\colon\ \   A^5_{hom}(F_b)\ \twoheadrightarrow\ A^3_{hom}(X_b)  \ \ \ \forall b\in B^\prime\ .\]
Corollary \ref{conics} is now proven.
   \end{proof}

 \begin{remark} Regarding Corollary \ref{conics}, it is worth pointing out that for any rationally connected variety $X$ (in particular, for a Verra fourfold), one knows that the group of $1$-cycles $A_1(X)_{\Z}$ is generated by rational curves \cite[Theorem 1.3]{TZ}.
  \end{remark}

  \begin{remark} It would be interesting to prove that Corollary \ref{conics} is true for Chow groups with {\em integer coefficients\/}. Unfortunately, the method employed here seems ill-adapted for proving statements with integer coefficients.
  
   Also, it would be more satisfactory to obtain a statement for {\em all\/} smooth Verra fourfolds. Unfortunately, when the Hilbert scheme of $(1,1)$-conics is singular, the correspondence-type argument employed here becomes problematic.
       \end{remark}

  \subsection{A motivic relation}

 Before establishing the existence of an MCK decomposition, we first prove a statement that may be of independent interest: a motivic relation between a Verra fourfold and its associated K3 surface. Let us make precise what we mean by ``associated K3 surface'':
 
 \begin{definition}\label{assk3} Let $X$ be a Verra fourfold, and let $\bar{S}_1, \bar{S}_2$ be the double cover of $\PP^2$ branched along the discriminant loci of the two quadric fibrations $X\to\PP^2$. An {\em associated K3 surface\/} is a minimal resolution
  \[ S_j\ \to\ \bar{S}_j\ ,\]
  where $j=1,2$.
  \end{definition}
  
  With this definition, every Verra fourfold has $2$ associated K3 surfaces.
      
 \begin{theorem}\label{verrak3} Let $X$ be a Verra fourfold, and let $S$ be an associated K3 surface (cf. Definition \ref{assk3}). There is
an isomorphism of Chow motives
  \[ \Phi\colon\ \ \ttt^4(X)\ \xrightarrow{\cong}\ \ttt^2(S)(1)\ \ \ \hbox{in}\ \MM_{\rm rat}\ .\]
   \end{theorem}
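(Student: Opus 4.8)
The plan is to upgrade the cohomological isomorphism $H^4_{tr}(X) \cong H^2_{tr}(S)(1)$ from Proposition \ref{k3} to an isomorphism of Chow motives, using the generalized Bloch conjecture machinery of Theorem \ref{main}. First I would take the correspondence $\Psi \in A^3(X \times S)$ furnished by Proposition \ref{k3}(\rom1), which induces the isomorphism $\Psi_\ast\colon H^4_{tr}(X) \xrightarrow{\cong} H^2_{tr}(S)$. Since both $\ttt^4(X)$ and $\ttt^2(S)(1)$ are defined by projectors (cf. the decompositions stated just after Theorems \ref{t2} and \ref{aj0}), I would define a motive map $\Phi$ by composing $\Psi$ with these projectors, so that $\Phi\colon \ttt^4(X) \to \ttt^2(S)(1)$ induces $\Psi_\ast$ on cohomology, which is an isomorphism. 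It therefore suffices to produce a two-sided inverse in $\MM_{\rm rat}$.

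The key step is to show that $\Phi$ is an isomorphism of \emph{Chow} motives, not merely of homological motives. Here I would exploit the fact that the Hodge structure $H^2_{tr}(S)$ is simple (for generic $S$) so that, using the compatibility with bilinear forms in Proposition \ref{k3}(\rom3), the transpose correspondence ${}^t\Psi$ composed with $\Psi$ acts as a nonzero multiple of the identity on $H^4_{tr}(X)$, and symmetrically on $H^2_{tr}(S)$. Rescaling, I obtain candidate correspondences $\Phi$ and $\Psi^{-1}$ such that $\Psi^{-1}\circ\Phi - \ide_{\ttt^4(X)}$ and $\Phi\circ\Psi^{-1} - \ide_{\ttt^2(S)(1)}$ act as zero on cohomology; in particular each restricts to a correspondence annihilating $H^{3,1}(X)$ (respectively $H^{2,0}(S)$). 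This is precisely the situation Theorem \ref{main} is designed to handle.

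To invoke Theorem \ref{main}, I would spread the fibrewise correspondences into relative correspondences over the base $B'$ (or $B''$) of Verra fourfolds with smooth associated K3 surface, exactly as done in the proof of Corollary \ref{conics}: Proposition \ref{univ} already provides the universal version of $\Psi$, and Voisin's Hilbert scheme argument (cf. \cite[Proposition 3.7]{V0}, \cite[Proposition 2.11]{moitod}) upgrades the fibrewise inverse relation to a relative one. Applying Theorem \ref{main} to the relative correspondence $\Gamma := \Psi^{-1}\circ\Phi - \Delta_\XX^{tr}$ (the part of the corrected diagonal cutting out $\ttt^4$), whose fibrewise action kills $H^{3,1}(X_b)$ for very general $b$, forces its action on $A^\ast_{hom}(X_b)$ to vanish for \emph{all} $b \in B'$. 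Combined with the symmetric statement for $S$ (obtained by running the same argument for the universal family of genus $2$ K3 surfaces, using Proposition \ref{k3}(\rom2) that a very general such K3 surface does arise from a Verra fourfold), this shows $\Psi^{-1}\circ\Phi = \ide$ and $\Phi\circ\Psi^{-1} = \ide$ on the level of Chow motives, i.e. $\Phi$ is an isomorphism in $\MM_{\rm rat}$.

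The main obstacle I anticipate is the case where the associated K3 surface $S$ is \emph{singular} (i.e. $b \notin B'$): there Theorem \ref{main} and the spread argument apply to $X$ directly, but $S$ carries only rational double points, so one must work in the extended category of Chow motives for Alexander schemes (Proposition \ref{alex} and Remark \ref{alexm}), verifying that $\ttt^2(S)$ and the relevant projectors still make sense and that the inverse relation persists under specialization. The other delicate point is ensuring the rescaling constants are nonzero and uniform enough to globalize — but this is controlled by the fact that the bilinear-form constant $\nu$ in Proposition \ref{k3}(\rom3) is the same for all Verra fourfolds.
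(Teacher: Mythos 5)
Your overall strategy -- spread the fibrewise correspondence and its homological inverse over the base, apply Theorem \ref{main} to the difference with the (corrected) diagonal, and treat the singular associated K3 surfaces separately via Alexander schemes and specialization -- is the same as the paper's. But there is a genuine gap at the decisive step. Theorem \ref{main} concludes only that $(\Gamma\vert_{X_b\times X_b})_\ast$ annihilates $A^\ast_{hom}(X_b)$, i.e.\ it controls the \emph{action} of the correspondence on Chow groups. To conclude that $\Phi\vert_b$ is an isomorphism of Chow motives you need the stronger statement that $\pi^{4,tr}_{X_b}\circ\bigl(\Phi^{-1}\vert_b\circ\Phi\vert_b-\Delta^-_{X_b}\bigr)\circ\pi^{4,tr}_{X_b}=0$ as a \emph{cycle} in $A^4(X_b\times X_b)$; a correspondence can act as zero on all of $A^\ast_{hom}$ without being killed by the transcendental projectors. (For surfaces this implication is a theorem of Kahn--Murre--Pedrini, but you cannot invoke it for the fourfold $\ttt^4(X)$ without circularity, since $\ttt^4(X)\cong\ttt^2(S)(1)$ is exactly what is being proved.) The paper bridges this by inspecting the \emph{proof} of Theorem \ref{main}: it actually produces a rational equivalence $\Gamma\vert_b=\delta\vert_b$ with $\delta\vert_b$ completely decomposed, and a completely decomposed correspondence satisfies $\pi^{4,tr}_{X_b}\circ(\delta\vert_b)\circ\pi^{4,tr}_{X_b}=0$ by the argument of \cite[Theorem 7.4.3]{KMP}. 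Your write-up jumps directly from ``acts as the identity on $A^3_{hom}$'' to ``equals the identity in $\MM_{\rm rat}$'' without this ingredient.

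Two smaller points. For the right-inverse you propose to ``run the same argument for the universal family of genus $2$ K3 surfaces,'' but no analogue of Theorem \ref{main} for that family is available in the paper; what is used instead is the generalized Franchetta conjecture (Theorem \ref{gfc}) applied to $\Phi\circ\Delta^-_\XX\circ\Phi^{-1}-\Delta^-_\Ss$ in $A^2(\Ss\times_{B^\prime}\Ss)$, which directly yields a fibrewise \emph{rational equivalence} (again the stronger, cycle-level statement one needs). You should also track how the Lefschetz summands ${\mathds 1}(\ast)$ of the splittings of $(X_b,\Delta^-_{X_b},0)$ and $(S_b,\Delta^-_{S_b},1)$ match up for non--very-general $b$ (where $H^4_{tr}\neq H^4_{prim}$), which the paper checks explicitly before restricting to the transcendental summands. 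Your construction of the homological inverse via ${}^t\Psi$ and the bilinear-form compatibility is fine and is a legitimate variant of the paper's use of Proposition \ref{spreadprinc}.
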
  
   
  \begin{proof} As before, let $\XX\to B$ denote the universal family of Verra fourfolds, and $\Ss\to B$ the universal family of degree $2$ K3 surfaces.
  As we have seen (Propositions \ref{k3} and \ref{univ}), there is a smaller non-empty set $ B^\prime\subset B$, and a correspondence
    \[ \Phi\ \ \in\ A^3(\XX\times_{B^\prime} \Ss)\ ,\]
    with the property that
    \[  (\Phi_b)_\ast\colon\ \ H^4_{tr}(X_b)\ \xrightarrow{\cong}\ H^2_{tr}(S_b)\ \]
                   is an isomorphism, for all $b\in B^{\prime}$. 
  
  We state a lemma:
                   
    \begin{lemma} Set-up as above. There exists a relative correspondence 
      \[ \Phi^{-1}\ \ \in\ A^3(\Ss\times_{B^\prime} \XX)    \ ,\]
    which fibrewise induces the inverse isomorphism: 
      \[  (\Phi^{-1}\vert_b)_\ast (\Phi\vert_b)_\ast =\ide\colon\ \ H^4_{tr}(X_b)\ \to\ H^4_{tr}(X_b)\ \ \ \forall b\in B^\prime\ .\]    
      \end{lemma}               
         
       \begin{proof} This is a standard argument. The isomorphism of cohomology groups translates into an isomorphism of cohomological motives
        \[ \Phi_b\colon\ \ \ttt^4(X_b)\ \xrightarrow{\cong}\ \ttt^2(S_b)(1)\ \ \ \hbox{in}\ \MM_{\rm hom}\ ,\]       
        for any $b\in B^\prime$. Thus, for any $b\in B^\prime$ there exists an inverse correspondence $\Phi_b^{-1}\in A^3(S_b\times X_b)$. An application of proposition \ref{spreadprinc}
        then gives a relative correspondence $\Phi^{-1}$ doing the job.
        \end{proof}

    Let us now consider the relative correspondence
    \begin{equation}\label{Gamma} \Gamma:= \Phi^{-1}\circ \Delta^-_\Ss\circ \Phi-\Delta^-_\XX\ \ \in\ A^4(\XX\times_{B^\prime}\XX)\ ,\end{equation}
    where $\Delta_\XX^-$ is the corrected relative diagonal as in (\ref{corrdiag}), and $\Delta^-_\Ss$ is the corrected relative diagonal for the universal family of degree $2$ K3 surfaces defined similarly (i.e., by subtracting $\pi^0, \pi^4$ and the relative cycle restricting to $h^2\times h^2\subset S_b\times S_b$, where $h^2$ is a general complete intersection).  
    This has the property that
    \[ (\Gamma_b)_\ast =0\colon\ \ H^4_{tr}(X_b)\ \to\ H^4_{tr}(X_b)\ \ \ \forall b\in B^{\prime}\ .\]
    Applying Theorem \ref{main} to $\Gamma$, it follows that
    \[ (\Phi^{-1}\vert_b)_\ast (\Phi\vert_b)_\ast=\ide\colon\ \ A^\ast_{hom}(X_b)\ \to\ A^\ast_{hom}(X_b)\ \ \ \forall b\in B^\prime\ ,\]
%
   and so we get injections
   \[  (\Phi_b)_\ast\colon\ \ A^3_{hom}(X_b)\ \hookrightarrow\ A^2_{hom}(S_b) \]
   for all $b\in B^\prime$. 
   
   We now proceed to upgrade from Chow groups to Chow motives. Inspecting the proof of Theorem \ref{main}, we observe that Theorem \ref{main} actually gives a bit more: given a relative correspondence $\Gamma\in A^4(\XX\times_{B^\prime}\XX)$ which is fibrewise homologically trivial, there exists a relative correspondence $\delta$ such that on the one hand
   \[ (\Gamma-\delta)\vert_b=0\ \ \ \hbox{in}\ A^4(X_b\times X_b)  \]
   for all $b\in B^\prime$, and on the other hand the restriction $\delta\vert_b$ may be assumed to be completely decomposed, for any $b\in B^\prime$. That is, in the above argument we actually have an equality of correspondences
   \begin{equation}\label{eq} (\Phi^{-1}\vert_b)\circ (\Delta_{S_b}^-)\circ (\Phi\vert_b) = \Delta_{X_b}^- +\delta\vert_b\ \ \ \hbox{in}\ A^4(X_b\times X_b)  \ \ \ \forall b\in B\ ,\end{equation}
   with $\delta\vert_b$ completely decomposed.
  The correspondence $\delta\vert_b$, being completely decomposed, has the property that
   \begin{equation}\label{theprop} \pi^{4,tr}_{X_b}\circ (\delta\vert_b)\circ   \pi^{4,tr}_{X_b}=0\ \ \ \hbox{in}\ A^4(X_b\times X_b)\ ,\end{equation} 
   where $\pi^{4,tr}_{X_b}$ is the projector defining the transcendental motive $\ttt^4(X_b)$, which is a submotive of the motive $(X_b,\Delta^-_{X_b},0)$. (To prove property (\ref{theprop}), 
 one reasons just as in \cite[Theorem 7.4.3]{KMP}, where this is proven for surfaces.)
  
  Hence, taking equality (\ref{eq}) and composing on both sides with $\pi^{4,tr}_{X_b}$, we obtain
    \[\pi^{4,tr}_{X_b}\circ (\Phi^{-1}\vert_b)\circ (\Delta_{S_b}^-)\circ (\Phi\vert_b)\circ \pi^{4,tr}_{X_b} = \pi_{X_b}^{4,tr} \ \ \ \hbox{in}\ A^4(X_b\times X_b)  \ \ \ \forall b\in B\ .\]
      In other words, the map of motives
   \[ \Phi\vert_b\colon\ \ \ttt^4(X_b)\ \to\ (S_b,\Delta^-_{S_b},1)\ \ \ \hbox{in}\ \MM_{\rm rat} \]
   has a left-inverse (given by $\Phi^{-1}\vert_b$). For very general $b\in B^\prime$ (i.e. for those $b$ such that $S_b$ has Picard number $1$), the right--hand side is just 
   the transcendental motive 
   $\ttt^2(S_b)$, and so we get a map
   \begin{equation}\label{thisone} \Phi\vert_b\colon\ \ \ttt^4(X_b)\ \to\ \ttt^2(S_b)(1)\ \ \ \hbox{in}\ \MM_{\rm rat} \end{equation}
   admitting a left-inverse (given by $\Phi^{-1}\vert_b$). For arbitrary $b\in B^\prime$, the motive $\ttt^2(S_b)(1)$ is a submotive of $(S_b,\Delta_{S_b}^-,1)$, and there is a splitting
   \begin{equation}\label{splitS} (S_b,\Delta_{S_b}^-,1)\ \cong\  \ttt^2(S_b)(1)\oplus {\mathds{1}}(1)^{\rho-1}\ \ \ \hbox{in}\ \MM_{\rm rat}\ \end{equation}
   (where $\rho$ is the Picard number of $S_b$).
   On the Verra fourfold side, there is a similar splitting
    \begin{equation}\label{splitX}   (X_b,\Delta_{X_b}^-,0)\ \cong\  \ttt^4(X_b)\oplus {\mathds{1}}(2)^{\rho-1}\ \ \ \hbox{in}\ \MM_{\rm rat}\ .\end{equation}
    The map 
     \[ \Phi^{-1}\vert_b\colon\ \  (S_b,\Delta_{S_b}^-,1)\ \to\     (X_b,\Delta_{X_b}^-,0)\ \ \ \hbox{in}\ \MM_{\rm rat}\ \]
     sends the Lefschetz part ${\mathds{1}}(1)^{\rho-1}$ of the splitting (\ref{splitS}) to the Lefschetz part ${\mathds{1}}(2)^{\rho-1}$ of the splitting (\ref{splitX})
     (if not, there is non-zero cohomology $H^2({\mathds{1}}(1)^{\rho-1})=H^2(S_b)_0\cap H^{1,1}(S_b)$ which is sent to zero under $\Phi^{-1}\vert_b$, contradicting
     the isomorphism of Proposition \ref{univ}). It follows that the map 
   (\ref{thisone}) admits a left-inverse for {\em all\/} $b\in B^\prime$.  
   
  Starting from the K3 side, things are slightly easier. That is, we now consider the relative correspondence  
    \[ \Gamma:= \Phi\circ \Delta^-_{\XX}\circ \Phi^{-1}-\Delta^-_\Ss\ \ \in\ A^2(\Ss\times_{B^\prime}\Ss)\ .\]
   We are going to use a particular instance of the ``generalized Franchetta conjecture'':
  
  \begin{theorem}[\cite{FLV}]\label{gfc} Let $\Ss\to B$ denote the universal family of smooth degree $2$ K3 surfaces. Let $\Gamma\in A^i(\Ss^{m/B})$ where $m\le 3$. The
  restriction
    \[ \Gamma\vert_{(S_b)^m}\ \ \in\ A^i\bigl((S_b)^m) \]
    is homologically trivial if and only if it is rationally trivial.
  \end{theorem}
  
  \begin{proof} This is \cite[Theorem 1.5(\rom1)]{FLV}.
  \end{proof}

Applying Theorem \ref{gfc} (with $m=2$) to $\Gamma$, we conclude that    
  \[  (\Phi\vert_b)\circ (\Phi^{-1}\vert_b)-\Delta^-_{S_b}=0 \ \ \in\ A^2(S_b\times S_b)\ ,\]  
  for all $b\in B^\prime$. Reasoning as above, this implies that $\Phi^{-1}\vert_b$ is a right-inverse to
   \begin{equation}\label{thismap}  \Phi\vert_b\colon\ \ \ (X_b,\Delta^-_{X_b},0)\ \to\  (S_b, \Delta^-_{S_b},1)\ \ \ \hbox{in}\ \MM_{\rm rat}\ ,\end{equation}
   for all $b\in B^\prime$, and so this map is an isomorphism for all $b\in B^\prime$. This already gives the required isomorphism of transcendental motives for very general $b\in B^\prime$.
   To extend this to all of $B^\prime$, we note that for any $b\in B^\prime$, we again consider the splittings (\ref{splitX}) and (\ref{splitS}).
    Under the isomorphism (\ref{thismap}) the Lefschetz motives of the splitting (\ref{splitX}) are sent to the Lefschetz motives on the splitting (\ref{splitS}) (this follows because the isomorphism of proposition \ref{univ} sends algebraic classes to algebraic classes). It follows that the isomorphism (\ref{thismap}) induces an isomorphism
            \[    \Phi\vert_b\colon\ \  \ttt^4(X_b)\ \xrightarrow{\cong}\  \ttt^2(S_b)(1)\ \ \ \hbox{in}\ \MM_{\rm rat}\ , \ \ \ \forall b\in B^\prime\ .\]
                                  
   To wrap up the proof, it only remains to extend from $B^\prime$ to $B$. Given a point on the boundary $b\in B\setminus B^\prime$, there exists a pencil $\XX\to B_1$ (i.e. $\dim B_1=1$) such that the central fibre $X_0$ is $X_b$, and a general fibre is a Verra fourfold in the restricted family $\XX\to B^\prime$. On the $K3$ surface side, we get a pencil $\Ss\to B_1$ where the central fibre $S_0$ is a K3 surface with rational double points, and the general fibre is smooth. From the Brieskorn--Tyurina theory of simultaneous resolution, we know that (up to shrinking
   $B_1$ around $0$) there exists a finite base-change $\pi\colon B_1^\heartsuit\to B_1$ such that
   the base-changed family $\Ss^\heartsuit\to B_1^\heartsuit$ has a simultaneous resolution $f\colon \wt{\Ss}^\heartsuit\to\Ss^\heartsuit$. The formalism of relative correspondences applies to the families $\wt{\Ss}^\heartsuit\to B_1^\heartsuit$ and
   $\XX^\heartsuit\to B_1^\heartsuit$.
   
   Let us define open subsets $B_1^\prime:= B_1\cap B^\prime$, and $ B_1^{\prime\,\heartsuit}:=\pi^{-1}(B_1^\prime)$.    
    The relative cycles $\Phi\in A^3(\XX\times_{B^\prime}\Ss)$ and $\Phi^{-1}\in A^3(\Ss\times_{B^\prime}\XX)$ restrict to cycles
    \[ \begin{split} &\Phi_1:= \Phi\vert_{\XX\times_{B_1^\prime}\Ss}\ \ \in\ \ A^3(\XX\times_{B_1^\prime}\Ss),\ \\
            &\Phi_1^{-1}:=\Phi^{-1}\vert_{\Ss\times_{B_1^\prime}\XX}\ \ \in\ A^3(\Ss\times_{B_1^\prime}\XX) \ .\\
            \end{split}\]
   We can pull-back these cycles along $\pi\circ f$, and then extend over all of $B_1^\heartsuit$, i.e. there exist (non-canonical) cycles
          \[ \begin{split} &{}^\heartsuit\bar{\Phi}_1\ \ \in\ \ A^{3}(\XX^\heartsuit\times_{B_1^\heartsuit}\wt{\Ss}^\heartsuit),\ \\
            &{}^\heartsuit \bar{\Phi}_1^{-1}\ \ \in\ A^{3}(\wt{\Ss}^\heartsuit\times_{B_1^\heartsuit}\XX^\heartsuit) \ \\
            \end{split}\]
     restricting to $f^\ast\pi^\ast(\Phi_1)$ resp. to $f^\ast\pi^\ast(\Phi_1^{-1})$ (NB: by abuse of language, we will simply write $\pi$ for all morphisms induced by $\pi$).
     
     By construction, for general $b\in B_1^\heartsuit$ the fibrewise restrictions of these cycles induce mutually inverse isomorphisms of motives
     \[  \begin{split} &{}^\heartsuit\bar{\Phi}_1\vert_b\colon\ \   \ttt^4(X_b)\ \xrightarrow{\cong}\ \ttt^2(\wt{S}_b)(1)    ,\ \\
            &{}^\heartsuit\bar{\Phi}_1^{-1}\vert_b\colon\ \   \ttt^2(\wt{S}_b)(1)\ \xrightarrow{\cong}\ \ttt^4(X_b)\ \ \ \ \ \ \hbox{in}\ \MM_{\rm rat}\ .   \\
            \end{split}\]     
But then, the extension principle for families of cycles \cite[Lemma 3.2]{Vo}, \cite[Proposition 2.4]{V11} implies that the same is true for {\em all\/} $b\in B_1^\heartsuit$.

Let us now pick a point $b_0\in B_1^\heartsuit$ such that $\pi(b_0)=0$ in $B_1$, and define correspondences
  \[  \begin{split}  &\Phi_0:= \pi_\ast (f_{b_0})_\ast\Bigl( {}^\heartsuit\bar{\Phi}_1\vert_{b_0}\Bigr) \ \ \in\ A^3(X_0\times {S}_0)\ ,\\
                           &\Phi_0^{-1}:= \pi_\ast (f_{b_0})_\ast \Bigl( {}^\heartsuit\bar{\Phi}_1^{-1}\vert_{b_0}\Bigr) \ \ \in\ A^3({S}_0\times X_0)\ .\\
                         \end{split}\]
%

Let us consider the diagram
  \[ \begin{array}[c]{ccc}
         \ttt^4(X_{b_0}) & \xrightarrow{    {}^\heartsuit\bar{\Phi}_1\vert_{b_0}} & \ttt^2(\wt{S}_{b_0})(1) \\
      &&\\
    \ \ \ \   \uparrow{\scriptstyle {}^t  \Gamma_\pi}   && \ \ \ \   \uparrow{\scriptstyle {}^t  \Gamma_\pi}   \\
    &&\\
     \ttt^4(X_{0}) &   \dashrightarrow    & \ \ttt^2(S_{0})(1) \ .\\
     \end{array}\]
     (For the motive of the singular surface $S_0$, cf. Subsection \ref{ss:alex}.)
     As we have just seen, the upper horizontal arrow is an isomorphism. The vertical arrows are isomorphisms, because the transcendental motive is a birational invariant. For the same reason, if $S\to S_0$ denotes a minimal resolution of singularities, there is an isomorphism $\ttt^2(S_0)\cong \ttt(S)$.
     As a result, we get an isomorphism
     \[ \ttt^4(X_0)\ \cong\ \ttt^2(S)(1)\ \ \ \hbox{in}\ \MM_{\rm rat}\ .\]
   This closes the proof.\footnote{The last step in the proof is somewhat delicate and round-about. This is due to the fact that the formalism of relative correspondences only exists for smooth morphisms between smooth quasi-projective varieties. Hence, if we extend the correspondences $\Phi_1, \Phi_1^{-1}$ over all of $B_1$, we cannot make sense of their composition.}                                                 
  \end{proof} 
  
  \begin{remark}\label{later} It follows from Theorem \ref{verrak3} (by taking cohomology of the motives) that for every Verra fourfold $X$, with associated K3 surface $S$, there is an isomorphism
    \[ H^4_{tr}(X)\ \cong\ H^2_{tr}(S)(1)\ .\]
  Using a simultaneous resolution of singularities (as in the above proof), one can check this isomorphism is compatible with bilinear forms up to a non-zero coefficient.  
    \end{remark}

 \subsection{MCK decomposition} 
 The motivic relation between Verra fourfolds and K3 surfaces (Theorem \ref{verrak3}) allows us to establish an MCK decomposition:
  
  \begin{theorem}\label{verramck} Let $X$ be a Verra fourfold. Then $X$ has a self-dual MCK decomposition.
  \end{theorem}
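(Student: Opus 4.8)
The goal is to equip any Verra fourfold $X$ with a self-dual MCK decomposition, and the natural strategy is to transport such a decomposition across the motivic isomorphism $\ttt^4(X)\cong\ttt^2(S)(1)$ of Theorem \ref{verrak3}. The starting point is that K3 surfaces are known to carry an MCK decomposition (indeed any surface does, and for a K3 surface the refined multiplicativity of the Beauville--Voisin ring \cite{BV} is precisely what makes the decomposition multiplicative). So the plan is: first record a self-dual MCK decomposition on $S$; second, use the motive of $X$ to reduce the multiplicativity question on $X$ to a question about the transcendental piece; and third, verify the vanishing condition of Definition \ref{mck} by transporting it through $\Phi$.

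First I would write down the candidate CK decomposition for $X$. From the decomposition
  \[ \hh(X)= \ttt^4(X)\oplus \hh^4_{alg}(X)\oplus \bigoplus_{0\le j\le 4,\ j\not=2}\mathds{1}(j) \]
established earlier, one reads off projectors $\pi^i_X$ for all $i$: the pieces $\mathds{1}(j)$ give the even Künneth components in degrees $\neq 4$, and the degree-$4$ projector $\pi^4_X$ splits further as $\pi^{4,tr}_X+\pi^{4,alg}_X$ according to $\ttt^4(X)\oplus\hh^4_{alg}(X)$. Because all the non-transcendental summands come from the cone over $\PP^2\times\PP^2$ (equivalently, from algebraic classes that exist universally), this CK decomposition can be chosen to be self-dual, and the odd projectors vanish. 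So the CK decomposition itself is immediate; the content is multiplicativity.

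For multiplicativity I would use the structure of the Chow ring forced by Theorem \ref{verrak3}. The decomposition of $\hh(X)$ shows that $A^\ast_{hom}(X)$ is concentrated in $A^3_{hom}(X)=A^\ast(\ttt^4(X))$, which is identified with $A^2_{AJ}(S)=A^\ast(\ttt^2(S))$. The key intersection-theoretic input is that products land in the expected graded pieces: since $X$ is Fano with $A^4(X)=\QQ$, any product of two cycles in $A^2_{hom}$ or one cycle in $A^3_{hom}$ with anything positive-dimensional is controlled by the (completely decomposed) algebraic part, and the transcendental-transcendental product is governed by the small diagonal. Concretely, the condition $\pi^k_X\circ\Delta_X^{sm}\circ(\pi^i_X\otimes\pi^j_X)=0$ for $i+j\neq k$ need only be checked when one or more indices equal the transcendental degree $4$; all purely algebraic contributions vanish for dimension/weight reasons (they factor through cycles coming from the cone, which have trivial transcendental part). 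The surviving cases can be pulled back through $\Phi\otimes\Phi$ to the corresponding multiplicativity statement on $S$, where it holds because $S$ has an MCK decomposition.

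The main obstacle, and where I would expect to spend the real effort, is the compatibility of the small diagonal $\Delta_X^{sm}$ with the motivic isomorphism $\Phi$. Theorem \ref{verrak3} gives an isomorphism of motives but says nothing a priori about how the triple-product (ring) structure is carried across; the map $\Phi$ is a correspondence matching $\ttt^4(X)$ with $\ttt^2(S)(1)$ as motives, not as algebras. So the delicate step is to show that the relevant components of $\Delta_X^{sm}$ correspond under $\Phi$ to components of $\Delta_S^{sm}$, i.e. that the product map $\ttt^4(X)\otimes\ttt^4(X)\to \hh(X)$ is compatible with $\ttt^2(S)\otimes\ttt^2(S)\to\hh(S)$. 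For a K3 surface the crucial fact is that $\ttt^2(S)\otimes\ttt^2(S)$ maps to zero in $A^\ast$ in the relevant range (the product of two $A^2_{AJ}(S)$-classes is proportional to the Beauville--Voisin class), and I would transport exactly this vanishing to $X$: the product of two classes in $A^3_{hom}(X)$ lives in $A^6(X)=0$ for dimension reasons, so the transcendental-transcendental intersection vanishes automatically. This dimension miracle is what makes the Verra case work and sidesteps the hardest part of the K3 argument; the remaining mixed terms (transcendental times algebraic) are then handled by the universal-cycle/spread framework already set up, exactly as in the proof of Theorem \ref{main}.
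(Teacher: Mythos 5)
Your overall strategy (transport the problem to the associated K3 surface via Theorem \ref{verrak3}) is the same as the paper's, and your candidate CK decomposition -- algebraic projectors coming from the cone over $\PP^2\times\PP^2$, hence self-dual, with $\pi^4_X$ obtained by subtraction -- is exactly the one used. You also correctly identify the main obstacle: the motivic isomorphism says nothing a priori about compatibility with the small diagonal. But your proposed resolution of that obstacle does not work. The MCK condition is the vanishing of the \emph{cycle} $\pi^k_X\circ\Delta^{sm}_X\circ(\pi^i_X\otimes\pi^j_X)$ in $A^{8}(X\times X\times X)$; it is not the statement that the induced product map $A^3_{hom}(X)\otimes A^3_{hom}(X)\to A^6(X)$ vanishes. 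The latter is indeed trivially true since $\dim X=4$, but it is only a (weak) consequence of the former on $X$ itself: the cycle-level condition is precisely what controls products on all powers $X^m$, and the vanishing of a correspondence's action on $A^\ast(X)$ does not imply the vanishing of the correspondence in $A^8(X^3)$. Your ``dimension miracle'' therefore establishes at best a fragment of the weak splitting property, not multiplicativity of the CK decomposition; the case $(i,j,k)=(4,4,k)$ with $k\neq 8$ remains a genuinely nontrivial statement about a cycle on $X^3$. Similarly, the ``purely algebraic'' cases are sums of products $A\times B\times C$ whose coefficients are intersection numbers -- homological triviality of such a cycle does not formally give rational triviality ``for weight reasons'' without further argument.

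What the paper actually does to close this gap is quite different from invoking the MCK property of a single K3 surface. It works with the relative correspondences $\Gamma_{ijk}:=\pi^i_\XX\circ\Delta^{sm}_\XX\circ\bigl((p_{13})^\ast(\pi^j_\XX)\cdot (p_{24})^\ast(\pi^k_\XX)\bigr)$ over the whole moduli base, uses that $h(X_b)$ embeds into $h(S_b)(1)\oplus\bigoplus\mathds{1}(\ast)$ \emph{with a left-inverse} (so that $(\Phi_b,\Phi_b,\Phi_b)_\ast$ is injective on Chow groups of triple products), pushes $\Gamma_{ijk}$ forward to $\Ss^{3/B}$ and its lower relative powers, and then invokes the generalized Franchetta conjecture for $\Ss^{m/B}$ with $m\le 3$ (Theorem \ref{gfc}, from \cite{FLV}) to upgrade fibrewise homological triviality to fibrewise rational triviality; injectivity then kills $\Gamma_{ijk}\vert_b$ in $A^8((X_b)^3)$, and an extension argument handles the boundary of the base. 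The essential external input is thus the Franchetta property for the \emph{third relative power} of the universal family of degree-$2$ K3 surfaces -- an ingredient absent from your proposal and not replaceable by the Beauville--Voisin MCK decomposition of a single K3 surface.
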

  
  \begin{proof} As above, let $\XX\to B$ denote the universal family of Verra fourfolds. One can define a ``relative K\"unneth decomposition'' 
    \[ \pi^j_\XX\in A^4(\XX\times_B \XX)\ ,\ \ \ j=0,\ldots,8\ ,\]
    by defining $\pi^j_\XX$ for $j<4$ as the restriction of certain completely decomposed correspondences on $\bar{C}\times\bar{C}$ (where $\bar{C}$ is as before the cone over $\PP^2\times\PP^2$).
    Concretely, this means that 
    \[ \pi^0_\XX\vert_b= {1\over d}\, h^4\times X_b\ \ \ \hbox{in}\ A^4(X_b\times X_b)\ \]
    for some $d\in\NN$, where $h$ is a hyperplane section, and
    \[ \pi^2_\XX\vert_b= \ell_1\times\ell_1^\vee +  \ell_2\times\ell_2^\vee   \ \ \ \hbox{in}\ A^4(X_b\times X_b)\ ,\] 
    where the $\ell_i$ are a basis for $H^6(X_b)$, and the $\ell_i^\vee$ are a dual basis for $H^2(X_b)$.
    
    The other components are defined as $\pi^j_\XX={}^t \pi^{8-j}_\XX$ for $j>4$, and finally 
    \[ \pi^4_\XX:=\Delta_\XX-\sum_{j\not=4} \pi^j_\XX\ \ \ \in\ A^4(\XX\times_B \XX)\ .\]
    It is readily checked that the fibrewise restriction $\pi^j_\XX\vert_b$ is a self-dual Chow--K\"unneth decomposition, for every $b\in B$. 
    
    Let us check that this fibrewise restriction is {\em multiplicative\/}.
   Let
     \[ \Delta_\XX^{sm}:= (p_{12})^\ast(\Delta_XX)\cdot (p_{23})^\ast(\Delta_\XX)\ \ \ \in\ A^8(\XX\times_B \XX\times_B \XX) \]
     be the ``relative small diagonal''.
      Given a triple $(i,j,k)\in\NN^3$, we can form the composition of relative correspondences
     \[ \Gamma_{ijk}:= \pi_\XX^i\circ \Delta_\XX^{sm}\circ \bigl( (p_{13})^\ast (\pi_\XX^j)\cdot (p_{24})^\ast(\pi_\XX^k)\bigr)\ \ \ \in\ A^8(\XX\times_B \XX\times_B \XX)\ . \]    
     To establish multiplicativity of the fibrewise CK decomposition, we need to prove the fibrewise vanishing
      \begin{equation}\label{need} \Gamma_{ijk}\vert_{(X_b)^3}\stackrel{??}{=}0\ \ \ \hbox{in}\ A^8(X_b\times X_b\times X_b)\ \ \ \forall b\in B\ ,\ \forall i\not=j+k\ .\end{equation}
    
    Restricting to a smaller open $B^\prime\subset B$, there is a universal family $\Ss\to B^\prime$ of associated degree $2$ K3 surfaces, and a correspondence $\Phi\in A^3(\XX\times_{B^\prime} \Ss)\oplus \bigoplus A^\ast(\XX)$, inducing a fibrewise inclusion as submotive
    \[ \Phi_b\colon\ \ h(X_b)\ \hookrightarrow\ h(S_b)(1)\oplus \bigoplus{\mathds{1}}(\ast)\ \ \ \hbox{in}\ \MM_{\rm rat}\ \]
    (i.e., fibrewise there exists a left-inverse), cf. Theorem \ref{verrak3}.

%
%
    
    Let us now consider the ``product arrow'' (in the category $\MM_{\rm rat}^{/{B^\prime}}$ of relative Chow motives over $B^\prime$, cf. \cite[Chapter 8]{MNP})
    \[ \begin{split} (\Phi,\Phi,\Phi)\colon\ \  h(\XX^{3/{B^\prime}})\ \to\ h(\Ss^{3/{B^\prime}})(3)\oplus &\bigoplus h(\Ss^{2/{B^\prime}})(\ast)\oplus \\   
                &\bigoplus h(\Ss)(\ast)  \oplus \bigoplus h(B^\prime)(\ast)\ \ \ \hbox{in}\ \MM_{\rm rat}^{/{B^\prime}}\ .\\\end{split}\]
     Since fibrewise there exists a left-inverse, the fibrewise induced maps on Chow groups are injective: for all $b\in B^\prime$, one has injections
     \begin{equation}\label{inj}  (\Phi_b,\Phi_b,\Phi_b)_\ast\colon\ \  A^\ell\bigl((X_b)^3\bigr)\ \hookrightarrow\ A^{\ell-3}\bigl((S_b)^3\bigr)\oplus \bigoplus A^\ast\bigl((S_b)^2\bigr)\oplus \\   
                \bigoplus A^\ast(S_b)  \oplus \bigoplus \QQ\ .\end{equation}
Let us now pick up our relative correspondence $\Gamma_{ijk}$ with $i\not=j+k$, and restrict it to $A^8(\XX^{3/{B^\prime}})$. Under the assumption $i\not=j+k$, we have that
$\Gamma_{ijk}$ is fibrewise homologically trivial.
Hence, the image
  \[  (\Phi,\Phi,\Phi)_\ast (\Gamma_{ijk})\ \ \in\ A^\ast(\Ss^{3/{B^\prime}})\oplus \bigoplus A^\ast(\Ss^{2/{B^\prime}}) \oplus \bigoplus A^\ast(\Ss)\oplus \bigoplus \QQ \]
  is also fibrewise homologically trivial. But then, Theorem \ref{gfc} implies that the image is fibrewise rationally trivial:
  \[  (\Phi,\Phi,\Phi)_\ast (\Gamma_{ijk})\vert_b=0\ \ \ \hbox{in}\ A^\ast\bigl((S_b)^3\bigr)\oplus \bigoplus A^\ast\bigl((S_b)^2\bigr)\oplus \\   
                \bigoplus A^\ast(S_b)  \oplus \bigoplus \QQ\ ,\]
                for all $b\in B^\prime$.
       Now, in view of the injectivity (\ref{inj}), we may conclude that
       \[ \Gamma_{ijk}\vert_b=0\ \ \ \hbox{in}\ A^8\bigl((X_b)^3\bigr)\ \ \ \forall b\in B^\prime\ .\]
   An application of \cite[Lemma 3.2]{Vo} allows to extend this to the larger base $B$:
    \[     \Gamma_{ijk}\vert_b=0\ \ \ \hbox{in}\ A^8\bigl((X_b)^3\bigr)\ \ \ \forall b\in B\ ,\]                
    and so we have proven the required vanishing (\ref{need}).              
   \end{proof}

  We record an observation for later use:
  
  \begin{lemma}\label{pure0} Let $X$ be a Verra fourfold, let $S$ be an associated K3 surface, and let
    \[ \Psi\colon\ \ h(X)\ \to\  h(S)(1)\oplus \bigoplus {\mathds{1}}(\ast)\ \ \ \hbox{in}\ \MM_{\rm rat} \]
    be the map of motives that has a left-inverse (cf. Theorem \ref{verrak3}).
    Then $\Psi$ is of pure grade $0$, i.e. $\Psi\in A^3_{(0)}(X\times S)\oplus A^\ast_{(0)}(X)$.
   \end{lemma}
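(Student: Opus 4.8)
The plan is to spread $\Psi$ out over the moduli family and to reduce the vanishing of its nonzero graded pieces, via the left-inverse, to the generalized Franchetta conjecture for powers of the K3 surface (Theorem \ref{gfc}). First recall that, once $X\times S$ carries the product MCK decomposition (Proposition \ref{product}, applied to the Verra MCK of Theorem \ref{verramck} and the standard MCK of a K3 surface), saying that $\Psi$ is of pure grade $0$ is the same as saying that $\Psi_\ast$ preserves the bigrading. Writing $\Psi=\sum_r\Psi_{(r)}$ for the decomposition into bigraded components with respect to the two Chow--K\"unneth decompositions --- so that $(\Psi_{(r)})_\ast$ carries $A^i(X)_{(j)}$ into $A^{i-1}(S)_{(j+r)}$, and similarly for the Tate blocks --- the assertion to be proved is precisely that $\Psi_{(r)}=0$ for every $r\not=0$.

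Next I would observe that each $\Psi_{(r)}$ is \emph{universally defined} and, for $r\not=0$, fibrewise homologically trivial. Indeed, over the open $B^\prime\subset B$ of Proposition \ref{univ} the morphism $\Psi$ is the fibrewise restriction of a relative correspondence (the cycle $\Phi$ of Theorem \ref{verrak3}, together with the Tate corrections), and the Chow--K\"unneth projectors $\pi^\bullet_\XX$ and $\pi^\bullet_\Ss$ are themselves relative correspondences; hence $\Psi_{(r)}$ is the fibrewise restriction of a relative correspondence on $\XX\times_{B^\prime}\Ss$ (plus lower-order blocks over $\Ss$ and over $B^\prime$). Moreover $\Psi$ is a morphism in $\MM_{\rm rat}$, and the K\"unneth components of $X$ and of $S$ are algebraic (Lemma \ref{fano}, and the standard conjectures for K3 surfaces), so the homological realization of $\Psi$ commutes with these projectors. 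Commuting with the projectors means respecting the degree decomposition, i.e. the realization of $\Psi_{(r)}$ vanishes for $r\not=0$. Thus $\Psi_{(r)}\vert_b$ is homologically trivial for every $b\in B^\prime$.

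The crucial point is to promote fibrewise homological triviality to fibrewise \emph{rational} triviality, and here Theorem \ref{gfc} only sees powers of the K3 surface. I would therefore transport the problem to the K3 side by composing with the relative left-inverse $\Lambda$ of Theorem \ref{verrak3} (so that $\Lambda\circ\Psi=\ide_{h(X)}$ fibrewise over $B^\prime$). The composite $\Psi_{(r)}\circ\Lambda$ is then a relative endomorphism of $h(\Ss)(1)\oplus\bigoplus\mathds{1}(\ast)$, whose blocks are relative cycles on $\Ss^{2/B^\prime}$, on $\Ss$, and on $B^\prime$ respectively --- in every case on $\Ss^{m/B^\prime}$ with $m\le 2$. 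Since $\Psi_{(r)}$ is fibrewise homologically trivial, so is $\Psi_{(r)}\circ\Lambda$, and Theorem \ref{gfc} forces $\bigl(\Psi_{(r)}\circ\Lambda\bigr)\vert_b=0$ in the relevant Chow groups for all $b\in B^\prime$. Using $\Psi_{(r)}=(\Psi_{(r)}\circ\Lambda)\circ\Psi$ (which holds because $\Lambda\circ\Psi=\ide$) and restricting fibrewise, we get
\[ \Psi_{(r)}\vert_b=\bigl(\Psi_{(r)}\circ\Lambda\bigr)\vert_b\circ\Psi\vert_b=0\ \ \ \forall b\in B^\prime\ , \]
where the $h(S)(1)$-block lives in $A^3(X_b\times S_b)$ and the Tate blocks in $A^\ast(X_b)$. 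This is the desired vanishing over $B^\prime$; to reach all Verra fourfolds I would extend from $B^\prime$ exactly as in the last step of the proof of Theorem \ref{verrak3} (finite base-change, simultaneous resolution of the K3 family, and the extension principle \cite[Lemma 3.2]{Vo}).

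The main obstacle is precisely this transfer step: Theorem \ref{gfc} is available only for self-products of the K3 surface, not for $X\times S$, so one cannot feed $\Psi_{(r)}$ into it directly. The device of converting a correspondence $X\rightsquigarrow S$ into an endomorphism of the K3-based motive by composing with $\Lambda$ --- together with the injectivity supplied by $\Lambda\circ\Psi=\ide$ --- is what makes Franchetta applicable, and it mirrors the $(\Phi,\Phi,\Phi)$-argument already used in the proof of Theorem \ref{verramck}. A minor bookkeeping point is to treat the Tate blocks of $\Psi$ on the same footing; these land on $\Ss$ or on the base and are covered by the same argument (by Theorem \ref{gfc} with $m\le 1$).
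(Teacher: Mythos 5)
Your proof is correct and follows essentially the same route as the paper: spread $\Psi$ out over the base, observe that its off-grade components are universally defined and fibrewise homologically trivial, transport them to self-products of the K3 via the left-inverse structure so that Theorem \ref{gfc} (with $m\le 2$) applies, and recover the vanishing of the components themselves from left-invertibility. The only (cosmetic) difference is that the paper pushes the graded piece $\Gamma_j=(\pi^j_{\XX\times_B\Ss})_\ast(\Psi_\Ss)$ forward under $(\Psi,\Delta_\Ss)_\ast$ into $A^\ast(S_b\times S_b)$ and invokes injectivity of that map, whereas you compose with $\Lambda$ on the right and use $\Psi_{(r)}=(\Psi_{(r)}\circ\Lambda)\circ\Psi$; these are interchangeable.
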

   
   \begin{proof} This is an easy corollary of the generalized Franchetta type result (Theorem \ref{gfc}). Indeed, let $\pi^j_\XX$ be the ``relative K\"unneth decomposition'' as in the proof of 
   Theorem \ref{verramck}. The universal family $\Ss\to B$ of smooth genus $2$ K3 surfaces also has a ``relative K\"unneth decomposition'' $\pi^j_\Ss\in A^2(\Ss\times_B \Ss)$. This induces a
   ``relative K\"unneth decomposition'' 
     \[ \pi^j_{\XX\times_B \Ss}:= \sum_{k+\ell=j} (p_\XX)^\ast \pi^k_\XX \cdot (p_\Ss)^\ast \pi^\ell_\Ss\ \ \ \in A^6\bigl( (\XX\times_B \Ss)\times_B (\XX\times_B \Ss)\bigr)\ ,\]
     where $p_\XX, p_\Ss$ are the obvious projections from $\XX\times_B \Ss\times_B \XX\times_B \Ss$ to $\XX\times_B \XX$ resp. to $\Ss\times_B \Ss$.
     
    As we have seen, the correspondence labelled $\Psi$ in the statement of the lemma is the restriction to a fibre of a relative correspondence
     \[ \Psi=(\Psi_\Ss, \Psi_0,\ldots,\Psi_4)\ \ \in\ A^3(\XX\times_B \Ss)\oplus \bigoplus_{i=0}^4 A^i(\XX)\ .\]
    We consider the relative cycle
     \[  \Gamma_j:= (\pi^j_{\XX\times_B \Ss})_\ast (\Psi_\Ss)  \ \ \in\ A^3(\XX\times_B \Ss)\ ,\ \ \ j\not=6\ .\]
     Clearly, the relative cycle $\Gamma_j$ is fibrewise homologically trivial for $j\not=6$. It follows that
     \[ (\Psi_{},\Delta_\Ss)_\ast (\Gamma_j)\ \ \in\ A^2(\Ss\times_B \Ss)\oplus \bigoplus A^\ast(\Ss) \]
     is also fibrewise homologically trivial. But then Theorem \ref{gfc} (with $m=2$) implies that 
     \[ \Bigl((\Psi_{},\Delta_\Ss)_\ast (\Gamma_j)\Bigr)\vert_b=0\ \ \in\ A^2(\Ss\times_B \Ss)\oplus \bigoplus A^\ast(\Ss)\ ,\ \ \ \forall b\in B\ . \]
     But as $\Psi$ has a fibrewise left-inverse, the map
     \[   (\Psi_{}\vert_b,\Delta_\Ss\vert_b)_\ast   \colon\ \ A^\ast(X_b\times S_b)\ \to\ A^{\ast-1}(S_b\times S_b)\oplus \bigoplus A^\ast(S_b) \]
     is injective, and so
     \[ (\Gamma_j)\vert_b=0\ \ \hbox{in}\ A^3(X_b\times S_b)\ \ \ \forall b\in B\ .\]
     This means that 
       \[  (\Psi_\Ss)\vert_b =(\pi^6_{X_b\times S_b})_\ast (\Psi_\Ss)\vert_b\ \ \ \hbox{in}\ A^3(X_b\times S_b)\ ,\]    
       i.e. $(\Psi_\Ss)\vert_b$ is of pure grade $0$.
     
     The same reasoning, using theorem \ref{gfc} with $m=1$, gives that the $(\Psi_i)\vert_b$, $i=0,\ldots,4$ are of pure grade $0$. We conclude that
     \[ \Psi\vert_b= (\Psi_\Ss, \Psi_0,\ldots,\Psi_4)\vert_b \ \ \ \in\ A^3(X_b\times S_b)\oplus A^\ast(X_b) \]
     is of pure grade $0$ for all $b\in B$, which is what we wanted to prove.     
   \end{proof}

  \section{Corollaries}

 \subsection{The Chow ring of $X$}
    
    \begin{definition}\label{can} Let $X$ be a Verra fourfold, and let $\pi\colon X\to\PP^2\times\PP^2$ denote the double cover. We define two curve classes
    \[  \begin{split}  \ell_1&:= \pi^\ast(h\times h^2)\ ,\\
                              \ell_2&:=\pi^\ast(h^2\times h)\ \ \ \in\ A^3(X)\ ,\\
                             \end{split} \]
                         where $h\in A^1(\PP^2)$ is the class of a hyperplane.     
       \end{definition}
    
    \begin{remark} Clearly, the classes $\ell_1,\ell_2$ generate $H^6(X)\cong \QQ^2$.
    \end{remark}

  \begin{corollary}\label{subring} Let $X$ be a Verra fourfold. The subalgebra
   \[ R^\ast(X):= < A^1(X)_{\QQ}, A^2(X)_{\QQ}, c_j(T_X), \ell_1, \ell_2>\ \ \ \subset\ A^\ast(X)_{\QQ}\ \]
   injects into cohomology via the cycle class map.  
  \end{corollary}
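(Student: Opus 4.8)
The plan is to leverage the multiplicative Chow--Künneth decomposition furnished by Theorem \ref{verramck}, which endows $A^\ast(X)_\QQ$ with a bigrading $A^i(X)_{(j)}=(\pi^{2i-j}_X)_\ast A^i(X)$ whose hallmark is that the intersection product satisfies $A^i(X)_{(j)}\cdot A^{i'}(X)_{(j')}\subseteq A^{i+i'}(X)_{(j+j')}$. The entire statement then reduces to two points: (I) every generator of $R^\ast(X)$ lies in the grade-$0$ part $\bigoplus_i A^i(X)_{(0)}$, and (II) the cycle class map is injective on $\bigoplus_i A^i(X)_{(0)}$. Granting these, multiplicativity forces $R^\ast(X)\subseteq\bigoplus_i A^i(X)_{(0)}$ (a product of grade-$0$ classes is again grade $0$, and the unit is in $A^0(X)_{(0)}$), and (II) then yields the desired injectivity.

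Step (II) is formal given the transcendental decomposition recorded just before the theorem. For $i\neq 3$ one has $A^i_{hom}(X)=0$ (this is exactly the assertion $A^\ast_{hom}(X)=A^3_{hom}(X)$, together with $A^4(X)=\QQ$ from Lemma \ref{fano}), so the cycle class map is already injective on all of $A^i(X)$. For $i=3$, note that the transcendental projector $\pi^{4,tr}_X$ is a sub-projector of $\pi^4_X=\pi^{6-2}_X$, so that $A^3_{hom}(X)=A^\ast(\ttt^4(X))\subseteq A^3(X)_{(2)}$; since $A^3(X)_{(0)}$ and $A^3(X)_{(2)}$ are in direct sum, we get $A^3(X)_{(0)}\cap A^3_{hom}(X)=0$, i.e. the cycle class map is injective on $A^3(X)_{(0)}$.

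For Step (I), observe first that any grade-$(j\neq 0)$ component is homologically trivial (the class of $(\pi^{2i-j}_X)_\ast\alpha$ is $(\pi^{2i-j}_X)_\ast[\alpha]$, which vanishes for $j\neq 0$ since $[\alpha]\in H^{2i}(X)$). Hence $A^1_{hom}(X)=A^2_{hom}(X)=0$ forces $A^1(X)=A^1(X)_{(0)}$ and $A^2(X)=A^2(X)_{(0)}$, so all generators from $A^1(X)$, $A^2(X)$, as well as $c_j(T_X)$ for $j\in\{1,2,4\}$, are automatically grade $0$. The classes $\ell_1,\ell_2\in A^3(X)$ are grade $0$ by a direct computation with $\pi^6_X={}^t\pi^2_X$: since $\pi^2_X$ was built fibrewise as $\ell_1\times\ell_1^\vee+\ell_2\times\ell_2^\vee$ with $\{\ell_i\}$ a basis of $H^6$ dual to $\{\ell_i^\vee\}\subset H^2$, the standard correspondence computation gives $(\pi^6_X)_\ast\ell_i=\sum_k\deg(\ell_i\cdot\ell_k^\vee)\,\ell_k=\ell_i$, whence $\ell_i\in A^3(X)_{(0)}$.

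The main obstacle is the one remaining generator, $c_3(T_X)\in A^3(X)$: I must rule out a grade-$2$ component. By the analysis above its homologically trivial (equivalently grade-$2$) component is $(\pi^4_X)_\ast c_3(T_X)\in A^3_{hom}(X)$, and the key feature I would exploit is that $c_3(T_X)$ is \emph{generically defined}, being the fibrewise restriction of $c_3(T_{\XX/B})\in A^3(\XX)$; composing with the relative projector $\pi^4_\XX$ keeps it generically defined and fibrewise homologically trivial. Applying the generically defined, pure grade-$0$ injection $\Psi$ of Lemma \ref{pure0} (realised by the relative correspondence $\Psi\in A^3(\XX\times_B\Ss)$), this component is carried to a generically defined, fibrewise homologically trivial class in $A^2_{hom}(S_b)$. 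The generalized Franchetta conjecture for the universal family of degree-$2$ K3 surfaces (Theorem \ref{gfc}, with $m=1$) renders this class fibrewise rationally trivial; since $\Psi$ admits a fibrewise left-inverse, $\Psi_\ast$ is fibrewise injective, and therefore $(\pi^4_{X_b})_\ast c_3(T_{X_b})=0$ for every $b\in B$. Thus $c_3(T_X)\in A^3(X)_{(0)}$ for every Verra fourfold, completing Step (I) and hence the proof. I expect this transfer-and-Franchetta argument for $c_3(T_X)$ to be the only genuinely delicate ingredient, everything else being bookkeeping with the bigrading.
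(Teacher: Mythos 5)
Your proof is correct, and its heart --- the transfer of the generically defined class $(\pi^4_\XX)_\ast c_3(T_{\XX/B})$ to the universal family of degree $2$ K3 surfaces via the grade-$0$ correspondence, followed by the generalized Franchetta result (Theorem \ref{gfc}) and the fibrewise injectivity on $A^3_{hom}$ --- is exactly the paper's argument. You diverge from the paper in two minor sub-steps, both legitimately. First, you place $\ell_1,\ell_2$ in $A^3_{(0)}(X)$ by the direct computation $(\pi^6_X)_\ast\ell_i=\sum_k\deg(\ell_k^\vee\cdot\ell_i)\,\ell_k=\ell_i$, which is cleaner than the paper's route (the paper reruns the Franchetta argument for these classes); this works because the CK decomposition of Theorem \ref{verramck} is built precisely from these $\ell_i$ and a dual basis. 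Second, for the injectivity of the cycle class map on $A^3_{(0)}(X)$, the paper argues via the support of $\pi^6_X$ on $X\times C$ with $C$ a curve, factoring the identity through $\bigoplus A^0_{hom}(C_j)=0$, whereas you deduce $A^3_{(0)}(X)\cap A^3_{hom}(X)=0$ from $A^3_{hom}(X)\subseteq A^3_{(2)}(X)$ and the directness of the bigrading. Your conclusion is right, but the stated justification (that $\pi^{4,tr}_X$ is a sub-projector of the particular $\pi^4_X$ of Theorem \ref{verramck}) is not immediate, since the transcendental decomposition is built from a possibly different CK decomposition; the clean argument is simply that every $\pi^k_X$ with $k\neq 4$ in that construction is completely decomposed (a restriction of a sum of products of cycles from $\bar C\times\bar C$), hence annihilates $A^\ast_{hom}(X)$, forcing $\alpha=(\pi^4_X)_\ast\alpha$ for any $\alpha\in A^3_{hom}(X)$. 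With that one-line repair the proof is complete.
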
  
    
   \begin{proof} It follows from Theorem \ref{verramck} (via Definition \ref{mck}) that the Chow ring of $X$ admits a bigrading $A^\ast_{(\ast)}(X)$.
   Since $X$ is rationally connected, one has $A^j_{hom}(X)=0$ for $j\le 2$. Since clearly $(\pi_X^k)_\ast A^j(X)\subset A^j_{hom}(X)$ for all $k\not= 2j$, it follows that
     \[ A^j(X)=A^j_{(0)}(X)\ \ \ \forall j\le 2\ .\]
    It remains to show that the Chern class $c_3(T_X)$ is in $A^3_{(0)}(X)$. To this end, we consider again the universal family $\XX\to B$ of Verra fourfolds. The MCK decomposition obviously exists relatively, i.e. we have relative cycles 
    \[ \pi_\XX^j\in A^4(\XX\times_B \XX)\ ,\ \ j=0,\ldots,8\ ,\]
    restricting to an MCK decomposition on each fibre $X_b$.
    (Indeed, the $\pi_{X_b}^j$ for $j\not=4$ are defined in terms of cycles coming from (the cone over) $\PP^2\times\PP^2$, and the remaining component $\pi_{X_b}^4$ is obtained by subtraction from the diagonal.) 
    
    Let us now consider the relative cycle
    \[ \Gamma:=  (\pi_\XX^j)_\ast c_3(T_{\XX/B})\ \ \ \in A^3(\XX)\ , \ \ j\not= 6\ .\]
   Since $j\not=6$, the restriction $\Gamma\vert_{X_b}$ is homologically trivial for each $b\in B$.
   
   Restricting to a smaller open $B^\prime\subset B$, there exists a universal family of associated K3 surfaces $\Ss\to B$, and a relative correspondence $\Phi\in A^3(\XX\times_{B^\prime} \Ss)$ as in Theorem \ref{verrak3}. The relative cycle
   \[ \Phi_\ast(\Gamma)\ \ \in\  A^2(\Ss)  \]
   is fibrewise homologically trivial (since $\Gamma$ is fibrewise homologically trivial). Theorem \ref{gfc} then implies that $\Phi_\ast(\Gamma)$ is fibrewise rationally trivial:
   \[  (\Phi_b)_\ast(\Gamma\vert_{X_b})= \bigl(  \Phi_\ast(\Gamma)\bigr)\vert_b=0\ \ \hbox{in}\  A^2(S_b)\oplus   \bigoplus A^0(\hbox{point})\ .  \]  
   But we have seen (Theorem \ref{verrak3}) that
   \[ (\Phi_b)_\ast\colon\ \ A^3_{hom}(X_b)\ \to\ A^2_{hom}(S_b)\oplus \bigoplus \QQ \]
   is injective, and so $\Gamma\vert_{X_b}=0$ in $A^3(X_b)$ for all $b\in B$. This shows that $c_3(T_{X_b})\in A^3_{(0)}(X_b)$ for all $b\in B$.
   
  The same argument also proves that $\ell_1,\ell_2\in A^3_{(0)}(X)$. Since $A^\ast_{(0)}(X)\subset A^\ast(X)$ is a subring, we may thus conclude that
    \[ R\ \subset\ A^\ast_{(0)}(X)\ .\]
    It remains to show that the cycle class map induces an injection
    \[ A^\ast_{(0)}(X)\ \to\ H^\ast(X) \ .\]
    This is only non-trivial for $A^3_{(0)}(X):= (\pi^6_X)_\ast A^3(X)$. However, the K\"unneth component $\pi^6_X$ is supported on $X\times C$ where $C\subset X$ is a union of irreducible curves $C=\cup C_j$. The action of $\pi^6_X$ on $A^3_{(0)}(X)\cap A^3_{hom}(X)$ (which is the identity) factors over $\oplus A^0_{hom}(C_j)=0$. It follows that
     \[ A^3_{(0)}(X)\cap A^3_{hom}(X)=0\ ,\]
     as requested.   
   \end{proof}

 \begin{remark} The curve classes $\ell_1,\ell_2\in A^3(X)$ play the role of the Beauville--Voisin distinguished $0$-cycle $\mathfrak{o}_S={1\over 24}c_2(T_S)\in A^2(S)$ on a K3 surface $S$ \cite{BV}. Indeed, Corollary \ref{subring} implies that 
   \[ A^1(X)\cdot A^2(X)\ \ \in\ A^3_{(0)}(X)=\QQ[\ell_1]\oplus \QQ[\ell_2]\ \ \subset A^3(X)\ .\]
  \end{remark}
   
 \begin{remark} One can add more cycles to the subring $R^\ast(X)$ of Corollary \ref{subring}. For example, let $D\subset X$ denote the ramification locus of the double cover $\pi\colon X\to\PP^2\times\PP^2$. Then $D$ is isomorphic to the Verra threefold $Y\subset\PP^2\times\PP^2$ (i.e., the branch locus). Hence,
  \[ \begin{split} \ima\bigl( A_1(D)\to A^3(X)\bigr) &= \ima\bigl( A_1(Y)\to A^3(X)\bigr)\\
  &\subset \ima \bigl( A^3(\PP^2\times\PP^2)\to A^3(X)\bigr)=\QQ[\ell_1]\oplus \QQ[\ell_2]=A^3_{(0)}(X)\ ,\\
  \end{split}\]
  and so $ \ima\bigl( A_1(D)\to A^3(X)\bigr)$ may be added to the subring $R^\ast(X)$.
  
   The divisor $D\subset X$ has behaviour akin to that of {\em constant cycle subvarieties\/} \cite{Huy}.
   
 \end{remark}

 \subsection{The Chow ring of $X^m$}
 
 More generally, one can consider the Chow ring of self-products of $X$:
 
  \begin{corollary}\label{corm} Let $X$ be a Verra fourfold, and let $m\in\NN$. Let $R^\ast(X^m)\subset A^\ast(X^m)$ be the $\QQ$-subalgebra
   \[ R^\ast(X^m):=  \langle (p_i)^\ast A^1(X), (p_i)^\ast A^2(X), (p_{ij})^\ast(\Delta_X), (p_i)^\ast c_\ell(T_X)\rangle\ \ \ \subset\ A^\ast(X^m)\ .\]
   (Here $p_i\colon X^m\to X$ and $p_{ij}\colon X^m\to X^2$ denote projection to the $i$th factor, resp. to the $i$th and $j$th factor.)
   
  The cycle class map induces injections
   \[ R^j(X^m)\ \hookrightarrow\ H^{2j}(X^m)\]
   in the following cases:
   
   \begin{enumerate}
      
   \item $m=2$ and $j\ge 5$;
   
   \item $m=3$ and $j\ge 9$.
   \end{enumerate}
       \end{corollary}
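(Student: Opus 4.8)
The plan is to extract the statement from the multiplicative Chow--K\"unneth structure, by showing that in the stated codimension ranges the degree--zero part $A^\ast_{(0)}(X^m)$ is spanned by products of algebraic classes pulled back from the factors (a ``purely Tate'' part), which manifestly injects into cohomology. Throughout I use that, by Theorem \ref{verramck} and Proposition \ref{product}, the self--product $X^m$ carries the product MCK decomposition
\[ \pi^i_{X^m}=\sum_{k_1+\cdots+k_m=i}\pi^{k_1}_X\otimes\cdots\otimes\pi^{k_m}_X\ , \]
and hence a bigrading $A^\ast_{(\ast)}(X^m)$ whose degree--zero piece $A^\ast_{(0)}(X^m)$ is a subring (Definition \ref{mck}).

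The first step is to verify the inclusion $R^\ast(X^m)\subseteq A^\ast_{(0)}(X^m)$. As $A^\ast_{(0)}$ is a subring stable under the pullbacks $(p_i)^\ast$ and $(p_{ij})^\ast$ (these being compatible with the product decomposition), it suffices to place each generator in grade $0$. The classes in $A^1(X)$ and $A^2(X)$ lie in $A^1_{(0)}(X)$, $A^2_{(0)}(X)$ because $X$ is rationally connected, so $A^{\le 2}_{hom}(X)=0$; the Chern classes $c_\ell(T_X)$ lie in $A^\ell_{(0)}(X)$ by Corollary \ref{subring}. For the diagonal, writing $\Delta_X=\sum_i\pi^i_X$ and using the self--duality ${}^t\pi^a_X=\pi^{8-a}_X$ together with the mutual orthogonality of the $\pi^i_X$, a direct computation gives $(\pi^8_{X^2})_\ast\pi^i_X=\pi^i_X$ for every $i$ (the only surviving term in the sum being the one indexed by $a=8-i$, $b=i$), so that $\Delta_X\in A^4_{(0)}(X^2)$.

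The heart of the argument is a bookkeeping of the degree--zero Chow groups of the motivic summands of $X^m$. Using the decomposition $\hh(X)=\ttt^4(X)\oplus\bigoplus_{0\le c\le 4}\mathds{1}(c)^{\oplus m_c}$ recalled above, the motive $\hh(X^m)$ splits into summands indexed by the choice, in each of the $m$ factors, of either the transcendental part $\ttt^4(X)$ or a Tate twist $\mathds{1}(c)$ with $0\le c\le 4$. Consider a summand with exactly $a$ transcendental factors and with Tate twists summing to $t$; it is cut out by $\pi^{4a+2t}_{X^m}$, so a cycle in it of codimension $i$ has grade $2i-(4a+2t)$, and its degree--zero part can be nonzero only in codimension $2a+t$. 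Two facts then control these summands. When $a=1$ the summand is $\ttt^4(X)\otimes\mathds{1}(t)$, whose Chow group is $A^3_{hom}(X)$ twisted into a single codimension and of pure grade $2$ (because $A^\ast(\ttt^4(X))=A^3_{hom}(X)$ is concentrated in codimension $3$); hence a \emph{single} transcendental factor contributes \emph{nothing} to $A^\ast_{(0)}$. When $a\ge 2$, the degree--zero contribution sits in codimension
\[ 2a+t\ \le\ 2a+4(m-a)\ =\ 4m-2a\ \le\ 4m-4\ . \]
Consequently, for $j\ge 4m-3$ every summand carrying a transcendental factor contributes nothing to $A^j_{(0)}(X^m)$, i.e. $A^j_{(0)}(X^m)$ is spanned by the decomposable classes $(p_1)^\ast\eta_1\cdots (p_m)^\ast\eta_m$ with $\eta_\ell\in A^\ast_{(0)}(X)$.

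Finally, such decomposable classes inject into cohomology: by Corollary \ref{subring} (the case $m=1$) the cycle class map is injective on $A^\ast_{(0)}(X)$, whose image is the even algebraic cohomology of $X$, and by the K\"unneth isomorphism $H^\ast(X^m)=H^\ast(X)^{\otimes m}$ the corresponding products remain linearly independent. Since $R^j(X^m)\subseteq A^j_{(0)}(X^m)$, this proves injectivity of $R^j(X^m)\hookrightarrow H^{2j}(X^m)$ for $j\ge 4m-3$, giving $j\ge 5$ when $m=2$ and $j\ge 9$ when $m=3$. The main obstacle is the codimension bookkeeping of the previous paragraph --- in particular the vanishing of the degree--zero part of a \emph{single} transcendental factor, which is exactly what keeps the transcendental summands out of codimension $4m-3$ even though such a factor, twisted by $\mathds{1}(4)$, would otherwise reach codimension $4m-2$; everything else is formal, so in these ranges no appeal to the generalized Franchetta property (Theorem \ref{gfc}) is needed.
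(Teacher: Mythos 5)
Your proof is correct, and the two halves of it line up with the paper's two claims, but the injectivity half takes a genuinely different route. For the inclusion $R^\ast(X^m)\subseteq A^\ast_{(0)}(X^m)$ you and the paper do essentially the same thing (the paper cites \cite[Lemma 1.4]{SV} for $\Delta_X\in A^4_{(0)}(X\times X)$ where you recompute it from self-duality and orthogonality; both use Corollary \ref{subring} for the Chern classes). For the injectivity of $A^j_{(0)}(X^m)\to H^{2j}(X^m)$, the paper transports the problem to the associated K3 surface: it uses the grade-preserving isomorphism $\hh(X)\cong\hh(S)(1)\oplus\bigoplus\mathds{1}(\ast)$ (Lemma \ref{pure0}, which rests on the Franchetta-type Theorem \ref{gfc}) to identify $A^j_{(0)}(X^m)$ with a sum of pieces $A^i_{(0)}(S^k)$, and then invokes the known injectivity $A^i_{(0)}(S^k)\hookrightarrow H^{2i}(S^k)$ for $i\ge 2k-1$ from \cite{V6}, \cite{acs}. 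You instead do the codimension bookkeeping directly on $X^m$, using only that $A^\ast(\ttt^4(X))=A^3_{hom}(X)$ is concentrated in codimension $3$ and grade $2$, and conclude that $A^j_{(0)}(X^m)$ is supported on the Tate summands for $j\ge 4m-3$; this is in effect the argument underlying the cited K3 statement, transplanted to the fourfold, and it recovers exactly the thresholds $j\ge 5$ ($m=2$) and $j\ge 9$ ($m=3$). What your route buys is self-containedness and transparency of the numerology (and it shows the injectivity step needs neither the K3 relation nor Theorem \ref{gfc}, although Corollary \ref{subring}, which you still need for $c_3(T_X)$, does rest on \ref{gfc}); what the paper's route buys is brevity given the machinery already installed, and it keeps the K3 analogy in view. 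One point you should make explicit: your bookkeeping requires the splitting $\hh(X)=\ttt^4(X)\oplus\hh^4_{alg}(X)\oplus\bigoplus\mathds{1}(j)$ to be a refinement of the \emph{same} (M)CK decomposition used to define the bigrading, i.e. $\pi^4_X=\pi^{4,tr}_X+\pi^{4,alg}_X$ orthogonally with $\hh^4_{alg}(X)\cong\mathds{1}(2)^{\oplus\rho}$; this is the standard construction the paper also relies on, so it is not a gap, but it is the hinge on which your grade computation for each tensor summand turns.
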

 
 \begin{proof} Theorem \ref{main}, in combination with Proposition \ref{product}, ensures that $X^m$ has an MCK decomposition, and so $A^\ast(X^m)$ has the structure of a bigraded ring under the intersection product. The corollary is now implied by the combination of the two following claims:

\begin{claim}\label{c1} There is inclusion
  \[ R^\ast(X^m)\ \ \subset\ A^\ast_{(0)}(X^m)\ .\]
  \end{claim}
  
 \begin{claim}\label{c2} The cycle class map induces injections
   \[ A^j_{(0)}(X^m)\ \hookrightarrow\ H^{2j}(X^m)\ \] 
   provided $m=2$ and $j\ge 5$, or $m=3$ and $j\ge 9$.
\end{claim}

To prove Claim \ref{c1}, we note that $A^k(X)=A^k_{(0)}(X)$ for $k\not=3$, and $c_3(T_X)$ is in $A^3_{(0)}(X)$ (Corollary \ref{subring}), and so all Chern classes $c_\ell(T_X)$ are in $A^\ast_{(0)}(X)$. The fact that $\Delta_X\in A^4_{(0)}(X\times X)$ is a general fact for any $X$ with a self-dual MCK decomposition \cite[Lemma 1.4]{SV}. Since the projections $p_i$ and $p_{ij}$ are pure of grade $0$ \cite[Corollary 1.6]{SV}, and 
$A^\ast_{(0)}(X^m)$ is a ring under the intersection product, this proves Claim \ref{c1}.

To prove Claim \ref{c2}, we observe that Manin's blow-up formula \cite[Theorem 2.8]{Sc} gives an isomorphism of motives
  \[ \hh(X)\cong \hh(S)(1)\oplus {\mathds{1}} \oplus  {\mathds{1}}(1)^{} \oplus {\mathds{1}}(2)^{}   \oplus  {\mathds{1}}(3)^{} \oplus  {\mathds{1}}(4)\ \ \ \hbox{in}\ \MM_{\rm rat}\ .\]
  Moreover, we know from Lemma \ref{pure0} that the correspondence inducing this isomorphism is of pure grade $0$.
  
 In particular, taking Chow groups on both sides it follows that for any $m\in\NN$ we have isomorphisms 
  \[ A^j(X^m)\cong A^{j-m}(S^m)\oplus \bigoplus_{k=0}^4  A^{j-m+1-k}(S^{m-1}) \oplus \bigoplus A^\ast(S^{m-2})\oplus \bigoplus_{\ell\ge 3} A^\ast(S^{m-\ell})  \ ,  \] 
  and this isomorphism respects the $A^\ast_{(0)}()$ parts. Claim \ref{c2} now follows from the fact that for any surface $S$ with an MCK decomposition, and any $m\in\NN$, the cycle class map induces injections
  \[ A^i_{(0)}(S^m)\ \hookrightarrow\ H^{2i}(S^m)\ \ \ \forall i\ge 2m-1\ \]
  (this is noted in \cite[Introduction]{V6}, cf. also \cite[Proof of Lemma 2.20]{acs}).
  \end{proof}

  \subsection{Motives of isogenous Verra fourfolds}
  
  \begin{corollary}\label{isog} Let $X,X^\prime$ be two Verra fourfolds that are isogenous (i.e., there exists an isomorphism of $\QQ$-vector spaces $H^4(X,\QQ)\cong H^4(X^\prime,\QQ)$ compatible with the Hodge structures and with cup product). Then there is an isomorphism of Chow motives
    \[ \hh(X)\ \xrightarrow{\cong}\ \hh(X^\prime)\ \ \ \hbox{in}\ \MM_{\rm rat}\ .\]
    \end{corollary}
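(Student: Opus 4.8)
The plan is to reduce the statement for Verra fourfolds to the (deep) corresponding statement for K3 surfaces, using the motivic relation of Theorem \ref{verrak3}.

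First I would record the clean motivic decomposition that is already available. Combining Theorem \ref{verrak3} with the splitting used in the proof of Corollary \ref{corm} (and the pure-grade statement of Lemma \ref{pure0}), for any Verra fourfold $X$ with associated K3 surface $S$ there is an isomorphism of Chow motives
\[ \hh(X)\ \cong\ \hh(S)(1)\oplus \mathds{1}\oplus\mathds{1}(1)\oplus\mathds{1}(2)\oplus\mathds{1}(3)\oplus\mathds{1}(4)\ \ \ \hbox{in}\ \MM_{\rm rat}\ . \]
The point is that the Tate summand $T:=\mathds{1}\oplus\mathds{1}(1)\oplus\mathds{1}(2)\oplus\mathds{1}(3)\oplus\mathds{1}(4)$ is \emph{independent} of $X$: the extra N\'eron--Severi classes of $X$ are absorbed into $\hh^2(S)(1)\subset\hh(S)(1)$. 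The same decomposition holds for $X'$ with an associated K3 surface $S'$, so it suffices to produce an isomorphism $\hh(S)\cong\hh(S')$ in $\MM_{\rm rat}$.

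Second I would transport the isogeny from the fourfolds down to the K3 surfaces. The given isomorphism $H^4(X,\QQ)\cong H^4(X',\QQ)$ is a morphism of Hodge structures respecting cup product, hence it restricts to an isomorphism of transcendental parts $H^4_{tr}(X)\cong H^4_{tr}(X')$ (the transcendental part being the orthogonal complement of the algebraic classes, which are preserved because the Hodge conjecture holds for Verra fourfolds by Lemma \ref{fano}). By Remark \ref{later} there are isomorphisms $H^4_{tr}(X)\cong H^2_{tr}(S)(1)$ and $H^4_{tr}(X')\cong H^2_{tr}(S')(1)$, each compatible with the bilinear forms up to a nonzero scalar. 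Composing, one obtains a rational Hodge similitude $H^2_{tr}(S)\cong H^2_{tr}(S')$; in particular $S$ and $S'$ are isogenous K3 surfaces (the overall scalar is harmless, since rescaling the cup-product form does not alter the isogeny class nor the Chow motive of a K3 surface).

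Third I would invoke the key external input: by Huybrechts' theorem on motives of isogenous K3 surfaces \cite{Huy}, isogenous complex K3 surfaces have isomorphic Chow motives, so $\hh(S)\cong\hh(S')$ in $\MM_{\rm rat}$. Twisting by $(1)$ and adjoining the fixed Tate summand $T$, the decomposition of the first step gives
\[ \hh(X)\ \cong\ \hh(S)(1)\oplus T\ \cong\ \hh(S')(1)\oplus T\ \cong\ \hh(X')\ \ \ \hbox{in}\ \MM_{\rm rat}\ , \]
which is what we want. The main obstacle is precisely this third step: the assertion that isogenous K3 surfaces have isomorphic Chow motives is the genuinely hard ingredient, and it is exactly the result one wishes to black-box. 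Everything specific to Verra fourfolds — the splitting $\hh(X)\cong\hh(S)(1)\oplus T$ and the transfer of the isogeny to the K3 level — is comparatively formal once Theorem \ref{verrak3} and Remark \ref{later} are available; the only delicate bookkeeping is verifying that the scalars in the form-compatibility of Remark \ref{later} do not obstruct the passage from a Hodge \emph{similitude} of transcendental lattices to genuine isogeny of the associated K3 surfaces.
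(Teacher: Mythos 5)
Your proposal is correct and follows essentially the same route as the paper: transport the Hodge isometry of $H^4_{tr}$ to a Hodge isometry of the transcendental lattices of the associated K3 surfaces via the correspondence-induced isomorphism of Remark \ref{later}, invoke Huybrechts' theorem on motives of isogenous K3 surfaces (which is \cite{Huy2}, not \cite{Huy}), and transfer back using Theorem \ref{verrak3} together with the matching Tate/algebraic summands. The one point where your justification is off: the reason the scalar ambiguity is harmless is not a general principle that rescaling the cup-product form preserves the isogeny class (a Hodge similitude of ratio $\lambda\neq 1$ need not be rescalable to an isometry), but rather that the constant in Remark \ref{later} is the \emph{same} for all Verra fourfolds, so the two scalars cancel in the composition and one obtains a genuine Hodge isometry $H^2_{tr}(S)\cong H^2_{tr}(S^\prime)$, which is what Huybrechts' theorem requires.
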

    
    \begin{proof} The assumption implies (actually, is equivalent to) the existence of a Hodge isometry 
      \[ H^4_{tr}(X)\ \cong\ H^4_{tr}(X^\prime)\ .\] 
      Let $S, S^\prime$ be two K3 surfaces associated to $X$ resp. $X^\prime$. Then, because the isomorphism $H^4_{tr}(X)\cong H^2_{tr}(S)(1)$ is compatible with Hodge structure and with cup product (up to some universal non-zero coefficient (cf. Remark \ref{later})), there is also a Hodge isometry 
    \[ H^2_{tr}(S)\ \cong\ H^2_{tr}(S^\prime)\ .\]
    The celebrated Huybrechts result \cite[Theorem 0.2]{Huy2} then gives the existence of an isomorphism of Chow motives $\hh(S)\cong \hh(S^\prime)$, and so in particular an isomorphism of Chow motives $\ttt^2(S)\cong \ttt^2(S^\prime)$. One concludes by applying Theorem \ref{verrak3}. 
      \end{proof}
   
   One has the following variant of Corollary \ref{isog}:
   
   \begin{corollary}\label{isogvar} Let $X$ be a Verra fourfold, let $T$ be a K3 surface and assume there exists a Hodge isometry $H^4_{tr}(X,\QQ)\cong H^2_{tr}(T,\QQ)(1)$ 
   Then there is an isomorphism of Chow motives
   \[ \ttt^4(X)\ \xrightarrow{\cong}\ \ttt^2(T)(1)\ \ \ \hbox{in}\ \MM_{\rm rat}\ .\]
   \end{corollary}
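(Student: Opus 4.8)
The plan is to route the comparison through an associated K3 surface of $X$ and then to invoke Huybrechts' theorem, exactly as in the proof of Corollary~\ref{isog}; the only difference is that one of the two K3 surfaces is now the given surface $T$ rather than a K3 surface arising from a second Verra fourfold.

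First I would fix an associated K3 surface $S$ of $X$ (Definition~\ref{assk3}) and apply Theorem~\ref{verrak3} to obtain an isomorphism of Chow motives $\ttt^4(X)\xrightarrow{\cong}\ttt^2(S)(1)$ in $\MM_{\rm rat}$. Passing to cohomological realizations and using Remark~\ref{later}, this isomorphism induces an isomorphism of rational Hodge structures $H^4_{tr}(X)\cong H^2_{tr}(S)(1)$ that respects the cup-product forms up to a non-zero coefficient. Composing the inverse of this cohomological isomorphism with the Hodge isometry $H^4_{tr}(X)\cong H^2_{tr}(T)(1)$ furnished by hypothesis yields an isomorphism of rational Hodge structures $H^2_{tr}(S)\cong H^2_{tr}(T)$, again respecting the intersection forms up to a non-zero coefficient. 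By Huybrechts' theorem \cite[Theorem 0.2]{Huy2}, two K3 surfaces whose rational transcendental lattices are Hodge isometric have isomorphic rational Chow motives; applying this to $S$ and $T$ gives $\hh(S)\cong\hh(T)$ in $\MM_{\rm rat}$, hence in particular an isomorphism $\ttt^2(S)\cong\ttt^2(T)$. Chaining $\ttt^4(X)\cong\ttt^2(S)(1)\cong\ttt^2(T)(1)$ then produces the desired isomorphism of Chow motives.

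The one point requiring care, which I expect to be the main obstacle, is the passage from a Hodge isometry \emph{up to a coefficient} to the genuine Hodge isometry of rational transcendental lattices needed to apply \cite{Huy2}. Tracing the two comparison maps shows directly that the composite $H^2_{tr}(S)\cong H^2_{tr}(T)$ is a Hodge similitude whose scaling factor is the universal constant $\nu$ of Proposition~\ref{k3}(iii). As in the proof of Corollary~\ref{isog} --- where this universal coefficient cancels between the two Verra fourfolds, producing an honest isometry --- one must check that such a similitude may legitimately be fed into Huybrechts' result (rescaling to an exact isometry when $\nu$ is a square, and otherwise appealing to the fact that the K3 isogeny relation is insensitive to rescaling the transcendental form). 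Once this scalar bookkeeping is settled, the statement follows at once from Theorem~\ref{verrak3} and \cite{Huy2}, with no further geometric input required.
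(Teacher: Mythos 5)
Your proposal is exactly the paper's proof: fix an associated K3 surface $S$, use Remark \ref{later} to turn the hypothesis into a Hodge isometry $H^2_{tr}(S)\cong H^2_{tr}(T)$, apply Huybrechts \cite{Huy2} to get $\ttt^2(S)\cong\ttt^2(T)$, and conclude via Theorem \ref{verrak3}. The scalar issue you flag (the composite is a priori only a similitude with factor the universal constant $\nu$, which cancels in Corollary \ref{isog} but not here) is a genuine subtlety that the paper silently elides by calling the composite an isometry outright, so your version is if anything the more careful one.
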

   
   \begin{proof} Let $S$ be a K3 surface associated to $X$. As above, the Abel--Jacobi isomorphism (cf. remark \ref{later}) gives a Hodge isometry
         \[ H^2_{tr}(S)\ \cong\ H^2_{tr}(T)\ .\] 
         Then Huybrechts' result \cite{Huy2} allows to conclude there is an isomorphism of Chow motives $\ttt^2(S)\cong \ttt^2(T)$. 
         Combined with the isomorphism of Theorem \ref{verrak3}, this proves the corollary.         
           \end{proof}   
   
   \begin{remark} The argument of Corollary \ref{isogvar} shows in particular the remarkable fact that any Hodge isometry between a Verra fourfold and a K3 surface is induced by an algebraic cycle. The same statement is known (but considerably more difficult to prove !) for integral Hodge isometries between cubic fourfolds and K3 surfaces \cite[Theorem 1.3]{AT}.
    \end{remark}

 \subsection{Decomposition in the derived category}
  For a smooth projective morphism $\pi\colon \XX\to B$, Deligne \cite{Del} has proven a decomposition in the derived category of sheaves of $\QQ$-vector spaces on $B$:
    \begin{equation}\label{del}  R\pi_\ast\QQ\cong \bigoplus_i  R^i \pi_\ast\QQ[-i]\ .\end{equation}
    As explained in \cite{Vdec}, for both sides of this isomorphism there is a cup product: on the right-hand side, this is the direct sum of the usual cup-products of local systems, while on the left-hand side, this is the derived cup product (inducing the usual cup product in cohomology). In general, the isomorphism (\ref{del}) is {\em not\/} compatible with these cup-products, even after shrinking the base $B$ (cf. \cite{Vdec}). In some rare cases, however, there is such a compatibility (after shrinking): this is the case for families of abelian varieties \cite{DM}, and for families of K3 surfaces \cite{Vdec} (cf. also \cite[Theorem 4.3]{V6} and \cite[Corollary 8.4]{FTV} for some further cases).
    
   Given the close link to K3 surfaces, it is not surprising that Verra fourfolds also have such a multiplicative decomposition: 
   
 \begin{corollary}\label{deldec} Let $\XX\to B$ be a family of Verra fourfolds. There is a non-empty Zariski open $B^\prime\subset B$, such that the isomorphism (\ref{del}) becomes multiplicative after shrinking to $B^\prime$.
 \end{corollary}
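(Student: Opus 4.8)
The plan is to deduce the multiplicativity from the analogous statement for families of K3 surfaces, which is a theorem of Voisin \cite{Vdec}, by transporting it along the universal correspondence of Theorem \ref{verrak3}. First I would shrink $B$ to a non-empty Zariski open $B^\prime$ over which an associated K3 surface stays smooth, so that the universal family $\varpi\colon\Ss\to B^\prime$ of degree $2$ K3 surfaces is available together with the relative correspondences $\Phi\in A^3(\XX\times_{B^\prime}\Ss)\oplus\bigoplus A^\ast(\XX)$ and $\Phi^{-1}$ of Theorem \ref{verrak3}. These exhibit the relative motive of $\XX$ over $B^\prime$ as a direct summand of $h(\Ss)(1)\oplus\bigoplus\mathds{1}(\ast)$, the summand being cut out by the idempotent $e:=\Phi\circ\Phi^{-1}$. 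The decisive refinement is Lemma \ref{pure0}, which says that $\Phi$, and hence $e$, is of pure grade $0$; thus $e$ commutes with the relative Chow--K\"unneth projectors supplied by Theorem \ref{verramck}, which are the projectors inducing the decomposition (\ref{del}) for $\XX\to B^\prime$.

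Passing to cohomology, $e$ induces in $D^b(B^\prime)$ an idempotent realizing $R\pi_\ast\QQ$ (with $\pi\colon\XX\to B^\prime$) as a direct summand of $R\varpi_\ast\QQ(1)\oplus\bigoplus\QQ_{B^\prime}[\ast]$. By Voisin \cite{Vdec}, after a further shrinking the decomposition (\ref{del}) for $\varpi$ is multiplicative; the constant summands and the Tate twist carry a trivially multiplicative structure, so the ambient object acquires a multiplicative decomposition. As $e$ is of pure grade $0$ it preserves each graded piece, so to conclude it suffices to know that the image of $e$ is closed under the derived cup product, i.e. that $e$ is a \emph{multiplicative} idempotent: granting this, the multiplicative decomposition of $R\varpi_\ast\QQ(1)\oplus\bigoplus\QQ_{B^\prime}[\ast]$ restricts to $R\pi_\ast\QQ$, which is exactly the assertion.

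The main obstacle is precisely this multiplicativity of $e$ at the level of the \emph{derived} cup product, and not merely fibrewise. The fibrewise multiplicativity built into Theorem \ref{verramck} controls only the degree-zero part of the defect $\pi^k_\XX\circ\Delta_\XX^{sm}\circ(\pi^i_\XX\otimes\pi^j_\XX)$ for $i+j\neq k$; as Voisin emphasises in \cite{Vdec}, the genuine obstruction to multiplicativity lives in the higher $\mathrm{Ext}$-groups between the local systems $R^i\varpi_\ast\QQ$, and it is on the K3 side that these are killed. To carry the multiplicativity of $e$ across I would express the failure of $\operatorname{im}(e)$ to be product-closed as a relation between relative cycles on the triple fibre product $\Ss^{3/{B^\prime}}$; this relation is fibrewise homologically trivial because $e$ is grade $0$ and the ambient decomposition is multiplicative, so the generalized Franchetta theorem (Theorem \ref{gfc}, with $m=3$) upgrades it to fibrewise rational triviality, and the extension principle \cite[Lemma 3.2]{Vo} then spreads the resulting compatibility over all of $B^\prime$. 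This mirrors, and builds on, the argument already used to prove Theorem \ref{verramck}; a last shrinking of the base absorbs the indeterminacy in the non-canonical choice of decomposition (\ref{del}).
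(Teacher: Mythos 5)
Your route is genuinely different from the paper's, and it has a gap at its central step. You propose to realize $R\pi_\ast\QQ$ as a direct summand of $R\varpi_\ast\QQ(1)\oplus\bigoplus\QQ_{B'}[\ast]$ via $\Phi$ and then to restrict Voisin's multiplicative decomposition for the K3 family to that summand. But multiplicativity is a property of the pair (object, derived cup product): the cup product on $R\pi_\ast\QQ$ is induced by the small diagonal of $\XX$, not by the small diagonal of $\Ss$, and the correspondence $\Phi$ of Theorem \ref{verrak3} is an isomorphism of motives but not of algebra objects. Already fibrewise, the isomorphism $\hh(X_b)\cong \hh(S_b)(1)\oplus\bigoplus\mathds{1}(\ast)$ does not respect cup products: the forms on the transcendental parts agree only up to the nonzero scalar $\nu$ of Proposition \ref{k3}(iii), and products such as $H^2(X_b)\otimes H^2(X_b)\to H^4(X_b)$ or $H^2(X_b)\otimes H^4_{tr}(X_b)\to H^6(X_b)$ have no counterpart among products of the Lefschetz summands on the right-hand side (note also that, because of the Tate twists and shifts, the ambient object carries no canonical algebra structure in the first place). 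So even granting that $e$ is ``multiplicative'' and that the K3-side decomposition restricts to $\operatorname{im}(e)$, what you obtain is a decomposition multiplicative for the \emph{transported} product, which is not the derived cup product of $R\pi_\ast\QQ$. Lemma \ref{pure0} does not repair this: purity of grade is weaker than compatibility with the multiplication maps.

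The paper's proof avoids the issue entirely. Theorem \ref{verramck} already establishes a \emph{relative} MCK decomposition, i.e.\ relative projectors $\pi^j_\XX$ with $\Gamma_{ijk}\vert_{(X_b)^3}=0$ in $A^8\bigl((X_b)^3\bigr)$ for all $b\in B$ and all $i\neq j+k$, and the corollary is a formal consequence of this (cf.\ \cite[Proof of Theorem 4.2]{V6} and \cite[Section 8]{FTV}): the classes of the $\pi^j_\XX$ induce the decomposition (\ref{del}), its multiplicativity defect is measured by the cohomology classes of the algebraic cycles $\Gamma_{ijk}$ on $\XX^{3/B}$, and an algebraic class that vanishes on every fibre dies on the total space after shrinking $B$ --- this is precisely how the ``higher Ext'' obstruction you worry about is killed, with no further reference to the K3 family at this stage. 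Your final paragraph, which pushes an obstruction cycle to $\Ss^{3/B'}$ and applies Theorem \ref{gfc} with $m=3$, is essentially a re-run of the proof of Theorem \ref{verramck}, but aimed at the wrong relation (closure of $\operatorname{im}(e)$ under the ambient product rather than the vanishing of $\pi^k_\XX\circ\Delta^{sm}_\XX\circ(\pi^i_\XX\otimes\pi^j_\XX)$); redirected at the latter it recovers Theorem \ref{verramck}, after which one still needs the formal bridge from a relative MCK decomposition to multiplicativity of (\ref{del}).
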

 
 \begin{proof} This is a formal consequence of the existence of a relative MCK decomposition, cf. \cite[Proof of Theorem 4.2]{V6} and \cite[Section 8]{FTV}.
 \end{proof}
   
  This has the following concrete consequence, which is similar to a result for families of $K3$ surfaces obtained by Voisin \cite[Proposition 0.9]{Vdec}:
  
  \begin{corollary}\label{deldec2} Let $\XX\to B$ be a family of Verra fourfolds.   
 Let $z\in A^r(\XX^{m/B})$ be a polynomial in (pullbacks of) divisors and codimension $2$ cycles on $\XX$.  
  Assume the fibrewise restriction $z\vert_b$ is homologically trivial, for some $b\in B$. Then there exists a non-empty Zariski open $B^\prime\subset B$
  such that
    \[ z=0\ \ \ \hbox{in}\ H^{2r}\bigl((\XX^\prime)^{m/B^\prime},\QQ\bigr)\ .\]
    \end{corollary}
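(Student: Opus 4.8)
The plan is to adapt Voisin's proof of \cite[Proposition 0.9]{Vdec}, taking the multiplicative decomposition supplied by Corollary \ref{deldec} as the essential input. Write $\pi\colon \XX^{m/B}\to B$ for the $m$-fold relative self-product, and recall that since the odd cohomology of a Verra fourfold vanishes (Corollary \ref{hodgediam}) one has $R^{odd}\pi_\ast\QQ=0$. First I would isolate the \emph{top Leray component} of $z$. The Leray filtration $L^\bullet$ on $H^{2r}(\XX^{m/B},\QQ)$ is defined over all of $B$, and its top graded quotient is canonically $\mathrm{Gr}^0_L H^{2r}=H^0(B,R^{2r}\pi_\ast\QQ)$, the projection being exactly the collection of fibrewise restrictions. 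Thus the image $z_0$ of $z$ in $H^0(B,R^{2r}\pi_\ast\QQ)$ is a \emph{global section of a local system} on the connected base $B$, i.e. a flat (monodromy-invariant) section, whose value at a point $b$ is $z\vert_b\in H^{2r}(X_b^m)$.

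The hypothesis $z\vert_b=0$ for some $b$ then forces $z_0=0$, since a flat section of a local system over a connected base that vanishes at a single point vanishes identically. Consequently $z$ lies in the first Leray level $L^1 H^{2r}(\XX^{m/B})$; equivalently $z\vert_b=0$ in $H^{2r}(X_b^m)$ for \emph{every} $b\in B$ and not merely the given one, because $z_0$ records all the fibrewise classes simultaneously.

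It remains to promote ``fibrewise homologically trivial for all $b$'' to ``homologically trivial on the total space after shrinking''. This is false for an arbitrary fibrewise-trivial class (for instance $\pi^\ast(\beta)\cup w$ with $\beta\in H^{>0}(B)$ survives all shrinkings when $\beta$ is transcendental), and it is precisely here that Corollary \ref{deldec} intervenes. Over a suitable dense open $B'\subseteq B$ the decomposition (\ref{del}) is multiplicative, so taking hypercohomology yields a ring isomorphism $H^\ast(\XX^{m/B'},\QQ)\cong\bigoplus_{p,q}H^p(B',R^q\pi_\ast\QQ)$. Moreover this splitting is induced by the \emph{algebraic} relative projectors underlying the relative MCK decomposition (this is how Corollary \ref{deldec} is produced, cf. \cite[Proof of Theorem 4.2]{V6}). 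Since $z$ is a polynomial in divisor and codimension-$2$ classes on $\XX$, each of its Leray-graded components $z_p\in H^p(B',R^{2r-p}\pi_\ast\QQ)$ is again represented by an algebraic cycle on $\XX^{m/B'}$; by the previous step $z_0=0$, so $z$ is an algebraic class lying in strictly positive Leray level.

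The main obstacle will be the final step: showing that every positive-level algebraic contribution is \emph{supported over a proper closed subset of the base}, hence killed by removing finitely many such subsets. Concretely, the positive-level components are built from the base-cohomology parts of the generators: for a divisor $D$ the part $D^2\in H^2(B')$ is a $(1,1)$-class, hence algebraic by Lefschetz and equal to a pullback $\pi^\ast[Z_D]$ with $Z_D\subsetneq B'$ a divisor; for a codimension-$2$ class $W$ the parts in $H^2(B',R^2)$ and $H^4(B')$ must, by the algebraicity of the decomposition, likewise be represented by relative cycles supported over proper closed subsets. Granting this, I would set $B''$ to be the complement in $B'$ of the finite union of these supports; then every positive-level component of $z$ restricts to $0$ over $B''$ while $z_0=0$ already, so $z=0$ in $H^{2r}(\XX^{m/B''},\QQ)$, which is the advertised conclusion. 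The delicate part, to be carried out exactly as in \cite[Proposition 0.9]{Vdec} using the extension principle for families of cycles \cite[Lemma 3.2]{Vo}, \cite[Proposition 2.4]{V11}, is verifying that the codimension-$2$ generators contribute only such base-supported terms; this is where multiplicativity is indispensable, since for a merely additive decomposition the cross-terms between the fibrewise and base directions need not be algebraically supported.
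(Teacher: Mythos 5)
Your proposal is correct and follows essentially the same route as the paper: decompose the divisor and codimension-$2$ generators into their Leray components, use multiplicativity of the decomposition of Corollary \ref{deldec} to show the base-direction components are algebraic (hence supported over proper closed subsets of $B$ that can be removed by shrinking), and conclude by the flat-section argument that the remaining component in $H^0(B^\prime,R^{2r}(\pi^m)_\ast\QQ)$ vanishes identically once it vanishes at one point. The only step you defer to \cite[Proposition 0.9]{Vdec} --- algebraicity of the base components of the codimension-$2$ generators --- is precisely the part the paper writes out explicitly, via the push-forwards $\pi_\ast\bigl(e_j(h_0)^4\bigr)$ and $\pi_\ast\bigl(e_j(h_0)^3\bigr)$ combined with multiplicativity.
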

    
    \begin{proof} The argument is the same as \cite[Proposition 0.9]{Vdec}. First, one observes that divisors $d_i$ and codimension $2$ cycles $e_j$ on $\XX$ admit a cohomological decomposition (with respect to the Leray spectral sequence)
        \[ \begin{split}  d_i&= d_{i0} + \pi^\ast(d_{i2})\ \ \ \hbox{in}\ H^0(B, R^2\pi_\ast\QQ)\oplus \pi^\ast H^2(B,\QQ) \cong H^2(\XX,\QQ)\ ,\\
                                e_j&= e_{j0} +\pi^\ast(e_{j2}) +\pi^\ast(e_{j4})\ \ \ \hbox{in}\  H^0(B, R^4\pi_\ast\QQ)\oplus \pi^\ast H^2(B)^{\oplus 2}  \oplus \pi^\ast H^4(B) \cong H^4(\XX,\QQ)\ .\\  
                        \end{split}\]
      We claim that the cohomology classes $d_{ik}$ and $e_{jk}$ are {\em algebraic\/}. This claim implies the corollary: indeed, given a polynomial
      $z=p(d_i,e_j)$, one may take $B^\prime$ to be the complement of the support of 
      the cycles $d_{i2}$, $e_{j2}$ and $e_{j4}$. Then over the restricted base $B^\prime$ one has equality
      \[  z:=p(d_i,e_j)= p(d_{i0},e_{j0})\ \ \ \hbox{in}\ H^{2r}\bigl((\XX^\prime)^{m/B^\prime},\QQ\bigr)\ .\]
      Multiplicativity of the decomposition ensures that (after shrinking the base some more)
      \[  p(d_{i0},e_{j0})\ \ \in\ H^0(B^\prime, R^{2r}(\pi^m)_\ast\QQ)\ \ \subset\ H^{2r}\bigl((\XX^\prime)^{m/B^\prime},\QQ\bigr)\ ,\]
      and so the conclusion follows.
      
      The claim is proven for divisor classes $d_i$ in \cite[Lemma 1.4]{Vdec}. For codimension $2$ classes $e_j$, the argument is similar to loc. cit.:
      let $h\in H^2(\XX)$ be an ample divisor class, and let $h_0$ be the part that lives in $H^0(B,R^2\pi_\ast\QQ)$. One has
      \[ e_j (h_0)^4 =  e_{j0}  (h_0)^4 +\pi^\ast(e_{j2})  (h_0)^4+\pi^\ast(e_{j4})  (h_0)^4\ \ \ \hbox{in}\ H^{12}(\XX,\QQ)\ .\]
      By multiplicativity, after some shrinking of the base the first two summands are contained in $H^0(B^\prime, R^{12}\pi_\ast\QQ)$, resp. in $H^2(B^\prime, R^{10}\pi_\ast\QQ)$, hence they are zero as $\pi$ has $4$-dimensional fibres. The above equality thus simplifies to
      \[ e_j (h_0)^4 =    \pi^\ast(e_{j4})  (h_0)^4\ \ \ \hbox{in}\ H^{12}(\XX,\QQ)\ .\]
      Pushing forward to $B^\prime$, one obtains
      \[ \pi_\ast (e_j (h_0)^4)= \pi_\ast \bigl((h_0)^4\bigr) e_{j4} = \lambda\, e_{j4}\ \ \ \hbox{in}\ H^4(B^\prime)\ ,\]
      for some $\lambda\in\QQ^\ast$. As the left-hand side is algebraic, so is $e_{j4}$.
      
      Next, one considers
       \[  e_j (h_0)^3 =  e_{j0}  (h_0)^3 +\pi^\ast(e_{j2})  (h_0)^3+\pi^\ast(e_{j4})  (h_0)^3\ \ \ \hbox{in}\ H^{10}(\XX,\QQ)\ .\]      
       The first summand is again zero for dimension reasons, and so
       \[ \pi^\ast(e_{j2})  (h_0)^3 =    e_j (h_0)^3 -  \pi^\ast(e_{j4})  (h_0)^3\ \ \in\ H^{10}(\XX,\QQ)\ \]
       is algebraic. A fortiori, $\pi^\ast(e_{j2})  (h_0)^4$ is algebraic, and so
       \[ \pi_\ast ( \pi^\ast (e_{j2})  (h_0)^4) =    \pi_\ast \bigl((h_0)^4\bigr) e_{j2} = \mu\, e_{j2}\ \ \ \hbox{in}\ H^2(B^\prime)\ ,  \ \ \ \mu\in\QQ^\ast\ ,\]          
       is algebraic.
      \end{proof}
 
  \subsection{Finite-dimensionality, generalized Hodge conjecture}
  
  \begin{corollary}\label{other} Let $X$ be a Verra fourfold, and assume $\dim H^{2,2}(X,\QQ)\ge 20$.
  
  \begin{enumerate}[(i)]

\item
   $X$ has finite-dimensional motive, in the sense of Kimura \cite{Kim}.
  
 \item 
   The generalized Hodge conjecture is true for $X^m$, $m\in\NN$.
    \end{enumerate}
     \end{corollary}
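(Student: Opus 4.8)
The plan is to deduce both statements from the motivic relation $\ttt^4(X)\cong \ttt^2(S)(1)$ of Theorem \ref{verrak3}, after first translating the numerical hypothesis into a constraint on the Picard number of the associated K3 surface $S$. By the Hodge diamond (Corollary \ref{hodgediam}) one has $\dim H^4(X,\QQ)=23$ with $h^{3,1}(X)=h^{1,3}(X)=1$, and the space of rational Hodge classes $H^{2,2}(X,\QQ)$ is exactly the orthogonal complement of $H^4_{tr}(X)$; hence $\dim H^{2,2}(X,\QQ)=23-\dim H^4_{tr}(X)$. The hypothesis $\dim H^{2,2}(X,\QQ)\ge 20$ therefore forces $\dim H^4_{tr}(X)\le 3$, and since $H^4_{tr}(X)\supset H^{3,1}(X)\oplus H^{1,3}(X)$ is at least $2$-dimensional, we get $\dim H^4_{tr}(X)\in\{2,3\}$. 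Via the Hodge isometry $H^4_{tr}(X)\cong H^2_{tr}(S)(1)$ (Remark \ref{later}) this reads $\dim H^2_{tr}(S)\le 3$, i.e. $\rho(S)\ge 19$.

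For (i), I would observe that in the decomposition $\hh(X)=\ttt^4(X)\oplus \hh^4_{alg}(X)\oplus\bigoplus_{0\le j\le 4,\,j\ne 2}\mathds{1}(j)$ all summands except $\ttt^4(X)$ are direct sums of Tate motives, hence finite-dimensional; so $\hh(X)$ is finite-dimensional if and only if $\ttt^4(X)\cong\ttt^2(S)(1)$ is, i.e. if and only if $\hh(S)$ is finite-dimensional. It then suffices to invoke the known fact that a K3 surface $S$ with $\rho(S)\ge 19$ has finite-dimensional motive: such an $S$ carries a Shioda--Inose structure (Morrison), relating it through a Kummer surface to an abelian surface $A$ together with a Hodge isometry $H^2_{tr}(S)\cong H^2_{tr}(A)$. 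Huybrechts' theorem \cite{Huy2} (applied to $S$ and the Kummer surface) together with the classical isomorphism $\ttt^2(\mathrm{Km}(A))\cong\ttt^2(A)$ yields $\ttt^2(S)\cong\ttt^2(A)$, and $\hh(A)$ is finite-dimensional by Kimura and Shermenev \cite{Kim}. This gives finite-dimensionality of $\hh(X)$.

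For (ii), the strategy is to propagate the generalized Hodge conjecture along the same chain of motives. Expanding $\hh(X^m)=\hh(X)^{\otimes m}$ by means of the decomposition above, $H^\ast(X^m,\QQ)$ becomes a direct sum of Tate twists of tensor powers $\bigl(H^4_{tr}(X)\bigr)^{\otimes a}\cong\bigl(H^2_{tr}(S)\bigr)^{\otimes a}(a)$; the generalized Hodge conjecture is trivial on the Tate summands, so it reduces to the statement for sub-Hodge-structures of tensor powers of $H^2_{tr}(S)$, and hence (these occurring inside $H^{2a}(S^a)$) to the generalized Hodge conjecture for all self-products $S^m$. Using $\ttt^2(S)\cong\ttt^2(A)$ once more, this reduces in turn to the generalized Hodge conjecture for all powers $A^m$ of the Shioda--Inose abelian surface. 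Since $\dim H^2_{tr}(A)\le 3$, the surface $A$ is isogenous either to a product of (CM or non-CM) elliptic curves or to a simple abelian surface with quaternionic multiplication, and the generalized Hodge conjecture for self-products of abelian varieties of these types is available (here I would lean on the work of Abdulali on the general Hodge conjecture for abelian varieties); translating back along the isomorphisms yields the generalized Hodge conjecture for $X^m$.

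The main obstacle is this last step. Finite-dimensionality is a soft consequence of Theorem \ref{verrak3}, but the generalized Hodge conjecture is a statement about Hodge structures that cannot be extracted from Chow-theoretic finite-dimensionality alone: it has to be imported from the abelian surface $A$. The delicate points are therefore (a) checking that the motivic decomposition of $\hh(X^m)$ genuinely reduces the generalized Hodge conjecture for $X^m$ to that for $A^m$, which requires the inducing correspondences to respect the coniveau and Hodge filtrations, and (b) having the generalized Hodge conjecture in hand for the relevant abelian surfaces, in particular in the quaternionic case that arises when $\rho(S)=19$.
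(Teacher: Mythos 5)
Your argument follows the paper's proof essentially verbatim: the hypothesis is translated into $\dim H^4_{tr}(X)\le 3$, hence $\rho(S)\ge 19$ for the associated K3 surface, and both statements are imported from the Shioda--Inose abelian surface via the motivic isomorphism of Theorem \ref{verrak3} (the paper cites Pedrini \cite{Ped} for the finite-dimensionality of such K3 surfaces). The two difficulties you flag at the end are exactly what the paper's citations are for: your point (a) is handled by Arapura's formalism of motivated cycles (the paper observes that $X$ is \emph{motivated} by an abelian surface in the sense of \cite{Ara}, which is precisely the statement that the reduction along correspondences respects the coniveau filtration), and your point (b) by Abdulali's theorems \cite{Ab}, \cite{Ab2}, which give the generalized Hodge conjecture for self-products of arbitrary abelian surfaces, including the quaternionic case.
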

  
  \begin{proof} The assumption on $X$ is equivalent to $\dim H^4_{tr}(X)\le 3$. This means that any of the two associated K3 surfaces $S$ has $\dim H^2_{tr}(S)\le 3$, and so
  the Picard number of $S$ is $\ge 19$. Since K3 surfaces of Picard number $19$ or $20$ are either Kummer surfaces, or are related to Kummer surfaces via a Shioda--Inose structure, they have finite-dimensional motive \cite{Ped}. In view of Theorem \ref{verrak3}, this implies that $X$ also has finite-dimensional motive.
  
  As for (\rom2), the above can be restated as saying that $X$ is {\em motivated\/} by an abelian surface, in the sense of \cite{Ara}. The generalized Hodge conjecture is known for self-products of abelian surfaces \cite[7.2.2]{Ab}, \cite[8.1(2)]{Ab2}. This implies (via the argument of \cite{Ara}) the generalized Hodge conjecture for self-products of $X$.
   \end{proof}
   
  \begin{remark} It is worth pointing out that for Verra fourfolds as in Corollary \ref{other}, the associated double EPW quartic $Z$ also has finite-dimensional motive. This follows from 
  \cite[Theorem 0.1]{Bul}, in view of the fact that $Z$ is a moduli space of twisted stable sheaves on a K3 surface $S$ with $\rho(S)\ge 19$. 
  This argument actually shows that if the Verra fourfold has finite-dimensional motive, then also the associated double EPW quartic has finite-dimensional motive. In order to explain this ``coincidence'', it would be interesting to find a motivic relation between $X$ and $Z$, similar to the relation between a cubic fourfold and its Fano variety of lines \cite{22}.
   \end{remark}

\vskip1cm
\begin{nonumberingt} Thanks to a referee for helpful comments. Many thanks to papa and Ute for hospitably receiving me in sunny Breskens (June 2018), where this note was conceived. Many thanks to Jiji and Baba for their hospitality in mushiatsui Kokury\={o} (August 2018), where this note was completed.
\end{nonumberingt}

\vskip1cm

\end{document}